\newtheorem{theorem}{Theorem}[section]
\newtheorem{lemma}[theorem]{Lemma}
\newtheorem{corollary}[theorem]{Corollary}
\newtheorem{proposition}[theorem]{Proposition}
\newtheorem{conjecture}{Conjecture}
\newtheorem*{conjecture1}{Conjecture 1}
\newtheorem*{conjecture2}{Conjecture 2}
\newtheorem*{theoremStrongCliques}{Theorem~\ref{thm:strong-cliques-hardness}}
\newtheorem*{theoremHardness}{Theorem~\ref{thm:hard}}
\newtheorem*{theoremLine}{Theorem~\ref{thm:line}}
\theoremstyle{definition}
\newtheorem{definition}[theorem]{Definition}
\newcommand{\btimes}{\mathbin{\text{\rotatebox[origin=c]{45}{$\boxtimes$}}}}
\newcommand{\ddiamond}{\mathbin{\text{\rotatebox[origin=c]{90}{$\triangleleft\!\!\:\triangleright$}}}}
\newcommand{\Ptriangle}{{\cal T}_{\!\mathsmaller{\triangle}}}
\newcommand{\Pdiamond}{{\cal T}_{\ddiamond}}
\newcommand{\PK}{{\cal T}_{\mathsmaller{\btimes}}}
\newcommand{\Rt}{R_{\!\mathsmaller{\triangle}}}
\newcommand{\Rd}{R_{\ddiamond}}
\newcommand{\RK}{R_{\mathsmaller{\btimes}}}
\begin{document}

\title{Graphs vertex-partitionable into strong cliques}

\author[1,2]{Ademir Hujdurovi\'c\thanks{ademir.hujdurovic@upr.si}}
\author[1,2]{Martin Milani\v c\thanks{martin.milanic@upr.si}}
\author[3]{Bernard Ries\thanks{bernard.ries@unifr.ch}}
\affil[1]{\normalsize University of Primorska, UP IAM, Muzejski trg 2, SI-6000 Koper, Slovenia}
\affil[2]{\normalsize University of Primorska, UP FAMNIT, Glagolja\v ska 8, SI-6000 Koper, Slovenia}
\affil[3]{\normalsize University of Fribourg, Department of Informatics, Bd de P\'erolles 90, CH-1700 Fribourg, Switzerland}

\date{\today}

\maketitle
\begin{abstract}
A graph is said to be {\it well-covered} if all its maximal independent sets are of the same size. In 1999, Yamashita and Kameda introduced a subclass of well-covered graphs, called {\it localizable graphs} and defined as graphs having a partition of the vertex set into strong cliques, where a clique in a graph is {\it strong} if it intersects all maximal independent sets. Yamashita and Kameda observed that all well-covered trees are localizable, pointed out that the converse inclusion fails in general, and asked for a characterization of localizable graphs.

In this paper we obtain several structural and algorithmic results about localizable graphs. Our results include a proof of the fact that every very well-covered graph is localizable and characterizations of localizable graphs within the classes of line graphs, triangle-free graphs, $C_4$-free graphs, and cubic graphs, each leading to a polynomial time recognition algorithm. On the negative side, we prove NP-hardness of recognizing localizable graphs within the classes of weakly chordal graphs, complements of line graphs, and graphs of independence number three. Furthermore, using localizable graphs we disprove a conjecture due to Zaare-Nahandi about $k$-partite well-covered graphs having all maximal cliques of size $k$. Our results unify and generalize several results from the literature.
\end{abstract}

\noindent
{\bf Keywords:} strong clique, well-covered graph, very well-covered graph, localizable graph

\section{Introduction}

A {\em clique} (resp., {\em independent set}) in a graph is a set of pairwise adjacent (resp., pairwise non-adjacent) vertices.
A clique (resp.,~independent set) in a graph is said to be {\em maximal} if it is not contained in any larger clique (resp., independent set),
and {\em strong} if it intersects every maximal independent set (resp.,~every maximal clique).
A graph is said to be:
\begin{itemize}
  \item {\em well-covered} if all its maximal independent sets have the same size,
  \item {\em co-well-covered} if its complement is well-covered,
  \item {\em localizable} if it admits a partition of its vertex set into strong cliques.
\end{itemize}

The importance of the class of well-covered graphs is partly due to the fact that the maximum independent set problem, which is generally NP-complete, can be solved in polynomial time in the class of well-covered graphs by a greedy algorithm. {Well-covered graphs are also related to the Generalized Kayles game, a two-person game played on a graph, in which two players alternate removing a vertex and all its neighbors and the player who last removes a vertex wins, see~\cite{MR737090}. For well-covered graphs (and, more generally, for parity graphs~\cite{MR1352783}), the outcome of the game is independent of how the players move. Well-covered graphs also play an important role in commutative algebra, where they are typically referred to as {\it unmixed} graphs, see, e.g.,~\cite{MR1031197,MR2585907,MR2231097,MR3177496,MR3272076,MR2932582,MR3339410}. The well-coveredness property of a graph is equivalent to
the property that the simplicial complex of the independent sets of $G$ is {\it pure} and generalizes the algebraically defined concept of
a Cohen-Macaulay graph (see, e.g.,~\cite{MR3512661}). Fur further background on well-covered graphs, we refer to the surveys by Plummer~\cite{MR1254158} and Hartnell~\cite{MR1677797}.

Localizable graphs form a subclass of well-covered graphs. They were introduced by Yamashita and Kameda in 1999, in the concluding remarks of their paper~\cite{MR1715546}. This property appeared implicitly in other places in the literature, for example in~\cite[Proposition 1]{MR737090} and in~\cite[Theorem 2.1]{MR3356635}. In their 1983 paper~\cite{MR737090}, Finbow and Hartnell proved that every well-covered graph of girth at least $8$ has a perfect matching formed by pendant edges. Since every pendant edge is a strong clique, this implies that every well-covered graph of girth at least $8$ is localizable.} In~1999~\cite{MR1715546}, Yamashita and Kameda observed that all well-covered trees are localizable, pointed out that the converse inclusion fails in general, and asked for a characterization of localizable graphs.

Motivated by this question, we initiate in this work the study of localizable graphs.
Our results can be divided into three main parts:
\begin{enumerate}[1)]
  \item hardness results related to the problem of recognizing localizable graphs,
  \item characterizations of localizable graphs within the classes of line graphs, triangle-free graphs, $C_4$-free graphs, and cubic graphs, and
  \item counterexamples to a conjecture by Zaare-Nahandi about $k$-partite well-covered graphs having all maximal cliques of size $k$.
\end{enumerate}

\medskip
We summarize our results, approach, and related work as follows.
\bigskip

\noindent{\bf 1. Hardness results.}

\medskip
\noindent We show that recognizing localizable graphs is hard even for rather restricted graph classes:

\begin{theorem}\label{thm:hard}
The problem of recognizing localizable graphs is:
\begin{itemize}
  \item co-NP-complete in the class of weakly chordal graphs (and consequently in the class of perfect graphs),
  \item NP-complete in the class of complements of line graphs of triangle-free $k$-regular graphs, for every fixed $k\ge 3$,
  \item NP-complete in the class of graphs of independence number $k$, for every fixed $k\ge 3$.
\end{itemize}
\end{theorem}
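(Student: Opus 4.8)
The plan is to handle each of the three hardness claims by a separate reduction, and in each case the heart of the matter is a \emph{gadget} that forces a specific combinatorial structure into any vertex-partition into strong cliques. A common starting observation that I would establish first is a local criterion: a clique $C$ in $G$ is strong if and only if every maximal independent set of $G$ meets $C$, which (for the graphs we build) can be reformulated as a covering/domination-type condition that is easy to check on the gadget level. I would also record the elementary fact that if $G$ is localizable then every vertex lies in some strong clique, and that a single vertex is a strong clique exactly when it belongs to every maximal independent set, i.e.\ when it is simplicial with a dominating closed neighborhood in a suitable sense; simplicial vertices and pendant edges will be the ``free'' building blocks.

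For the first item (co-NP-completeness in weakly chordal, hence perfect, graphs), I would reduce from the complement of a known NP-complete problem whose structure matches strong cliques well --- a natural candidate is (a restriction of) \textsc{3-SAT} or \textsc{Hamiltonian path}/\textsc{perfect matching-type} problems, but more promisingly, since strong cliques in perfect graphs are tied to covering all maximal independent sets, I would reduce from the problem of deciding whether a given graph has an \emph{independent dominating set of a prescribed size}, or from a hypergraph covering problem, and then take complements. The key step is to design weakly chordal gadgets (for instance built from complete bipartite pieces and pendant structures, which are known to preserve weak chordality) so that a valid partition into strong cliques exists if and only if the original instance is a \emph{no}-instance. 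Verifying weak chordality of the constructed graph --- i.e.\ the absence of induced $C_k$ and $\overline{C_k}$ for $k\ge 5$ --- is likely to be the fussiest part, and I would do it by showing the graph is assembled from weakly chordal blocks glued along cliques or simplicial vertices.

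For the second item (complements of line graphs of triangle-free $k$-regular graphs), I would work in the line-graph world via the duality between cliques in $L(H)$ and stars/edges of $H$, and between independent sets in $\overline{L(H)}$ and matchings: a partition of $V(\overline{L(H)})$ into strong cliques translates into a partition of $E(H)$ into sets each of which is ``strong'' with respect to maximal matchings of $H$. Since $H$ is triangle-free, cliques in $L(H)$ are exactly stars (sets of edges through a common vertex) together with triangles' worth of edges which cannot occur, so the partition becomes a partition of $E(H)$ into stars --- equivalently an orientation-type or \textsc{dominating-edge-set} structure on $H$ --- and I would engineer the reduction (from, say, a constrained version of \textsc{SAT} or from \textsc{perfect matching existence in subcubic graphs}, choosing a problem that is NP-hard already for triangle-free $k$-regular graphs) so that such a partition exists iff the SAT instance is satisfiable. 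Lifting a triangle-free base graph to be exactly $k$-regular while keeping girth large is a standard padding step (attach gadgets or use known constructions of $k$-regular triangle-free graphs with prescribed local structure), but checking that regularity padding does not create spurious strong-clique partitions is the delicate point.

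For the third item (graphs of independence number $k$ for fixed $k\ge 3$), the bound $\alpha(G)\le k$ means every maximal independent set has at most $k$ vertices, so a clique is strong precisely when it hits every maximal independent set, and a partition into strong cliques has a very rigid shape --- at most $k$ ``types'' of behavior against the $\le k$-element maximal independent sets. I would reduce from an NP-complete problem on $k$-partite or $k$-colorable structures, most naturally from \textsc{$k$-colorability of the complement} or from \textsc{partition into $k$ cliques}-type problems (which are NP-hard for fixed $k\ge 3$ in the complement, i.e.\ from \textsc{$3$-coloring}), building $G$ as (roughly) the complement of a graph whose $3$-colorings correspond to the desired strong-clique partitions; the independence number is kept at exactly $k$ by the complement of $k$-partiteness. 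The main obstacle here, as throughout, is ensuring that the \emph{strong} requirement (hitting \emph{all} maximal independent sets, including unintended small ones created by the gadgets) coincides exactly with the intended constraint --- I would control this by adding, to each gadget, a small number of ``guard'' vertices (e.g.\ a dominating pendant vertex or a universal vertex restricted to the gadget) that force the maximal independent sets to have precisely the anticipated form, so that no accidental strong clique partition can arise on a \emph{no}-instance.
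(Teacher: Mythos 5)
Your proposal misses the single idea that makes all three items of Theorem~\ref{thm:hard} nearly immediate in the paper, namely the equivalence of Proposition~\ref{prop:alpha-chi-bar}: $G$ is localizable if and only if $G$ is well-covered \emph{and} semi-perfect (equivalently $i(G)=\theta(G)$). With that in hand, the weakly chordal case needs no new gadgets at all: weakly chordal graphs are perfect, hence semi-perfect, so within this class localizable coincides with well-covered (Corollary~\ref{cor:perfect}), and the co-NP-completeness is inherited verbatim from the known hardness of recognizing well-covered weakly chordal graphs (Theorem~\ref{thm:wc-rec-NP-hard}). Your plan instead calls for a fresh reduction from an unspecified problem with hand-built weakly chordal gadgets whose weak chordality would have to be verified; none of this is carried out, and it also leaves co-NP \emph{membership} unaddressed --- membership comes exactly from the equivalence with well-coveredness (a no-certificate is a pair of maximal independent sets of different sizes), which your sketch never provides a substitute for.

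The other two items contain concrete errors and gaps as well. For complements of line graphs your dictionary is reversed: independent sets of $\overline{L(H)}$ are cliques of $L(H)$, i.e.\ stars of $H$ (not matchings), while cliques of $\overline{L(H)}$ are matchings of $H$; hence a strong clique of $\overline{L(H)}$ is a matching meeting every star, i.e.\ a perfect matching of $H$, and a partition of $V(\overline{L(H)})$ into strong cliques is precisely a partition of $E(H)$ into perfect matchings, i.e.\ a proper $k$-edge-colouring when $H$ is $k$-regular (Lemma~\ref{lem:complements-of-line-graphs}; well-coveredness with $\alpha=k$ is automatic here, so only semi-perfection is at stake). Your ``partition of $E(H)$ into stars'' describes localizability of $L(H)$, not of its complement, and your candidate source problems do not work: perfect matching existence is polynomially solvable, whereas the correct source is $k$-edge-colourability of triangle-free $k$-regular graphs, NP-complete by Cai and Ellis, which yields Theorem~\ref{thm:complements-of-line-of-triangle-free} with essentially no gadget engineering. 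For independence number $k$ your idea (complement of a padded $k$-colourability instance) points in the paper's direction (Theorem~\ref{prop:k-localizable} and Corollary~\ref{cor:alpha}), but the decisive construction is missing: assuming $\omega(G)\le k$ (so, for fixed $k$, only polynomially many maximal cliques), one attaches to every maximal clique $C$ a clique of $k-|C|$ new vertices joined to all of $C$, so that all maximal cliques of the resulting graph $G'$ have size exactly $k$; this is what makes $\overline{G'}$ automatically well-covered with $\alpha=k$ and makes every partition into $k$ cliques automatically consist of strong cliques, rendering your ad hoc ``guard vertices'' unnecessary and, as described, unverified. You also omit NP membership, which for fixed $k$ follows by enumerating the $O(|V(G)|^k)$ independent sets. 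As written, the proposal is a plan rather than a proof, and the two mistranslations above would defeat it if executed literally.
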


We obtain these hardness results by employing two different techniques, both based on the fact that a graph is localizable if and only if it is well-covered and semi-perfect. Following Zaare-Nahandi~\cite{MR3356635}, we say that a graph $G$ is {\it semi-perfect} if $\theta(G)= \alpha(G)$, where $\theta(G)$ and $\alpha(G)$ denote the clique cover number of $G$ and its independence number, respectively.
The NP-hardness of recognizing localizable graphs within the class of weakly chordal graphs follows from the corresponding hardness result for well-covered graphs~\cite{MR1161178,MR1217991}, using the fact that every weakly chordal graph is semi-perfect.
The remaining two NP-hardness results hold for graph classes where well-coveredness can be tested in polynomial time and
are obtained using reductions from edge- and vertex-colorability problems, by identifying two classes of well-covered graphs in which testing semi-perfection is intractable.

We further elaborate on the approach for proving the hardness result for the class of weakly chordal graphs and derive new hardness results for two natural problems related to strong cliques:

 \begin{theorem}\label{thm:strong-cliques-hardness}
The following problems are NP-hard, even for weakly chordal graphs:
\begin{enumerate}
  \item The problem of determining whether, given a graph and a partition of its vertex set into cliques, each clique in the partition is strong. (This problem is also co-NP-complete.)
  \item The problem of determining whether every vertex of a given graph is contained in a strong clique.
\end{enumerate}
\end{theorem}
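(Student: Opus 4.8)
My plan is to treat the two statements separately; both build on the equivalence, recorded above, that in a weakly chordal (hence semi-perfect) graph localizability and well-coveredness coincide, and on the co-NP-completeness of recognizing localizable graphs within the class of weakly chordal graphs (Theorem~\ref{thm:hard}). For statement (1), I would first observe that the problem lies in co-NP: if some clique $C$ of the given partition $\mathcal{P}$ fails to be strong, then a maximal independent set of the graph disjoint from $C$ is a polynomially checkable certificate of a negative answer. For co-NP-hardness I would reduce from the problem of recognizing localizable graphs within weakly chordal graphs. Given a weakly chordal graph $G$, pick a partition $\mathcal{P}$ of $V(G)$ into $\alpha(G)$ cliques; such a partition exists since semi-perfection gives $\theta(G)=\alpha(G)$, and it is computable in polynomial time because weakly chordal graphs are perfect and a minimum clique cover of a perfect graph is polynomial-time computable. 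The key equivalence is that every clique of $\mathcal{P}$ is strong if and only if $G$ is localizable: if all cliques of $\mathcal{P}$ are strong then $\mathcal{P}$ already witnesses that $G$ is localizable, while if $G$ is localizable then $G$ is well-covered, so every maximal independent set of $G$ has size $\alpha(G)=|\mathcal{P}|$ and, meeting each of the $|\mathcal{P}|$ cliques of $\mathcal{P}$ in at most one vertex, it meets each in exactly one vertex, so every clique of $\mathcal{P}$ is strong. Hence $G\mapsto(G,\mathcal{P})$ is a polynomial-time many-one reduction preserving weak chordality, which gives co-NP-completeness and, a fortiori, NP-hardness of the problem on weakly chordal graphs.

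For statement (2), I would give a polynomial-time reduction from the complement of 3-SAT. Given a 3-CNF formula $\varphi$ with variables $x_1,\dots,x_n$ and clauses $c_1,\dots,c_m$ (assumed non-tautological, with at least one clause and every variable occurring), construct a graph $G$ by: adding for each variable $x_i$ an edge $t_if_i$; making $Q=\{c_1,\dots,c_m\}$ a clique; joining $c_j$ to $t_i$ (resp.\ to $f_i$) whenever $x_i$ occurs positively (resp.\ negatively) in $c_j$; adding a vertex $v^\ast$ adjacent to all of $Q$; and attaching to $v^\ast$ a path $v^\ast w w'$ on two new vertices. Correctness rests on three points. (i) Each edge $t_if_i$ is a strong clique of $G$: a maximal independent set avoiding both $t_i$ and $f_i$ would have to contain a clause vertex adjacent to $t_i$ and one adjacent to $f_i$, that is, two adjacent vertices of $Q$, which is impossible; and $\{w,w'\}$ is a strong clique, being a pendant edge. (ii) The maximal cliques containing $v^\ast$ are exactly $\{v^\ast\}\cup Q$ and $\{v^\ast,w\}$; the clique $\{v^\ast,w\}$ is never strong, since a maximal independent set avoiding $v^\ast$ and $w$ can always be obtained by extending $\{c_j,w'\}$; and $\{v^\ast\}\cup Q$ is strong if and only if $\varphi$ is unsatisfiable, because the maximal independent sets of $G$ disjoint from $\{v^\ast\}\cup Q$ are precisely the sets $\{w\}\cup I_\sigma$ where $I_\sigma$ is the literal set of a satisfying assignment $\sigma$ of $\varphi$. (iii) Combining these: if $\varphi$ is unsatisfiable, then every vertex of $G$ lies in a strong clique (the literal vertices and $w,w'$ by (i), and every vertex of $\{v^\ast\}\cup Q$ by (ii)); if $\varphi$ is satisfiable, then $v^\ast$ lies in no strong clique, since every clique through $v^\ast$ extends to a maximal one and strongness is inherited by supercliques. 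Thus every vertex of $G$ lies in a strong clique if and only if $\varphi$ is unsatisfiable, which yields co-NP-hardness, hence NP-hardness, of the problem.

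The step I expect to be the main obstacle is verifying that the graph $G$ produced by the reduction for statement (2) is weakly chordal, i.e.\ has no induced cycle of length at least $5$ and no induced complement of such a cycle. The graph is structurally tame — the subgraph $G-Q-\{v^\ast,w,w'\}$ is an induced matching, the vertices $w,w',v^\ast$ lie in no induced cycle of length $\ge 4$, and any induced cycle meets the clique $Q$ in at most two (necessarily consecutive) vertices and contains at most one edge $t_if_i$, so has length at most $4$ — but ruling out all induced $\overline{C_k}$ with $k\ge 5$ requires a short case analysis, using that such a graph has independence number $2$ and each of its vertices exactly two non-neighbors, which in $G$ would force either an induced $C_5$ or a variable occurring both positively and negatively in a single clause. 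A secondary point needing care is (ii), in particular the exact description of the maximal independent sets of $G$ that avoid a prescribed clique through $v^\ast$; the remaining verifications are routine.
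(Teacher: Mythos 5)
Your argument is correct, but it takes a genuinely different route from the paper on both counts. For item 1, the paper stays inside the known 3-SAT reduction graph $G(X,\mathcal{C})$ (assuming no clause contains a complementary pair), exhibits the explicit partition consisting of the clause clique $C$ and the literal edges $\{x_i,\overline{x_i}\}$, and argues directly that the literal edges are strong while $C$ is strong iff the formula is unsatisfiable; you instead reduce from the co-NP-complete problem of recognizing localizable weakly chordal graphs (Theorem~\ref{thm:weakly-chordal}), using that a minimum clique cover of a perfect graph is polynomial-time computable and that, by Proposition~\ref{prop:alpha-chi-bar}, all cliques of an $\alpha$-clique cover are strong iff the graph is localizable. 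This is cleaner and more modular, at the price of invoking the ellipsoid-based clique-cover algorithm where the paper needs nothing beyond the original reduction; your ``co-NP-hard, hence NP-hard'' step uses the same (Turing-reduction) convention as the paper, whose own reduction likewise maps satisfiable formulas to no-instances. For item 2, the paper again reuses the same graph but strengthens the assumption on the 3-SAT instance (no variable occurring, positively or negatively, in every clause) to show that no clique mixing literal and clause vertices is strong; you instead attach a gadget (an apex $v^{\ast}$ over the clause clique $Q$ plus a pendant path $v^{\ast}ww'$) so that the question localizes to whether $\{v^{\ast}\}\cup Q$ is strong. Your claims (i)--(iii) check out (non-tautological clauses, $m\ge 1$, and every variable occurring suffice). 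The one step you flag but do not finish, weak chordality of the gadget graph, does go through and is short: $w'$ and $w$ have degree at most $2$, so they lie on no induced $C_k$ or $\overline{C_k}$ with $k\ge 5$; an induced cycle of length at least $4$ through $v^{\ast}$ would need two non-adjacent neighbours of $v^{\ast}$, forcing it through $w$ and hence $w'$, a dead end; and if $v^{\ast}$ lay in an induced $\overline{C_k}$ with $k\ge 5$, its two non-neighbours there would be a pair $t_if_i$, so the remaining $k-3\ge 2$ vertices would be clause vertices occupying consecutive positions on the underlying $k$-cycle and hence non-adjacent in $G$, contradicting that $Q$ is a clique. Thus any long induced cycle or co-cycle avoids $v^{\ast},w,w'$ and lies in the base graph, which is weakly chordal by the cited reduction (or by your direct cycle argument together with a similar $\overline{C_k}$ analysis). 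With that paragraph added, your proof is complete.
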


In particular, while it is not known whether the problem of recognizing localizable graphs is in NP, we show that the most natural certificate for a yes instance (that is, a partition of the graph's vertex set into strong cliques) is most likely not verifiable in polynomial time. Our approach also gives an alternative proof of the fact that the problem of testing whether a given clique in a graph is strong is co-NP-complete, as shown by Zang~\cite{MR1344757}.
Note that the computational complexity status of several related problems is open. This is the case for the problem of recognizing graphs every induced subgraph of which has a strong clique, graphs in which every edge is contained in a strong clique, or graphs in which every maximal clique is strong (see the discussion following Corollary~\ref{cor:weakly-chordal} in Section~\ref{sec:weakly-chordal} for details).

\bigskip

\noindent{\bf 2. Characterizations.}

\medskip
\noindent Since a semi-perfect graph is localizable if and only if it is well-covered, any result characterizing well-covered graphs within a class of semi-perfect graphs immediately implies the same characterization of localizable graphs within the class. This yields characterizations of localizable graphs within the classes of bipartite graphs~\cite{MR0469831},
or, more generally, of triangle-free semi-perfect graphs~\cite{MR1264476}, as well as
of chordal graphs~\cite{MR1368737}, or, more generally, of $C_4$-free semi-perfect graphs~\cite{MR1264476}.
As our second set of results, we extend the list of graph classes in which localizable graphs are characterized, by characterizing
localizable graphs within the classes of triangle-free graphs, $C_4$-free graphs, cubic graphs, and line graphs. Our characterizations also imply polynomial time recognition algorithms of localizable graphs within these classes.

The most involved of these characterizations is the one for line graphs. Recall that the line graph of a graph $G$ is well-covered if and only if $G$ is {\it equimatchable}, that is, if all maximal matchings of $G$ are of the same size. The literature on equimatchable graphs is extensive, see, e.g.,~\cite{MR777180,Favaron,Meng,KPS2003,KP2009,Lewin,demange_ekim_equi,FHV2010,EK2016}. In particular, several characterizations and polynomial time recognition algorithms of equimatchable graphs are known~\cite{Lewin,MR777180,demange_ekim_equi}. Our characterization of localizable line graphs is derived independently of the characterizations of equimatchable graphs and implies, in particular, that the equimatchable bipartite graphs
are the only triangle-free graphs whose line graphs are localizable.

The characterization of localizable graphs within the class of triangle-free graphs is obtained using known characterizations of very well-covered graphs~\cite{Staples,MR677051} and the fact that every very well-covered graph is localizable (which we show in Section~\ref{sec:very-well-covered}).
The characterization of localizable $C_4$-free graphs generalizes the above-mentioned results on well-covered chordal~\cite{MR1368737}, resp., $C_4$-free semi-perfect graphs~\cite{MR1264476}. We obtain this result by first characterizing strong cliques in $C_4$-free graphs. To put these results in perspective, note that no characterization of well-covered triangle-free or $C_4$-free graphs is known. (Well-covered graphs of girth at least five were characterized by Finbow et al.~\cite{MR1198396} and well-covered graphs without a (not necessarily induced) subgraph isomorphic to $C_4$ were studied by Brown et al.~\cite{MR2340625}.)

The class of localizable cubic graphs consists of an infinite family of planar cubic graphs
along with three small graphs ($K_4$, $K_{3,3}$, and the complement of $C_6$). This classification can be obtained from the classification of well-covered cubic graphs due to Campbell et al.~\cite{MR1220613}. However, the way to characterizing the well-covered cubic graphs was long, building on earlier results due to Campbell and Plummer~\cite{Campbell,MR942505}; we give a short direct proof of the classification of localizable cubic graphs.

We postpone the exact statements of the characterizations to the respective sections (Theorems~\ref{thm:triangle-free} and~\ref{thm:C4-free} in Section~\ref{sec:triangle-free}, Theorem~\ref{thm:3-regular} in Section~\ref{sec:cubic}, and Theorem~\ref{thm:line} in Section~\ref{sec:line}).

\bigskip
\noindent{\bf 3. Counterexamples to a conjecture by Zaare-Nahandi.}

\medskip
\noindent We give a family of counterexamples to the following recent conjecture closely related to localizable graphs.

\begin{conjecture}[Zaare-Nahandi~\cite{MR3356635}]\label{conj:Zaare-Nahandi}
Let $G$ be an $s$-partite well-covered graph in which all maximal cliques are of size $s$. Then $G$ is semi-perfect.
\end{conjecture}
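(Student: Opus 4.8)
The plan is to prove the stronger statement that every such graph $G$ is in fact \emph{localizable}; since every localizable graph is well-covered and semi-perfect, this yields the conjecture. Throughout, write $\alpha=\alpha(G)$ and fix an $s$-coloring with color classes $V_1,\dots,V_s$. Two preliminary observations set the stage. First, since all maximal cliques have size $s$ we have $\omega(G)=s$, and since $G$ is $s$-partite, $s=\omega(G)\le\chi(G)\le s$, so $\chi(G)=s$. Second, a clique meets each $V_i$ in at most one vertex, so a maximal clique, having $s$ vertices, is a \emph{transversal} of the color classes, meeting each $V_i$ exactly once; in particular every vertex lies in a transversal clique of size $s$.

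The key reduction is to peel off strong cliques one at a time. First I would record a characterization valid in any well-covered graph $H$ with $\alpha(H)=a$: a clique $C$ of $H$ is strong if and only if $\alpha(H-C)=a-1$. The nontrivial implication is that strongness forces the drop, which holds because a maximal independent set of $H-C$ extends to a maximal (hence size-$a$) independent set of $H$ by adding at most one vertex of $C$, and no size-$a$ independent set can avoid a strong $C$; conversely, a drop of one forces every maximal independent set of $H$ to meet $C$, using well-coveredness. The same computation shows that $H-C$ is again well-covered with independence number $a-1$. Consequently, if at every stage of the deletion process a strong clique exists, the deleted cliques form a partition of $V(G)$ into $\alpha$ strong cliques, proving $G$ localizable and in particular semi-perfect.

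It therefore remains to produce, in $G$ and in each graph arising by successively deleting strong cliques, a strong clique to delete. Here I would try to exploit the transversal structure: since every vertex lies in a size-$s$ maximal clique, and each such clique uses exactly one vertex per color class, I would search for a transversal clique $C$ whose deletion drops $\alpha$ by exactly one, equivalently (by the characterization above) a maximal clique meeting every maximum independent set. A natural line of attack is a Hall/defect-type argument: starting from the largest color class, try to match its vertices to pairwise disjoint transversal cliques whose union intersects every maximal independent set; alternatively, one can attempt an averaging argument forcing some transversal clique to meet every maximum independent set.

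The main obstacle is exactly this last step, and I expect it to be genuinely hard for two reasons. First, well-coveredness together with the transversal structure does not obviously force the existence of even a single strong clique, and since deciding whether a given clique is strong is co-NP-hard in general, no purely local certificate is available. Second, and more seriously, even granting a strong clique $C$ in $G$, the deleted graph $G-C$ need not retain the property that all its maximal cliques have size $s$: removing $C$ can expose smaller maximal cliques, so the inductive invariant is not automatically preserved and must either be strengthened or the clique $C$ chosen with extra care (for instance so that $N[C]$ interacts tamely with the remaining color classes). If this invariant cannot be maintained—so that at some stage a well-covered $s$-partite descendant has no strong clique at all—the argument collapses, and this is precisely the point at which one should suspect that the conjecture itself may fail and begin looking for a counterexample among graphs whose transversal cliques can all be simultaneously avoided by a single maximum independent set.
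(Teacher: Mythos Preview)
Your attempt cannot succeed because the conjecture is \emph{false}: the paper does not prove it but disproves it (Theorem~\ref{thm:counterexamples}). So the ``main obstacle'' you flag at the end is not merely hard---it is insurmountable, and your closing suspicion that one should look for a counterexample is exactly right.

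Concretely, the paper takes an odd cycle $C$ of length at least $5$ and lets $H$ be its corona (add a pendant vertex to each vertex of $C$). Then $H$ is localizable (the $|V(C)|$ pendant edges are simplicial, hence strong, cliques partitioning $V(H)$), so $\alpha(H)=\theta(H)=|V(C)|$, and every maximal clique of $H$ has size $2$. The complement $G=\overline{H}$ is therefore an $s$-partite well-covered graph, with $s=|V(C)|$, in which all maximal cliques have size $s$. But $G$ is not semi-perfect: $\theta(G)=\chi(H)\ge 3$ (since $H$ contains the odd cycle $C$), while $\alpha(G)=\omega(H)=2$. In fact $G$ has no strong clique at all, because a strong clique of $G$ would be a strong independent set of $H$, and any such set would have to meet every edge of $C$---impossible for an odd cycle.

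This also pinpoints where your inductive scheme breaks: already at the very first step there need not exist a single strong clique to peel off, let alone one whose removal preserves the hypotheses. Your reduction ``strong clique exists at every stage $\Rightarrow$ localizable'' is correct, and your characterization of strong cliques in well-covered graphs via $\alpha(H-C)=\alpha(H)-1$ is fine; the failure is purely in the existence step, for the reason above.
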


In terms of localizable graphs, the conjecture can be equivalently posed as follows:

\begin{conjecture}\label{conj2}
Let $G$ be a localizable co-well-covered graph.
Then $\overline{G}$ is localizable.
\end{conjecture}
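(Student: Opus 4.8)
The plan is to pass to the complement characterization and reduce the statement to a single chromatic identity. By the stated equivalence, $\overline{G}$ is localizable if and only if it is well-covered and semi-perfect. Since $G$ is co-well-covered, $\overline{G}$ is well-covered by definition, so the entire burden falls on showing that $\overline{G}$ is semi-perfect, i.e.\ $\theta(\overline{G})=\alpha(\overline{G})$. Translating to $G$, a clique cover of $\overline{G}$ is exactly a proper coloring of $G$ and a maximum independent set of $\overline{G}$ is a maximum clique of $G$, so the required identity is $\chi(G)=\omega(G)$. I would first record the clean observation that this identity is in fact sufficient: if $\chi(G)=\omega(G)=:s$, then in any proper $s$-coloring every maximal clique of $G$ --- which by co-well-coveredness has exactly $s$ vertices --- receives all $s$ colors, so each color class meets every maximal clique and is therefore a strong independent set of $G$, equivalently a strong clique of $\overline{G}$. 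Thus the whole problem collapses to proving the inequality $\chi(G)\le\omega(G)$.

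To attack $\chi(G)\le\omega(G)$ I would mine the strong-clique partition $V(G)=Q_1\cup\cdots\cup Q_k$ for structure. Two facts are immediate and useful: each maximal independent set meets every $Q_i$ in exactly one vertex (at most one since $Q_i$ is a clique, at least one since it is strong), so $k=\alpha(G)$; and for any strong clique $Q$ one has $\alpha(G-Q)=\alpha(G)-1$, since a maximum independent set uses exactly one vertex of $Q$ while no independent set of size $\alpha(G)$ can avoid a strong clique. The natural line of proof is induction: delete one part $Q_1$, color $G-Q_1$ with $\omega$ colors, and extend the coloring across $Q_1$ using the freedom afforded by $|Q_1|\le\omega(G)$. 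Alternatively one could attempt a global assignment argument, viewing the sought coloring as a partition of $V(G)$ into $s$ transversals of the family of maximal cliques and trying to build it by a Hall- or matching-type condition, or one could try to show that localizable co-well-covered graphs contain no odd hole and no odd antihole and are hence perfect.

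The hard part, and the place where I expect the argument to stall, is exactly the passage $\chi(G)\le\omega(G)$. The inductive route breaks because neither co-well-coveredness (all maximal cliques of the same size) nor strongness of the remaining parts $Q_2,\dots,Q_k$ is preserved when $Q_1$ is deleted, so the induction hypothesis does not apply to $G-Q_1$; finding the correct reduction --- one that removes a controlled transversal, or contracts, in a way that keeps both properties intact --- is the crux. The deeper difficulty is conceptual: localizability is a packing condition (a partition into strong cliques) whereas $\chi$ is a covering parameter, and there is no evident mechanism by which a packing hypothesis should force $\chi(G)\le\omega(G)$. I would therefore treat the search for such a mechanism as the decisive step, while remaining alert that its apparent absence is precisely the signal that the statement may instead admit a counterexample.
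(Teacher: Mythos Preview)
Your reduction is correct and cleanly stated: the question collapses exactly to whether $\chi(G)=\omega(G)$ holds for every localizable co-well-covered graph $G$. However, the conjecture is \emph{false}, so no proof exists; your closing remark that the absence of a mechanism forcing $\chi(G)\le\omega(G)$ ``is precisely the signal that the statement may instead admit a counterexample'' is the right instinct, and the paper follows it rather than attempting a proof.

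The paper's counterexamples are the coronas of odd cycles of length at least $5$: take $C=C_{2k+1}$ with $k\ge 2$ and attach a pendant vertex to each vertex of $C$. The resulting graph $G$ is localizable (the closed neighborhoods of the pendant vertices are pairwise disjoint simplicial, hence strong, cliques partitioning $V(G)$) and co-well-covered (every maximal clique has size~$2$). But $\overline{G}$ has no strong clique at all, let alone a partition into them: a strong clique of $\overline{G}$ is a strong independent set of $G$, and any such set would have to meet every edge of $C$ (each being a maximal clique of $G$), which is impossible since $C$ is not bipartite. In the language of your reduction, $\omega(G)=2$ while $\chi(G)\ge\chi(C_{2k+1})=3$.

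The gap in your attempt, then, is not a missing lemma but the choice of direction: once the problem is reduced to $\chi(G)\le\omega(G)$, the corona of $C_5$ (or indeed any triangle-free non-bipartite graph admitting a partition into simplicial edges) disposes of the question immediately. Your inductive and perfectness strategies were doomed from the start, and the ``Hall-type transversal'' idea fails for precisely the reason the counterexample exhibits: the maximal cliques of $G$ need not admit even a single independent transversal, let alone $\omega(G)$ disjoint ones.
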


We disprove these two equivalent conjectures by constructing an infinite family of counterexamples to the weaker statement saying that every localizable co-well-covered graph has a strong independent set. We also give a related hardness result showing that it is NP-hard to determine whether the complement of a given localizable co-well-covered graph is localizable. The proof is based on a reduction from the $3$-colorability problem in triangle-free graphs and shows a way how to transform, in a simple way, any triangle-free graph of chromatic number more than $3$ to a counterexample to Conjecture~\ref{conj:Zaare-Nahandi}.

\bigskip
\noindent{\bf Structure of the paper.}
In Section~\ref{sec:preliminaries} we give several equivalent formulations of the property of localizability.
In Section~\ref{sec:very-well-covered} we show that every very well-covered graph is localizable.
In Section~\ref{sec:hard} we develop the hardness results, including Theorems~\ref{thm:hard} and~\ref{thm:strong-cliques-hardness}.
Section~\ref{sec:poly} is devoted to establishing characterizations of localizable graphs within the classes of triangle-free graphs, $C_4$-free graphs, cubic graphs, and line graphs. Counterexamples to Conjecture~\ref{conj:Zaare-Nahandi} are given in Section~\ref{sec:counterexample}.
We conclude the paper with some open questions in Section~\ref{sec:conclusion}.

\section{Equivalent formulations of localizability}\label{sec:preliminaries}

In this section, we give several equivalent formulations of the property of localizability. First we recall some definitions and fix some notation. We consider only finite, simple and undirected graphs. Given a graph $G=(V,E)$, its complement $\overline{G}$ is the graph with vertex set $V$ in which two distinct vertices are adjacent if and only if they are non-adjacent in $G$. By $K_n$, $P_n$, and $C_n$ we denote the $n$-vertex complete graph, path, and cycle, respectively, and by $K_{m,n}$ the complete bipartite graph with parts of sizes $m$ and $n$. The {\it degree} of a vertex $v$ in a graph $G$ is denoted by $d_G(v)$, its neighborhood by $N_G(v)$ (or simply by $N(v)$ if the graph is clear from the context), and its closed neighborhood by $N_G[v]$ (or simply by $N[v]$).
For a set of vertices $X\subseteq V(G)$, we denote by $N_G(X)$ (or $N(X)$) the set of all vertices in $V(G)\setminus X$ having a neighbor in $X$.
A {\it triangle} in a graph is a clique of size $3$; a graph is {\it triangle-free} if it has no triangles. Similarly, a graph is {\it $C_4$-free} if it has no induced subgraph isomorphic to a $C_4$. We will often identify a triangle with the set of its edges; whether we consider a triangle as a set of vertices or as a set of edges will always be clear from the context.

Given a graph $G$, we denote by $\alpha(G)$ its {\it independence number}, that is, the maximum size of an independent set in $G$,
by $i(G)$ its {\it independent domination number}, that is, the minimum size of an independent dominating set in $G$
(equivalently: the minimum size of a maximal independent set in $G$), by $\omega(G)$ its {\it clique number}, that is, the maximum size of a clique in $G$, by $\chi(G)$ its {\it chromatic number}, that is, the minimum number of independent sets that partition its vertex set,
and by $\theta(G)$ its {\it clique cover number}, that is,
the minimum number of cliques that partition its vertex set.
Every graph $G$ has
$\alpha(G) = \omega(\overline{G})$,
$\theta(G) = \chi(\overline{G})$, $\chi(G)\ge \omega(G)$, and
$\theta(G)\ge \alpha(G)$.
It follows that every graph $G$ satisfies the following chain of inequalities:
\begin{equation}\label{ineq}
i(G)\le \alpha(G)\le \theta(G)\,.
\end{equation}
Clearly, a graph $G$ is well-covered if and only if $i(G) = \alpha(G)$.

For a positive integer $k$, we say that a graph $G$ is {\em $k$-localizable} if it admits a partition of its vertex set into exactly $k$ strong cliques. {Recall that a graph $G$ is said to be {\it semi-perfect} if $\theta(G)= \alpha(G)$, that is, if there exists a collection of $\alpha(G)$ cliques partitioning its vertex set. We will refer to such a collection as an {\it $\alpha$-clique cover} of $G$. Thus, $G$ is semi-perfect if and only if it has an $\alpha$-clique cover, and $G$ is $\alpha(G)$-localizable if and only if it has a $\alpha$-clique cover in which every clique is strong.

Now we have everything ready to prove the following equivalent formulations of localizability.

\begin{proposition}\label{prop:alpha-chi-bar}
For every graph $G$, the following statements are equivalent.
\begin{enumerate}
  \item[(a)] $G$ is localizable.
  \item[(b)] $G$ is $\alpha(G)$-localizable (equivalently, $G$ has an $\alpha$-clique cover in which every clique is strong).
  \item[(c)] $G$ has an $\alpha$-clique cover and every clique in every $\alpha$-clique cover of $G$ is strong.
  \item[(d)] $G$ is well-covered and semi-perfect.
  \item[(e)] $i(G) = \theta(G)$.
\end{enumerate}
\end{proposition}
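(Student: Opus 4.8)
The plan is to prove the equivalence by establishing a cycle of implications together with a few direct equivalences, using the basic inequality chain (\ref{ineq}) as the backbone. First I would record the easy observation that in any partition of $V(G)$ into strong cliques, each strong clique contributes exactly one vertex to every maximal independent set; hence if $G$ has a partition into $k$ strong cliques, then every maximal independent set has size exactly $k$, so $G$ is well-covered and $k=\alpha(G)=i(G)$. This single observation simultaneously shows (a)$\Rightarrow$(b), shows that a localizable graph satisfies $i(G)=\alpha(G)$ (well-coveredness), and — since the $k$ strong cliques form a clique cover — shows $\theta(G)\le k=\alpha(G)$, which combined with $\theta(G)\ge\alpha(G)$ from (\ref{ineq}) gives $\theta(G)=\alpha(G)$, i.e.\ semi-perfection. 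Thus (a)$\Rightarrow$(b), (a)$\Rightarrow$(d), and (a)$\Rightarrow$(e) all fall out of this one counting argument (for (e), note $i(G)=\alpha(G)=\theta(G)$).

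Next I would close the loop. For (d)$\Rightarrow$(c): suppose $G$ is well-covered and semi-perfect, so an $\alpha$-clique cover exists; take any such cover $\{C_1,\dots,C_{\alpha(G)}\}$ and let $I$ be any maximal independent set. Since $I$ meets each $C_j$ in at most one vertex and $|I|=\alpha(G)$ (by well-coveredness) while there are only $\alpha(G)$ cliques, $I$ must meet every $C_j$ in exactly one vertex; hence every $C_j$ is strong, giving (c). The implication (c)$\Rightarrow$(b) is immediate once we know an $\alpha$-clique cover exists and all its cliques are strong; and (b)$\Rightarrow$(a) is trivial since an $\alpha$-clique cover consisting of strong cliques is in particular a partition into strong cliques. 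This establishes (a)$\Leftrightarrow$(b)$\Leftrightarrow$(c)$\Leftrightarrow$(d). Finally, for (e): I showed above (a)$\Rightarrow$(e); conversely, if $i(G)=\theta(G)$, then by (\ref{ineq}) we get $i(G)\le\alpha(G)\le\theta(G)=i(G)$, forcing $i(G)=\alpha(G)=\theta(G)$, which says precisely that $G$ is well-covered and semi-perfect, i.e.\ (d) holds; so (e)$\Leftrightarrow$(d), completing the proof.

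I do not anticipate a genuine obstacle here — the whole argument is a packaging of the pigeonhole principle against the inequality chain $i(G)\le\alpha(G)\le\theta(G)$. The only point requiring a little care is making sure the counting step is stated correctly: I must use that a clique and an independent set share at most one vertex, that \emph{every} maximal independent set (not just a maximum one) has size $\alpha(G)$ when $G$ is well-covered, and that strongness of a clique is exactly the property of meeting every maximal independent set. Getting the quantifiers right in (c) — ``$G$ has an $\alpha$-clique cover \emph{and} every clique in \emph{every} such cover is strong'' — is the one place where a sloppy write-up could go wrong, so I would be explicit that the existence of \emph{some} $\alpha$-clique cover follows from (d) and that the pigeonhole argument applies uniformly to \emph{all} of them.
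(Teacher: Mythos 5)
Your proposal is correct and follows essentially the same route as the paper: counting the intersections of maximal independent sets with the cliques of a (strong) $\alpha$-clique cover, and playing this off against the chain $i(G)\le\alpha(G)\le\theta(G)$. The only difference is cosmetic — you bundle (a)$\Rightarrow$(b),(d),(e) into one counting argument and phrase (d)$\Rightarrow$(c) as a pigeonhole rather than a contradiction, but the underlying ideas coincide with the paper's proof.
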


\begin{proof}
(a) $\Rightarrow$ (b): Suppose that $G$ is localizable, and let $C_1,\ldots, C_k$ be a collection of strong cliques of $G$ partitioning its vertex set.
Let $S$ be a maximal independent set of $G$. Then $S$ intersects each $C_i$ in a vertex, which implies that
$|S| = \sum_{i = 1}^k |C_i\cap S| = k$. Since $S$ was arbitrary, $G$ is well-covered, with
$\alpha(G) = k$. In particular, $G$ is $\alpha(G)$-localizable. Thus, (a) implies (b).

(b) $\Rightarrow$ (d):
Suppose that $G$ is $\alpha(G)$-localizable. Then $\theta(G)\le \alpha(G)$. Since the opposite inequality holds for every graph, we conclude that
$\theta(G)= \alpha(G)$. Moreover, since $V(G)$ has a partition into $\alpha(G)$ strong cliques, every maximal independent set in $G$ is of size $\alpha(G)$.
Thus, $G$ is also well-covered and the implication (b) $\Rightarrow$ (d) follows.

(d) $\Rightarrow$ (c): Suppose that $G$ is well-covered and semi-perfect.
Since $G$ is semi-perfect, it has an $\alpha$-clique cover. Now consider and arbitrary $\alpha$-clique cover $C_1,\ldots, C_{\alpha(G)}$ of $G$.
We will show that each clique $C_i$ is strong. Suppose this is not the case. Without loss of generality, assume that $C_1$ is not strong.
Then, there exists a maximal independent set $S$ of $G$ disjoint from $C_1$.
Consequently, $|S| = \sum_{i = 2}^{\alpha(G)}|S\cap C_i|\le \alpha(G)-1$, contrary to the fact that $G$ is well-covered.
This proves that every $C_i$ is strong and establishes the implication (d) $\Rightarrow$ (c).

(c) $\Rightarrow$ (a): Trivial.

(d) $\Leftrightarrow$ (e): Recall that $G$ is well-covered if and only if $i(G) = \alpha(G)$. Therefore, condition (d) is equivalent to
$i(G) = \alpha(G) = \theta(G)$, which, by~\eqref{ineq}, is equivalent to condition (e).

The above equalities and implications complete the proof of the proposition.
\end{proof}

Proposition~\ref{prop:alpha-chi-bar} will be applied several times in the paper.
At this point let us mention some of its consequences.

\begin{corollary}\label{cor:semi-perfect}
For every semi-perfect graph $G$, the following statements are equivalent.
\begin{enumerate}
  \item $G$ is localizable.
  \item $G$ is $\alpha(G)$-localizable (equivalently, $G$ has an $\alpha$-clique cover in which every clique is strong).
  \item $G$ is well-covered.
  \item Every clique in every $\alpha$-clique cover of $G$ is strong.
\end{enumerate}
\end{corollary}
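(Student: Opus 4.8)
The plan is to obtain Corollary~\ref{cor:semi-perfect} as an immediate specialization of Proposition~\ref{prop:alpha-chi-bar} to the case of semi-perfect graphs. The key observation is that, by definition, a semi-perfect graph possesses an $\alpha$-clique cover; hence, under the standing hypothesis that $G$ is semi-perfect, the two ``existence of an $\alpha$-clique cover'' clauses that are built into conditions~(c) and~(d) of the proposition are automatically satisfied and may simply be dropped.

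Carrying this out: condition~(d) of Proposition~\ref{prop:alpha-chi-bar} reads ``$G$ is well-covered and semi-perfect''; since $G$ is semi-perfect by assumption, this reduces to ``$G$ is well-covered'', which is statement~3 of the corollary. Likewise, condition~(c) reads ``$G$ has an $\alpha$-clique cover and every clique in every $\alpha$-clique cover of $G$ is strong''; since the first conjunct holds by assumption, this reduces to ``every clique in every $\alpha$-clique cover of $G$ is strong'', which is statement~4 of the corollary. Finally, statements~1 and~2 of the corollary are verbatim conditions~(a) and~(b) of the proposition. Thus statements 1--4 of the corollary are precisely conditions~(a), (b), (d), (c) of Proposition~\ref{prop:alpha-chi-bar} restated under the hypothesis of semi-perfection, and the pairwise equivalence of those conditions has already been established.

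I do not expect any genuine obstacle here: the argument is a routine unwinding of definitions, invoking the already-proved Proposition~\ref{prop:alpha-chi-bar}. The only point that deserves an explicit sentence of justification is why semi-perfection allows us to delete the existential clauses from conditions~(c) and~(d), namely because ``semi-perfect'' is by definition the assertion that an $\alpha$-clique cover exists.
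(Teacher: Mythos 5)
Your proposal is correct and matches the paper's approach: the corollary is presented there as an immediate consequence of Proposition~\ref{prop:alpha-chi-bar}, obtained exactly as you describe by noting that semi-perfection supplies the existential clauses in conditions~(c) and~(d). Nothing further is needed.
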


Corollary~\ref{cor:semi-perfect} implies a characterization of well-covered graphs within a special class of tripartite graphs given by Haghighi~\cite[Theorem 3.2]{MR3292063} as well as a generalization to special $r$-partite graphs due to Jafarpour-Golzari and Zaare-Nahandi~\cite[Theorem 2.3]{unmixed2015}.\footnote{Indeed, Theorem 3.2 of~\cite{MR3292063} considers a tripartite graph $G$ given with an $\alpha$-clique cover into $\alpha(G)$ triangles; the conditions stated in the theorem are equivalent to the property that each triangle in this particular $\alpha$-clique cover is strong.
The situation is similar with Theorem 2.3 from~\cite{unmixed2015}, except that triangles are replaced with $r$-cliques.}
The equivalence between conditions 2, 3, and 4 of Corollary~\ref{cor:semi-perfect} also follows from a result of Zaare-Nahandi, Theorem 2.1 in~\cite{MR3356635}.

A graph $G$ is {\it perfect} if $\chi(H) = \omega(H)$ holds for every induced subgraph $H$ of $G$. Since the complementary graph $\overline{G}$ is perfect whenever $G$ is perfect~\cite{MR0302480}, every perfect graph $G$ is also semi-perfect. In particular, Corollary~\ref{cor:semi-perfect} implies:

\begin{corollary}\label{cor:perfect}
A perfect graph is well-covered if and only if it is localizable.
\end{corollary}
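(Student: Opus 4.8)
The plan is to derive the statement directly from Corollary~\ref{cor:semi-perfect} by first checking that perfect graphs fall within its scope. Concretely, I would begin by recalling that every perfect graph is semi-perfect: if $G$ is perfect then, by the Perfect Graph Theorem~\cite{MR0302480}, so is its complement $\overline{G}$, whence $\chi(\overline{G}) = \omega(\overline{G})$; since $\theta(G) = \chi(\overline{G})$ and $\alpha(G) = \omega(\overline{G})$, this gives $\theta(G) = \alpha(G)$, i.e.\ $G$ is semi-perfect. This is exactly the observation made in the paragraph preceding the statement, so in the write-up it can simply be invoked.

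With the semi-perfection of $G$ in hand, the result is immediate from the equivalence of conditions~1 and~3 in Corollary~\ref{cor:semi-perfect}: a semi-perfect graph is localizable if and only if it is well-covered. Specializing to our perfect graph $G$ yields precisely the assertion, in both directions.

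I do not expect any genuine obstacle here; the corollary is a straightforward specialization of Corollary~\ref{cor:semi-perfect}, whose only additional ingredient is the self-complementarity of perfection. If a self-contained argument were preferred, one could instead bypass Corollary~\ref{cor:semi-perfect} and appeal to Proposition~\ref{prop:alpha-chi-bar}, conditions (a) and (d): localizability is equivalent to being well-covered \emph{and} semi-perfect, and for a perfect (hence semi-perfect) graph the second condition is automatic, so the two properties coincide. Either route gives a two-line proof once the semi-perfection of perfect graphs is noted.
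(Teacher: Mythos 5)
Your proposal is correct and matches the paper's argument: the paper likewise notes that perfection is preserved under complementation, hence every perfect graph is semi-perfect, and then invokes Corollary~\ref{cor:semi-perfect} (equivalently Proposition~\ref{prop:alpha-chi-bar}) to conclude that well-coveredness and localizability coincide. No gaps.
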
}

\section{Very well-covered graphs are localizable}\label{sec:very-well-covered}

A well-covered graph $G$ is said to be {\em very well-covered} if it has no isolated vertices and $\alpha(G) = |V(G)|/2$.
{Very well-covered graphs were studied in commutative algebra, in the context of connections between properties of a graph $G$ with properties of
the simplicial complex whose faces are the independent sets of $G$, and of the edge ideal of $G$, see, e.g.,~\cite{MR2793950,MR3512661}.}
Very well-covered graphs were characterized by Staples in 1975~\cite[Theorem 1.11]{Staples} and independently by Favaron in 1982~\cite{MR677051},
as follows. Given a matching $M$ in a graph $G$, we say that $M$ {\it satisfies property (P)} if
for every edge $uv\in M$, we have $N(u)\cap N(v) = \emptyset$ and
every vertex of $N(u)$ is adjacent to every vertex of $N(v)$.

\begin{theorem}[Staples~\cite{Staples} and Favaron~\cite{MR677051}]\label{thm:Favaron-original}
For a graph $G$ without isolated vertices, the following properties are equivalent:
\begin{enumerate}
  \item $G$ is very well-covered.
  \item There exists a perfect matching in $G$ satisfying property (P).
  \item There exists a perfect matching in $G$ and every perfect matching of $G$ satisfies (P).
\end{enumerate}
\end{theorem}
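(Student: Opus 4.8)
The plan is to establish the cycle of implications $(3)\Rightarrow(2)\Rightarrow(1)\Rightarrow(3)$. The implication $(3)\Rightarrow(2)$ is immediate: (3) asserts that $G$ has a perfect matching and that all of them satisfy (P), so in particular one does. For $(2)\Rightarrow(1)$, let $M$ be a perfect matching satisfying (P). Since an independent set meets each edge of $M$ in at most one vertex, $\alpha(G)\le|M|=|V(G)|/2$; it therefore suffices to prove that every maximal independent set $S$ of $G$ has size $|M|$, which yields both well-coveredness and $\alpha(G)=|V(G)|/2$, hence very well-coveredness (the absence of isolated vertices being part of the hypothesis). If some maximal $S$ had $|S|<|M|$, then some edge $uv\in M$ would have both ends outside $S$; maximality of $S$ provides a neighbour $u'$ of $u$ and a neighbour $v'$ of $v$ in $S$, and (P) forces $u'\ne v'$ (since $N(u)\cap N(v)=\emptyset$) and $u'v'\in E(G)$ (since $u'\in N(u)$ and $v'\in N(v)$), contradicting the independence of $S$.

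For $(1)\Rightarrow(3)$, put $n:=|V(G)|/2=\alpha(G)$. The first and crucial step is that $G$ has a perfect matching. I would argue by contradiction using the Gallai--Edmonds structure theorem: if $G$ had no perfect matching, then in the decomposition $V(G)=D\cup A\cup C$ the graph $G[C]$ has a perfect matching, while $G[D]$ has at least $|A|+2$ components, each factor-critical. Using that a connected factor-critical graph on $2k+1$ vertices has independence number at most $k$, one builds a maximal independent set of $G$ by taking a maximal independent set of $G[C]$ (of size at most $|C|/2$), a maximal independent set inside each component $D_i$ of $G[D]$ (of size at most $(|D_i|-1)/2$ when $|D_i|\ge 3$), and extending greedily through $A$; a size count then produces a maximal independent set of size at most $n-1$, contradicting well-coveredness. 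The single-vertex components of $G[D]$ require a separate argument (each has a neighbour in $A$ since $G$ has no isolated vertices), which I would handle by first ruling out a vertex with two degree-one neighbours and then reducing; this is the delicate point.

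With a perfect matching in hand, the remaining step is to show that \emph{every} perfect matching $M$ of $G$ satisfies (P). The key observation, forced by $|M|=n=\alpha(G)$ together with well-coveredness, is that every maximal independent set $S$ of $G$ has size $n$ and therefore contains exactly one endpoint of each of the $n$ edges of $M$ — call such an $S$ a transversal of $M$. Fix $uv\in M$. If some $w$ lay in $N(u)\cap N(v)$, then extending $\{w\}$ to a maximal independent set $S$ would give a transversal of $M$ containing neither $u$ nor $v$ (both are adjacent to $w\in S$), which is impossible; hence $N(u)\cap N(v)=\emptyset$. For the second half of (P), take $a\in N(u)$ and $b\in N(v)$; the cases $a=v$ and $b=u$ are trivial, and otherwise the first half makes $a,u,v,b$ four distinct vertices inducing the path $a-u-v-b$. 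If $a\not\sim b$, extend $\{a,b\}$ to a maximal independent set $S$; being a transversal it contains $u$ or $v$, but $a\in S$ excludes $u$, forcing $v\in S$, which contradicts $b\in S$. Hence $a\sim b$, so $M$ satisfies (P), and $(3)$ follows.

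The only genuine obstacle I foresee is the existence of a perfect matching, and especially the treatment of single-vertex components of $G[D]$. A tempting elementary shortcut — take a single maximum matching, extend its set of exposed vertices to a maximal independent set, and count — does not by itself produce a contradiction, so one really does need the factor-critical structure supplied by Gallai--Edmonds (equivalently, the deficiency form of Tutte's theorem together with a careful choice of barrier), plus a small ad hoc argument for the trivial components. The two easy directions and the transversal argument, by contrast, are short and purely combinatorial.
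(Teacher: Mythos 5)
The paper does not prove this theorem at all: it quotes it from Staples and Favaron, so there is no internal proof to compare yours against and your attempt must stand on its own. Most of it does. The implications $(3)\Rightarrow(2)$ and $(2)\Rightarrow(1)$ are correct and complete, and so is the second half of $(1)\Rightarrow(3)$: once a perfect matching $M$ is available, your observation that well-coveredness together with $\alpha(G)=|M|=|V(G)|/2$ forces every maximal independent set to be a transversal of $M$ does yield both halves of property (P) for every perfect matching, exactly as you argue.

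The genuine gap is the existence of a perfect matching in $(1)\Rightarrow(3)$, which is the actual content of the Staples--Favaron theorem, and your Gallai--Edmonds sketch does not close it. Write $c$ for the number of components of $G[D]$ and $s$ for the number of singleton components; the decomposition gives $c\ge |A|+2$. Your construction (a maximal independent set inside each component of $G[D]$, one inside $G[C]$, then greedy extension through $A$) has size at most
\begin{equation*}
\sum_{|D_i|\ge 3}\tfrac{|D_i|-1}{2}\;+\;s\;+\;\tfrac{|C|}{2}\;+\;|A|\;\le\; n+s-1,
\end{equation*}
so it contradicts well-coveredness only when $s=0$: the size count you invoke works precisely in the case you treat as routine and fails exactly at the singleton components. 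Your proposed repair is not an argument. Singleton components of $G[D]$ need not be pendant vertices (they may have many neighbours in $A$), so the easy fact that a well-covered graph has no vertex with two degree-one neighbours does not apply to them, and ``then reducing'' is unspecified. A natural fix would be to dominate the undominated singletons by an independent subset of $A$ instead of including them, but such an independent dominating choice inside $A$ need not exist without a further, nontrivial use of the very-well-covered hypothesis; supplying that missing argument (or a different route to the perfect matching, as in the original proofs) is where the real work of the theorem lies. As written, $(1)\Rightarrow(3)$ is unproved.
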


Restricted to graphs without isolated vertices, condition 2 in the above theorem has already appeared as a characterizing condition of well-covered bipartite graphs (by Ravindra in 1977~\cite{MR0469831}) and, more generally, of well-covered triangle-free semi-perfect graphs (by Dean and Zito in 1994~\cite{MR1264476}).

The following lemma will enable us to state the result of Theorem~\ref{thm:Favaron-original} more succinctly in terms of strong cliques.

\begin{lemma}\label{lem:strong}
Let $G$ be a graph and let $uv\in E(G)$. Then, $\{u,v\}$ is a strong clique in $G$ if and only if
$uv$ is not contained in any triangle and every vertex of $N(u)$ is adjacent to every vertex of $N(v)$.
\end{lemma}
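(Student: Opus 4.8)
The plan is to prove both implications of the biconditional directly from the definition of a strong clique, namely that $\{u,v\}$ is strong if and only if every maximal independent set of $G$ contains $u$ or $v$.

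For the forward direction, I would argue by contraposition. Suppose first that $uv$ lies in a triangle, say together with a vertex $w$. Then $w$ is adjacent to both $u$ and $v$, so any independent set containing $w$ is disjoint from $\{u,v\}$; extending $\{w\}$ to a maximal independent set $S$ produces a maximal independent set avoiding the clique $\{u,v\}$, so it is not strong. Next suppose $uv$ lies in no triangle but there exist $x\in N(u)$ and $y\in N(v)$ with $xy\notin E(G)$. Note $x\neq v$ and $y\neq u$ (else $uv$ would be in a triangle, or we would contradict $xy \notin E$ trivially), and $x\neq y$ since $x$ is adjacent to $u$ while... actually $x=y$ would force $x$ adjacent to both $u$ and $v$, creating a triangle, contradiction; so $x\neq y$. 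Then $\{x,y\}$ is an independent set disjoint from $\{u,v\}$ (here I must check $x\neq u$, $y\neq v$, which hold since $G$ is simple), and extending it to a maximal independent set $S$ again avoids $\{u,v\}$. Hence $\{u,v\}$ is not strong.

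For the converse, assume $uv$ is in no triangle and every vertex of $N(u)$ is adjacent to every vertex of $N(v)$; I must show every maximal independent set $S$ meets $\{u,v\}$. Suppose not: $u\notin S$ and $v\notin S$. By maximality of $S$, the vertex $u$ has a neighbor $x\in S$ and the vertex $v$ has a neighbor $y\in S$. Then $x\in N(u)$ and $y\in N(v)$, so by hypothesis $x$ is adjacent to $y$ unless $x=y$; but if $x=y$ then $x$ is a common neighbor of $u$ and $v$, contradicting that $uv$ is in no triangle, and if $x\neq y$ then $x$ adjacent to $y$ contradicts $x,y\in S$ with $S$ independent. Either way we reach a contradiction, so $S$ must contain $u$ or $v$, and $\{u,v\}$ is strong.

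None of the individual steps is genuinely difficult; the only care needed is in the bookkeeping of degenerate cases --- ensuring the auxiliary vertices $x,y,w$ are distinct from $u,v$ and from each other --- which is where a careless argument could slip. I would state these distinctness checks explicitly but briefly. The crux is simply recognizing that the obstruction to $\{u,v\}$ being strong is exactly a maximal independent set that ``escapes'' through a common neighbor (a triangle) or through a non-edge between $N(u)$ and $N(v)$, and that conversely these are the only two ways to escape.
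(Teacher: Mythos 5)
Your proof is correct and takes essentially the same route as the paper's: in both directions the crucial move is extending a small independent set ($\{w\}$ or $\{x,y\}$) each of whose vertices dominates $u$ or $v$ to a maximal independent set disjoint from $\{u,v\}$, and your converse argument is identical to the paper's. The only cosmetic differences are that the paper dispatches the triangle case by observing that a strong clique must be a maximal clique while you argue it directly, and that your parenthetical justification for $x\neq v$ is slightly muddled (the correct reason is simply that $x=v$ together with $y\in N(v)$ would force $xy\in E(G)$), though the claim itself is right.
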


\begin{proof}
Suppose first that $\{u,v\}$ is a strong clique in $G$. Then,
$\{u,v\}$ is a maximal clique and therefore it is not contained in any triangle.
Suppose for a contradiction that some neighbor of $u$, say $x$, is not adjacent to some neighbor of $v$, say $y$.
Then $x\neq v$ and $y\neq u$; however, extending the set $\{x,y\}$ to a maximal independent set of $G$
results in a maximal independent set disjoint from $\{u,v\}$, contradicting the assumption that $\{u,v\}$ is strong.

Conversely, suppose that $uv$ is not contained in any triangle and every vertex of $N(u)$ is adjacent to every vertex of $N(v)$. If $\{u,v\}$ is not strong, then $G$ has a maximal independent set, say $I$,
disjoint from $\{u,v\}$. Since $I$ is maximal, each of $u$ and $v$ have a neighbor in $I$.
Since $uv$ is not contained in any triangle, $u$ and $v$ do not have any common neighbors; therefore
$I$ contains a pair of distinct vertices $x$ and $y$ such that $ux\in E(G)$ and $vy\in E(G)$.
Since $x\in N(u)$ and $y\in N(v)$ are non-adjacent, this contradicts the assumption that
every vertex of $N(u)$ is adjacent to every vertex of $N(v)$.
\end{proof}

Lemma~\ref{lem:strong} implies that a matching $M$ in a graph $G$ satisfies property (P) if and only if every edge of $M$ is a strong clique.
Therefore, Theorem~\ref{thm:Favaron-original} can be equivalently stated as follows.

\begin{theorem}\label{thm:Favaron}
For a graph $G$ without isolated vertices, the following properties are equivalent:
\begin{enumerate}
  \item $G$ is very well-covered.
  \item There exists a perfect matching in $G$ every edge of which is a strong clique.
  \item There exists a perfect matching in $G$, and every edge of every perfect matching in $G$ is a strong clique.
  \item $G$ is localizable, with a partition of its vertex set into strong cliques of size two.
\end{enumerate}
In particular, every very well-covered graph is localizable.
\end{theorem}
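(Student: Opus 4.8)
The plan is to deduce Theorem~\ref{thm:Favaron} directly from Theorem~\ref{thm:Favaron-original}, using Lemma~\ref{lem:strong} as the dictionary between the combinatorial condition (P) and the notion of a strong clique of size two. First I would record the observation already noted after Lemma~\ref{lem:strong}: a matching $M$ in $G$ satisfies property (P) if and only if every edge of $M$, viewed as a clique $\{u,v\}$, is strong. Indeed, property (P) asks, for each $uv\in M$, that $N(u)\cap N(v)=\emptyset$ and that every vertex of $N(u)$ be adjacent to every vertex of $N(v)$; since the common neighbours of $u$ and $v$ are exactly the vertices completing $uv$ to a triangle, the first condition is equivalent to "$uv$ lies in no triangle", and Lemma~\ref{lem:strong} then says the conjunction is precisely the condition for $\{u,v\}$ to be a strong clique.

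With this translation in hand, the equivalence of conditions 1, 2, and 3 of Theorem~\ref{thm:Favaron} becomes a verbatim restatement of the equivalence of conditions 1, 2, and 3 of Theorem~\ref{thm:Favaron-original}. It then remains to fold in condition 4. Here I would observe that the data of a perfect matching $M$ of $G$ all of whose edges are strong cliques is literally the same as the data of a partition of $V(G)$ into strong cliques of size two: a perfect matching is a set of pairwise disjoint edges covering every vertex, hence a partition of $V(G)$ into $2$-element cliques, and conversely any partition of $V(G)$ into cliques of size two is a set of edges meeting every vertex exactly once, i.e.\ a perfect matching. Thus conditions 2 and 4 are equivalent, closing the cycle of equivalences.

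For the concluding sentence I would note that every very well-covered graph has, by definition, no isolated vertices, so Theorem~\ref{thm:Favaron} applies to it; the implication (1)$\Rightarrow$(4) then shows that such a graph admits a partition of its vertex set into (strong) cliques of size two, and in particular is localizable.

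I do not expect a genuine obstacle: the argument is essentially a bookkeeping reduction to the already-quoted characterization of very well-covered graphs. The only points requiring a little care are (i) checking that "$N(u)\cap N(v)=\emptyset$" and "$uv$ lies in no triangle" are interchangeable — immediate, as observed above — and (ii) making sure the hypothesis "without isolated vertices" is not silently lost when specializing to very well-covered graphs, which it is not, since that hypothesis is built into the definition of a very well-covered graph.
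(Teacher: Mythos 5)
Your proposal is correct and matches the paper's own treatment: the paper obtains Theorem~\ref{thm:Favaron} exactly by the remark following Lemma~\ref{lem:strong} (property (P) for a matching is the same as every matching edge being a strong clique) and then restating Theorem~\ref{thm:Favaron-original}, with condition 4 being just a repackaging of condition 2 since a perfect matching of strong edges is a partition of $V(G)$ into strong cliques of size two. No gaps; the argument is the same bookkeeping reduction the paper uses.
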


\section{Hardness results}\label{sec:hard}

The problem of recognizing localizable graphs is the following decision problem: ``Given a graph $G$, is $G$ localizable?'' In this section, we give several hardness proofs for this problem in particular graph classes.

\subsection{On a hardness proof of recognizing well-covered graphs and its implications for problems related to strong cliques}\label{sec:weakly-chordal}

It follows from results of Prisner et al.~\cite{MR1368737} that localizable graphs can be recognized in polynomial time within the class of chordal graphs. A graph is {\it weakly chordal} if neither $G$ nor its complement contain an induced cycle of length at least~$5$. Weakly chordal graphs form an important subclass of perfect graphs generalizing the class of chordal graphs. Sankaranarayana and Stewart~\cite{MR1161178} and Chv\'atal and Slater~\cite{MR1217991} proved the following.

\begin{theorem}\label{thm:wc-rec-NP-hard}
The problem of recognizing well-covered graphs is co-NP-complete, even for weakly chordal graphs.
\end{theorem}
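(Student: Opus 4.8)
The final statement in the excerpt is Theorem~\ref{thm:wc-rec-NP-hard}: recognizing well-covered graphs is co-NP-complete, even for weakly chordal graphs. Let me plan a proof.\textbf{Plan of proof.}
The goal is to show that recognizing well-covered graphs is co-NP-complete even when the input is restricted to weakly chordal graphs. Membership in co-NP is the easy direction: a graph $G$ fails to be well-covered precisely when it has two maximal independent sets of different sizes, so a certificate for a \emph{no}-instance is simply a pair of maximal independent sets $S_1,S_2$ with $|S_1|\neq |S_2|$; one checks in polynomial time that each $S_i$ is independent and maximal (no vertex outside $S_i$ is non-adjacent to all of $S_i$). Hence the non-well-covered graphs are in NP, so the well-covered graphs are in co-NP, and this holds for every graph, in particular for weakly chordal ones.

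\textbf{The hardness reduction.}
For co-NP-hardness, I would reduce from a known NP-hard problem whose \emph{yes}-instances correspond to \emph{non}-well-covered graphs. A natural source is independent-set/clique-type problems on graphs. The standard approach (due to Sankaranarayana--Stewart and, independently, Chv\'atal--Slater) is to take an arbitrary graph $H$ (the instance of, say, \textsc{Independent Set} or a related problem) and build from it a graph $G = f(H)$ that is well-covered if and only if $H$ has \emph{no} independent set of a prescribed size; equivalently, $G$ is non-well-covered iff $H$ has a large independent set. The construction typically augments $H$ with pendant vertices or small gadgets so that (i) there is always a maximal independent set of one fixed size $m$, obtained by a ``canonical'' greedy choice in the gadgets, and (ii) any independent set of the forbidden size in $H$ can be extended to a maximal independent set of $G$ of a strictly larger (or strictly smaller) size, witnessing failure of well-coveredness. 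One then verifies that $f$ is computable in polynomial time.

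\textbf{Forcing the output into the weakly chordal class.}
The crucial extra requirement is that $f(H)$ be weakly chordal, i.e.\ that neither $f(H)$ nor its complement contains an induced $C_k$ for $k\ge 5$. This is where I expect the main difficulty to lie, and it dictates the choice of the source problem: rather than reducing from \textsc{Independent Set} on arbitrary graphs, one reduces from a variant guaranteed to keep the constructed graph inside a well-behaved subclass (for instance, one starts from a bipartite or split-graph-like instance, or uses a gadget built entirely out of cliques and independent sets glued along complete bipartite joins, since such ``co-bipartite-chain''-style constructions are easily seen to be weakly chordal). Concretely, I would (a) pick the source problem so that the ``combinatorial core'' is chordal or bipartite, (b) design the pendant/gadget attachments so that any long induced cycle or its complement would have to pass through the gadget in a way the gadget's clique structure forbids, and (c) check the absence of induced $C_{\ge 5}$ and $\overline{C_{\ge 5}}$ directly, exploiting that all added edges form complete joins or pendant edges. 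Once weak chordality of $f(H)$ is established, combining it with the correctness of the reduction ($f(H)$ well-covered $\iff$ $H$ is a \emph{no}-instance) and with membership in co-NP completes the proof; since the hardness already lives inside weakly chordal graphs, Theorem~\ref{thm:wc-rec-NP-hard} follows. I would cite \cite{MR1161178} and \cite{MR1217991} for the explicit gadgets rather than reproduce them in full.
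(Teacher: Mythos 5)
Your proposal takes essentially the same route as the paper: the paper does not prove this theorem itself but attributes it to Sankaranarayana--Stewart~\cite{MR1161178} and Chv\'atal--Slater~\cite{MR1217991}, exactly the citations you defer to for the hardness gadget, and your co-NP membership argument (a pair of maximal independent sets of different sizes as a certificate for a no-instance) is the standard easy half. One small caveat: the actual reduction from those papers, which is recalled later in this paper in the proof of Theorem~\ref{thm:strong-cliques-hardness}, is from $3$-SAT --- a clique of clause vertices joined to the literal edges $\{x_i,\overline{x_i}\}$, with satisfiability equivalent to failure of well-coveredness --- rather than the Independent-Set-plus-pendant-gadget construction you sketch, so your speculative description of the gadget should not be taken as a substitute for the cited construction.
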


Since weakly chordal graphs are perfect, Corollary~\ref{cor:perfect} implies that
a weakly chordal graph is well-covered if and only if it is localizable. We therefore obtain:

\begin{theorem}\label{thm:weakly-chordal}
The problem of determining whether a given weakly chordal graph is localizable is co-NP-complete.
\end{theorem}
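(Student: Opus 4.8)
The plan is to show that, restricted to weakly chordal graphs, recognizing localizable graphs is exactly the same problem as recognizing well-covered graphs, and then to transfer co-NP-completeness from Theorem~\ref{thm:wc-rec-NP-hard}. Since weakly chordal graphs are perfect, Corollary~\ref{cor:perfect} gives that a weakly chordal graph is localizable if and only if it is well-covered; this equivalence is the crux, and everything else is bookkeeping.

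For co-NP-hardness, I would give a trivial (identity) reduction from the problem ``given a weakly chordal graph $G$, is $G$ well-covered?'', which is co-NP-hard by Theorem~\ref{thm:wc-rec-NP-hard}: the instance $G$ is already weakly chordal, and by the equivalence above $G$ is well-covered if and only if $G$ is localizable, so $G$ is a yes-instance of one problem precisely when it is a yes-instance of the other.

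For membership in co-NP, I would exhibit a short certificate of non-localizability for weakly chordal inputs. If a weakly chordal graph $G$ is not localizable, then by the equivalence it is not well-covered, hence it has two maximal independent sets of different sizes; such a pair can be verified in polynomial time (checking independence, checking maximality via closed neighborhoods, and comparing cardinalities are all polynomial). Thus the complementary problem is in NP and the problem is in co-NP. Combining the two parts yields co-NP-completeness.

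I do not expect a real obstacle here: the theorem is an essentially immediate consequence of Corollary~\ref{cor:perfect} together with the known hardness of recognizing well-covered weakly chordal graphs. The only point meriting a line of justification is the co-NP membership, i.e.\ spelling out that ``not well-covered'' (equivalently, on this class, ``not localizable'') has a polynomially checkable witness.
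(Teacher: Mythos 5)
Your proposal is correct and follows essentially the same route as the paper: it combines the co-NP-completeness of recognizing well-covered weakly chordal graphs (Theorem~\ref{thm:wc-rec-NP-hard}) with the equivalence of well-coveredness and localizability on perfect (hence weakly chordal) graphs from Corollary~\ref{cor:perfect}. Your explicit certificate for co-NP membership (two maximal independent sets of different sizes) is a fine way of spelling out what the paper leaves implicit.
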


Both proofs of Theorem~\ref{thm:wc-rec-NP-hard} from~\cite{MR1161178,MR1217991} are based on a reduction from the $3$-SAT problem.
The input to the $3$-SAT problem is a set $X = \{x_1, \ldots, x_n\}$ of Boolean variables and a collection ${\cal C} = \{C_1,\ldots, C_m\}$ of clauses of length $3$ over $X$. Each clause is a disjunction of exactly three {\it literals}, where a literal is either one of the variables (say $x_i$) or its negation (denoted by $\overline{x_i}$). The task is to determine whether the formula $\varphi = \bigwedge_{i = 1}^mC_i$ is satisfiable, that is, whether there is a truth assignment to the $n$ variables which makes all the clauses simultaneously evaluate to true.
The reduction produces from a given $3$-SAT instance $I$ a weakly chordal graph $G(I)$ such that $G(I)$ is not well-covered if and only if the formula is satisfiable.

We now show that the above approach can be used to derive new hardness results for two natural problems related to strong cliques. The first of these results implies that the obvious certificate for yes instances of the problem of recognizing localizable graphs is most likely not verifiable in polynomial time.

\begin{theoremStrongCliques}[restated]
The following problems are NP-hard, even for weakly chordal graphs:
\begin{enumerate}
  \item The problem of determining whether, given a graph and a partition of its vertex set into cliques, each clique in the partition is strong.
  (This problem is also co-NP-complete.)
  \item The problem of determining whether every vertex of a given graph is contained in a strong clique.
\end{enumerate}
\end{theoremStrongCliques}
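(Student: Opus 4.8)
\emph{Plan.} Both items are obtained by looking more closely into the reduction behind Theorem~\ref{thm:wc-rec-NP-hard} and feeding its output into Proposition~\ref{prop:alpha-chi-bar}. Recall that this reduction produces, in polynomial time, from a $3$-SAT instance $I$ a weakly chordal graph $G(I)$ that fails to be well-covered if and only if the formula $\varphi$ is satisfiable. The extra feature we exploit is that $G(I)$, being weakly chordal, is perfect, hence semi-perfect, and therefore has an $\alpha$-clique cover; such a cover $\mathcal{C}(I)$ can be produced in polynomial time --- either read off the gadgets that make up $G(I)$, or computed by any polynomial-time algorithm for minimum clique cover in perfect graphs. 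By Corollary~\ref{cor:semi-perfect} applied to $G(I)$, the three conditions ``$G(I)$ is well-covered'', ``$G(I)$ is localizable'', and ``every clique of $\mathcal{C}(I)$ is strong'' are pairwise equivalent; this equivalence is the engine of item~1.

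For item~1, membership in co-NP is clear: a ``no'' instance $(H,\mathcal{P})$ is certified in polynomial time by a clique $C\in\mathcal{P}$ together with a maximal independent set of $H$ disjoint from $C$. For hardness, the polynomial-time map $I\mapsto(G(I),\mathcal{C}(I))$ does the job: by the equivalence above, every clique of $\mathcal{C}(I)$ is strong if and only if $G(I)$ is well-covered, that is, if and only if $\varphi$ is unsatisfiable; hence this is a reduction from the complement of $3$-SAT, and item~1's problem is co-NP-complete. (A related modification of the same reduction reproves Zang's theorem that recognizing strong cliques is co-NP-complete.)

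For item~2 I would give a Chv\'atal--Slater/Zang-style reduction from $3$-SAT, building a weakly chordal graph $H(I)$ with a distinguished vertex $z$ whose maximal cliques are in bijection with the truth assignments of $\varphi$, together with a carefully chosen family of maximal independent sets --- organized into one block per clause --- designed so that a maximal clique through $z$ is strong if and only if the corresponding assignment satisfies every clause of $\varphi$; every other vertex of $H(I)$ is placed in a pendant edge, hence lies in a strong clique by Lemma~\ref{lem:strong} (one uses pendant edges here, rather than Corollary~\ref{cor:semi-perfect}, because $H(I)$ itself need not be well-covered). It would follow that $\varphi$ is satisfiable if and only if some clique through $z$ is strong, if and only if every vertex of $H(I)$ lies in a strong clique, which gives item~2. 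The step I expect to be the main obstacle is precisely the design of $H(I)$: the clause-blocks must detect falsified clauses \emph{exactly}; one must rule out every spurious maximal independent set that is disjoint from a clique through $z$ which ought to be strong (in particular, the pendant edges must not create such a set); and both $H(I)$ and its complement must be kept free of induced cycles of length at least five. By comparison, once $\mathcal{C}(I)$ is in hand, item~1 is routine.
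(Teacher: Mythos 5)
Your treatment of item~1 is essentially the paper's argument, packaged slightly more abstractly. The paper uses the same graph $G(I)$ but works with the explicit gadget partition $\{C,\{x_1,\overline{x_1}\},\ldots,\{x_n,\overline{x_n}\}\}$, checking directly (assuming no clause contains a complementary pair) that each literal edge is strong and that $C$ is strong if and only if $\varphi$ is unsatisfiable; you obtain the same equivalence wholesale from Corollary~\ref{cor:semi-perfect}, applied to the semi-perfect graph $G(I)$ together with an $\alpha$-clique cover $\mathcal{C}(I)$. That is legitimate: since $G(I)$ is perfect, a minimum clique cover (necessarily an $\alpha$-clique cover) is polynomially computable, or one may simply take the gadget partition. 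So item~1, including co-NP-completeness, goes through.

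Item~2 is where the genuine gap lies: you do not give the reduction. What you offer is a list of desiderata for a hypothetical graph $H(I)$ (maximal cliques through a vertex $z$ in bijection with truth assignments, clause blocks detecting falsified clauses exactly, no spurious maximal independent sets, weak chordality of $H(I)$ and of its complement), and you explicitly defer precisely the step that would constitute the proof; as written, nothing is established for item~2. The paper's actual argument needs no new gadget at all. It reuses $G(I)$, after further restricting the $3$-SAT instances so that no variable $x_i$ has the property that every clause contains $x_i$ or $\overline{x_i}$ (such instances reduce to two $2$-SAT instances, so $3$-SAT stays NP-complete under this restriction). With this assumption, any clique $K$ meeting both $L$ and $C$ has $K\cap L=\{\ell\}$ for a single literal $\ell$, and choosing a clause $c_j$ containing neither $\ell$ nor $\overline{\ell}$ yields the independent set $\{\overline{\ell},c_j\}$ in which every vertex of $K$ has a neighbor; extending it gives a maximal independent set disjoint from $K$, so no clique mixing $L$ and $C$ is strong. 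Since each literal vertex lies in the strong edge $\{\ell,\overline{\ell}\}$, every vertex of $G(I)$ lies in a strong clique if and only if the clause clique $C$ is strong, i.e., if and only if $\varphi$ is unsatisfiable. (Incidentally, your sketch aims at the opposite polarity, ``satisfiable iff every vertex lies in a strong clique''; that would also suffice for the hardness claim, but it is a further indication that the construction you would need is not the one you have in hand.) Without an explicit $H(I)$ and a verification of the properties you list, item~2 remains unproven.
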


\begin{proof}
Let us first recall the reduction used in the proofs of Theorem~\ref{thm:wc-rec-NP-hard} from~\cite{MR1161178,MR1217991}.
Given an input $(X,{\cal C})$ to the $3$-SAT problem as above, representing a formula $\varphi$,
a graph $G = G(X,{\cal C})$ is constructed with vertex set $V(G) = C\cup L$ where $C = \{c_1,\ldots, c_m\}$ forms a clique, $L = \{x_1,\overline{x_1}, \ldots, x_n,\overline{x_n}\}$,
for each $i\in \{1,\ldots, n\}$, vertices $x_i$ and $\overline{x_i}$ are adjacent,
for each $c_i\in C$ and each literal $\ell\in L$, $\{c_i,\ell\}\in E(G)$ if and only if clause $C_i$ contains literal $\ell$, and there are no other edges. Then, $\varphi$ is satisfiable if and only if $G$ is not well-covered~\cite{MR1161178,MR1217991}.

We use a slight modification of the above reduction. First, note that the $3$-SAT problem remains NP-complete on instances in which no clause contains a pair of the form $\{x_i,\overline{x_i}\}$. Consequently, we may assume that none of the cliques $\{x_i,\overline{x_i}\}$ in $G$ is contained in the neighborhood of a vertex in $C$. Since $N(\{x_i,\overline{x_i}\})$ is a clique, it follows that each clique $\{x_i,\overline{x_i}\}$ is strong in $G$.

We claim that the formula $\varphi$ is satisfiable if and only if the clique $C$ is not strong.
Indeed, on the one hand, literals in $L$ that are set to true in a satisfying assignment form a maximal independent set disjoint from $C$.
Conversely, if $C$ is not strong then any maximal independent set $I$ disjoint from $C$ necessarily consists of exactly one literal from each
pair $\{x_i,\overline{x_i}\}$; setting all literals in $I$ to true yields a satisfying assignment.

It follows that each of the cliques in the partition $\{C,\{x_1,\overline{x_1}\}, \ldots, \{x_1,\overline{x_n}\}\}$ of $V(G)$
is strong if and only if $C$ is strong, if and only if the formula is not satisfiable. Consequently, the problem of determining whether, given a graph and a partition of its vertex set into cliques, each clique in the partition is strong, is NP-hard. The co-NP-completeness of this problem follows from the observation that a short certificate of a no instance consists of a pair $(K,I)$ where $K$ is one of the cliques in the partition and $I$ is a maximal independent set disjoint from $K$.

We now prove the second statement of the theorem, that is, that the problem of determining whether every vertex of a given weakly chordal graph is contained in a strong clique is NP-hard.
We use another slight modification of the above reduction.
Observe that each vertex $\ell\in L$ is contained in a strong clique (namely the $K_2$ containing the literal $\ell$
and its negation). Moreover, we will now show that we may assume that no clique of $G$
containing both a vertex from $L$ and a vertex from $C$ is strong.
To this end, note that the $3$-SAT problem remains NP-complete on instances in which, in addition to the property that no clause contains a pair of the form $\{x_i,\overline{x_i}\}$ (as above), no variable $x_i$ is such that every clause contains either $x_i$ or $\overline{x_i}$.
Indeed, instances having a variable $x_i$ such that every clause contains either $x_i$ or $\overline{x_i}$ can be solved in polynomial time,
by solving two instances of the (polynomially solvable) $2$-SAT problem corresponding to setting $x_i$ to true (resp., to false) and obtained by
eliminating all clauses containing $x_i$ (resp., $\overline{x_i}$) and deleting $\overline{x_i}$ (resp., $x_i$) from all clauses containing it.
Now, let $K$ be a clique of $G$ containing both a vertex from $L$ and a vertex from $C$, say $\ell\in L\cap K$ and $c_i\in C\cap K$.
Let us denote by $\overline{\ell}$ the literal complementary to $\ell$.
It follows from the above that $\{\ell, \overline{\ell}\}$ is strong and in particular that $\overline{\ell}$ is not in $K$.
Therefore $K\cap L = \{\ell\}$. By the above assumption on the $3$-SAT instance, there is a clause $c_j$ that contains neither $\ell$ not $\overline{\ell}$.
Then $c_j\neq c_i$ and the set $I = \{\overline{\ell}, c_j\}$ is an independent set of $G$ such that every vertex of $K$ has a neighbor in $I$.
Hence, any maximal independent set $I'$ of $G$ with $I\subseteq I'$ is disjoint from $K$, hence $K$ is not strong.

We know that every vertex $\ell\in L$ is contained in a strong clique (namely the $K_2$ containing the literal $\ell$
and its negation), and, since no clique of $G$ containing both a vertex from $L$ and a vertex from $C$ is strong,
a vertex in $C$ is contained in a strong clique if and only if $C$ is strong.
It follows that every vertex of $G$ is contained in a strong clique if and only if $C$ is strong, which, as argued above, is
an NP-hard problem.
\end{proof}

Since a graph $G$ is weakly chordal if and only if its complement $\overline{G}$ is weakly chordal, Theorem~\ref{thm:hard}
has the following consequence.

\begin{corollary}\label{cor:weakly-chordal}
The following problems are NP-hard, even for weakly chordal graphs:
\begin{enumerate}
  \item The problem of determining whether, given a graph and a partition of its vertex set into independent sets, each independent set in the partition is strong. (This problem is also co-NP-complete.)
  \item The problem of determining whether every vertex of a given graph is contained in a strong independent set.
\end{enumerate}
\end{corollary}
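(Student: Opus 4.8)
The plan is to derive the corollary from Theorem~\ref{thm:strong-cliques-hardness} by a routine complementation argument, exploiting the fact that the class of weakly chordal graphs is closed under complementation.

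First I would record the relevant duality. For any graph $G$, the maximal cliques of $G$ are exactly the maximal independent sets of $\overline{G}$, and conversely. Hence a set $S\subseteq V(G)$ is a strong independent set of $G$ if and only if $S$ is a strong clique of $\overline{G}$: in both cases the defining condition is that $S$ meets every maximal clique of $\overline{G}$ (equivalently, every maximal independent set of $G$). Similarly, a partition of $V(G)$ into independent sets is precisely a partition of $V(\overline{G})$ into cliques, and a vertex of $G$ lies in a strong independent set of $G$ if and only if it lies in a strong clique of $\overline{G}$.

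Next I would use the self-complementarity of the class together with the polynomial-time computability of $G\mapsto\overline{G}$. By the duality above, the map $H\mapsto\overline{H}$ carries instances of the first (resp.\ second) problem of Theorem~\ref{thm:strong-cliques-hardness} to equivalent instances of the first (resp.\ second) problem in the statement of the corollary, and it preserves weak chordality in both directions; the same map applied again gives the converse reduction. Thus each of the two problems in the corollary is polynomial-time equivalent to the corresponding problem of Theorem~\ref{thm:strong-cliques-hardness} on weakly chordal inputs, so both are NP-hard on weakly chordal graphs.

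Finally, for the parenthetical assertion in item 1, I would observe that this problem lies in co-NP: given a graph $G$ and a partition $S_1,\ldots,S_k$ of $V(G)$ into independent sets, a no-instance is certified by a pair $(S_i,K)$ with $K$ a maximal clique of $G$ disjoint from $S_i$, which has polynomial size and is checkable in polynomial time. Combined with the NP-hardness just shown, this gives co-NP-completeness. I do not expect any real obstacle here; the only point needing care is confirming that ``strong'' is genuinely self-dual under complementation, i.e.\ that the notions of strong clique and strong independent set from the introduction are exact mirror images — which is immediate from their definitions.
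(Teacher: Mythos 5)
Your proposal is correct and matches the paper's argument: the corollary is obtained from Theorem~\ref{thm:strong-cliques-hardness} by complementation, using that weakly chordal graphs are closed under taking complements and that strong independent sets of $G$ are exactly the strong cliques of $\overline{G}$, with the co-NP certificate for item 1 being a maximal clique disjoint from one of the independent sets. (The paper states this in one line, citing closure under complementation; your write-up simply makes the same reduction explicit.)
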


To put the results of Theorem~\ref{thm:strong-cliques-hardness} and its corollary in a broader context, let us recall the following facts related to the theorem and its proof:
 \begin{itemize}
\item The proof of Theorem~\ref{thm:strong-cliques-hardness} given above also gives an alternative proof of the fact that the problem of testing whether a given clique in a graph is strong is co-NP-complete, as shown by Zang~\cite{MR1344757}.
\item A graph $G$ is said to be {\em very strongly perfect} if in every induced subgraph of $G$, each vertex belongs to a strong independent set.
The fact that the problem of determining whether every vertex of a given graph is contained in a strong independent set is NP-hard
contrasts with the fact that the class of very strongly perfect graphs can be recognized in polynomial time. This follows from the
result of Ho{\`a}ng~\cite{MR888682} showing that the class of very strongly perfect graphs coincides
with the class of {\it Meyniel} graphs~\cite{MR0439682} (defined as graphs in which every odd cycle of length at least $5$
has at least two chords) and a polynomial time recognition algorithm for the class of
Meyniel graphs due to Burlet and Fonlupt~\cite{MR778765}.

    \item Another related problem is that of determining whether a given graph contains a strong clique. As shown by Ho{\`a}ng, this problem is NP-hard~\cite{MR1301855}.
   \item The complexity status of each of the following three problems is unknown:
   %\begin{enumerate}[--]
   %  \item
   (i) the problem of recognizing {\em strongly perfect graphs}~\cite{MR715895,MR778749}, introduced by Berge as graphs every induced subgraph of which has a strong independent set,
   %  \item
   (ii) the problem of determining whether {\it every} edge of a given graph is contained in a strong clique or, equivalently, the problem of recognizing {\it general partition graphs} (see, e.g.,~\cite{MR1212874,MR2794315,MR2080087}), and
   %  \item
   (iii) the problem of recognizing {\em CIS graphs}, defined as graphs in which {\it every} maximal clique is strong, or, equivalently, as graphs in which every maximal independent set is strong (see, e.g.,~\cite{MR2489416,MR2496915,MR2755907,MR3141630,MR3278773}).
   %\end{enumerate}
\end{itemize}

\subsection{Recognizing localizable graphs with small independence number}\label{sec:k-loc}

Graphs appearing in the hardness proofs of Section~\ref{sec:weakly-chordal} may have arbitrarily large
independence number. We now show that the problem of recognizing localizable graphs is hard already for graphs of independence number $3$ (and more generally, for graphs of independence number $k$, for every fixed $k\ge 3$).
Recall that a graph is $k$-localizable if its vertex set can be partitioned into $k$ strong cliques, and that, by Proposition~\ref{prop:alpha-chi-bar}, a graph is localizable if and only if it is $k$-localizable for $k = \alpha(G)$.

\begin{theorem}\label{prop:k-localizable}
The problem of recognizing $k$-localizable graphs is polynomially solvable for \hbox{$k\in \{1,2\}$}, and NP-complete for all $k\ge 3$.
\end{theorem}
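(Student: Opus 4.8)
The plan is to reduce $k$-localizability to a conjunction of three easily checkable properties, dispatch the cases $k\le 2$ directly, observe membership in NP for every fixed $k$, and then prove NP-hardness for $k\ge 3$ by a reduction from graph colouring. The key observation underlying everything is the following reformulation: for every graph $G$ and every $k\ge 1$, the graph $G$ is $k$-localizable if and only if $\alpha(G)=k$, $\theta(G)=k$, and $G$ is well-covered. Indeed, if $C_1,\dots,C_k$ is a partition of $V(G)$ into strong cliques, then the computation in the proof of the implication (a)$\Rightarrow$(b) of Proposition~\ref{prop:alpha-chi-bar} shows that every maximal independent set of $G$ meets each $C_i$ in exactly one vertex, so $G$ is well-covered with $\alpha(G)=k$, and then $k=\alpha(G)\le\theta(G)\le k$; conversely, if $\alpha(G)=\theta(G)=k$ and $G$ is well-covered, then $G$ is semi-perfect and well-covered, hence $k$-localizable by the implication (d)$\Rightarrow$(b) of Proposition~\ref{prop:alpha-chi-bar}.

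\emph{Small $k$ and membership in NP.} For $k=1$ the reformulation says exactly that $G$ is complete. For $k=2$, using $\alpha(G)=\omega(\overline G)$, $\theta(G)=\chi(\overline G)$, and the fact that a graph of independence number $2$ is well-covered if and only if it has no vertex adjacent to all others, the reformulation becomes: $G$ is $2$-localizable if and only if $\overline G$ is bipartite, has at least one edge, and has no isolated vertex; both characterizations are testable in polynomial time. For membership in NP with $k$ fixed, note that if $G$ is $k$-localizable then $\alpha(G)=k$, so $G$ has only $O(n^{k})$ independent sets of size at most $k$; hence, given a candidate partition of $V(G)$ into $k$ nonempty cliques (which already forces $\alpha(G)\le k$), one can list all maximal independent sets of $G$ in polynomial time and check that each clique of the partition meets every one of them, which by definition of a strong clique accepts precisely the yes-instances.

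\emph{NP-hardness for $k\ge 3$.} I would reduce from the problem of $k$-colouring a triangle-free graph without isolated vertices, which is NP-complete for every $k\ge 3$ (classical for $k=3$; the general case is due to Maffray and Preissmann). Given such a graph $H'$ with at least one edge, build $H$ by keeping all of $H'$ and adding, for each edge $uv$ of $H'$, a fresh set of $k-2$ vertices which together with $u$ and $v$ induce a clique $K_k$, distinct edges receiving disjoint fresh sets. Two claims carry the reduction. (i) Every maximal clique of $H$ has size exactly $k$: since $H'$ is triangle-free, any clique of $H$ either lies inside a single gadget (size $\le k$) or consists only of original vertices (size $\le 2$); every original edge lies in the $K_k$ of its gadget, every isolated vertex of $H$ would be an isolated vertex of $H'$, and each gadget $K_k$ is maximal. (ii) $\chi(H)\le k$ if and only if $\chi(H')\le k$: monotonicity under induced subgraphs gives one direction, and a proper $k$-colouring of $H'$ extends over each gadget because the two colours used on $u$ and $v$ differ, leaving the remaining $k-2$ colours free for the fresh vertices. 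Now put $G=\overline H$. Then $\alpha(G)=\omega(H)=k$, the graph $G$ is well-covered (its maximal independent sets are the maximal cliques of $H$, all of size $k$), and $\theta(G)=\chi(\overline G)=\chi(H)\ge\omega(H)=k$. By the reformulation, $G$ is $k$-localizable if and only if $\theta(G)=k$, if and only if $\chi(H)\le k$, if and only if $H'$ is $k$-colourable; hence recognizing $k$-localizable graphs is NP-hard, and together with membership in NP it is NP-complete.

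\emph{Main obstacle.} The technical heart is the construction in the last step: the gadget must simultaneously pin the independence number of the complement to $k$ and make that complement well-covered — equivalently, make every maximal clique of $H$ have size exactly $k$ — while being loose enough that $k$-colourability is preserved. Triangle-freeness of the source graph is exactly what keeps the clique structure of $H$ under control, so that padding only the (polynomially many) edges suffices, and the $K_k$-padding both rigidifies the maximal clique sizes and leaves precisely $k-2$ colours free on each gadget. Verifying carefully that no clique of size other than $k$ sneaks in — for instance from an original vertex lying in several gadgets — is the main point to check.
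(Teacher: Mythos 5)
Your proof is correct: the reformulation ($G$ is $k$-localizable iff $\alpha(G)=\theta(G)=k$ and $G$ is well-covered) follows from Proposition~\ref{prop:alpha-chi-bar} exactly as you say, your treatment of $k\le 2$ and of NP-membership coincides with the paper's, and your hardness reduction is sound (the per-edge $K_k$ gadgets on a triangle-free input do force all maximal cliques of $H$ to have size exactly $k$, so $\overline H$ is automatically well-covered with $\alpha=k$, and $k$-localizability of $\overline H$ collapses to $\chi(H)\le k$, i.e.\ to $k$-colourability of $H'$). The overall strategy is the same as the paper's -- reduce $k$-colouring to $k$-localizability of the complement of a padded graph -- but the instantiation differs: the paper reduces from $k$-{\sc Colorability} on arbitrary graphs, first checking $\omega(G)\le k$ (which keeps the number of maximal cliques polynomial for fixed $k$) and then padding \emph{every maximal clique} up to size $k$ with new vertices, whereas you reduce from $k$-colourability of triangle-free graphs and pad \emph{every edge} into a $K_k$. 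Your route therefore leans on an extra external ingredient, the NP-completeness of $k$-colourability for triangle-free graphs for every fixed $k\ge 3$ (Maffray--Preissmann; for $k>3$ one can also lift $k=3$ via Mycielskians, which preserve triangle-freeness), while the paper's route avoids any restriction on the source instances at the cost of enumerating maximal cliques. Amusingly, your gadget is precisely the construction the paper uses later, in the proof of Theorem~\ref{thm:loc-co-wc-hard}, for the case $k=3$; your argument generalizes it to all $k\ge 3$. One cosmetic remark: attributing the $k=3$ triangle-free case as ``classical'' is a bit loose -- it is also due to Maffray and Preissmann (the paper cites Kami\'nski and Lozin for it) -- but this does not affect correctness.
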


\begin{proof}
Clearly, a graph is $1$-localizable if and only if it is complete. Also, it is easy to see that a graph $G$ is $2$-localizable if and only if $\overline{G}$ is a bipartite graph without isolated vertices. Therefore, $k$-localizable graphs can be recognized in polynomial time for $k\le 2$.

Suppose now that $k\ge 3$. To show membership of the problem in NP, observe that for fixed $k$, given a partition of the vertex set of a graph into $k$ cliques, we can test in polynomial time whether each of these cliques is strong. Indeed, the existence of such a partition implies that any independent set has at most $k$ vertices, so we can enumerate all the maximal independent sets of $G$ in polynomial time, and check for each of them if it intersects each of the cliques in the partition (equivalently, if it is of size $k$).

To show hardness, we make a reduction from the NP-hard $k$-{\sc Colorability} problem: ``Given a graph $G$, is $\chi(G)\le k$?''.
Let $G$ be an input graph to $k$-{\sc Colorability}. We may assume that $\omega(G)\le k$, since otherwise $G$ is not $k$-colorable (as $k$ is fixed, this condition can be tested in polynomial time). We construct a new graph $G'$ (containing $G$ as induced subgraph) by adding, for every maximal clique $C$ of $G$, a clique of $k-|C|$ new vertices and make each of these new vertices adjacent to all vertices of $C$.
By construction, every maximal clique in $G'$ is of size $k$.

To complete the proof, we will show that $G$ is $k$-colorable if and only if the complement of $G'$ is $k$-localizable.
Suppose that $G$ is $k$-colorable, and let $c$ be a $k$-coloring of $G$.
Extend $c$ to a $k$-coloring $c'$ of $G'$. The color classes of $c'$ define a partition of the vertex set
of the complement of $G'$ into cliques $C_1,\ldots, C_k$. Since all maximal cliques of $G'$ are of size $k$, every color class contains a vertex
of each maximal clique, which means that each $C_i$ is a strong clique in the complement of $G'$.
Conversely, if the complement of $G'$ admits a partition of its vertex set into strong cliques, say, $C_1,\ldots, C_k$,
then $G'$ is $k$-colorable, and so is $G$, as an induced subgraph of $G'$.
\end{proof}

Proposition~\ref{prop:alpha-chi-bar} and Theorem~\ref{prop:k-localizable} have the following consequence.

\begin{corollary}\label{cor:alpha}
For every $k\ge 3$, the problem of determining whether a given graph $G$ with $\alpha(G)=k$ is localizable is NP-complete.
\end{corollary}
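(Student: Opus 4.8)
The plan is to combine Theorem~\ref{prop:k-localizable} with the equivalence (a)$\Leftrightarrow$(b) of Proposition~\ref{prop:alpha-chi-bar}, after observing that the graphs produced by the reduction in the proof of Theorem~\ref{prop:k-localizable} already have independence number exactly $k$.

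For membership in NP, fix $k\ge 3$ and let $G$ be a graph with $\alpha(G)=k$. By Proposition~\ref{prop:alpha-chi-bar}, $G$ is localizable if and only if it is $k$-localizable, so a natural certificate is a partition of $V(G)$ into $k$ cliques. Such a certificate is verifiable in polynomial time exactly as in the proof of Theorem~\ref{prop:k-localizable}: since $\alpha(G)=k$, the graph has at most $\sum_{j=0}^{k}\binom{n}{j}=O(n^k)$ independent sets, so all maximal independent sets can be listed in polynomial time, and it then suffices to check that each of the $k$ cliques meets every maximal independent set. (For fixed $k$ one can also test the promise $\alpha(G)=k$ in polynomial time, so the problem is well-defined as a language.)

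For NP-hardness, I would reuse verbatim the reduction from $k$-{\sc Colorability} given in the proof of Theorem~\ref{prop:k-localizable}. From a graph $G$ with $\omega(G)\le k$ it produces a graph $G'$, containing $G$ as an induced subgraph, in which every maximal clique has size $k$; assuming $V(G)\neq\emptyset$, the graph $G'$ contains a clique of size $k$, so $\omega(G')=k$ and hence $\alpha(\overline{G'})=\omega(G')=k$. By Proposition~\ref{prop:alpha-chi-bar}, (a)$\Leftrightarrow$(b), the graph $\overline{G'}$ is localizable if and only if it is $\alpha(\overline{G'})$-localizable, i.e., $k$-localizable; and the proof of Theorem~\ref{prop:k-localizable} shows that this holds if and only if $G$ is $k$-colorable. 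Thus $G\mapsto\overline{G'}$ is a polynomial-time reduction from $k$-{\sc Colorability} to the problem of deciding localizability of graphs of independence number $k$, which gives NP-hardness; together with the previous paragraph it yields NP-completeness.

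Since all the ingredients are already established, there is no substantial obstacle here; the only point that needs a moment's care is checking that $\alpha(\overline{G'})$ equals $k$ rather than merely being at most $k$. This is where the property that \emph{every} maximal clique of $G'$ has size precisely $k$ (and that at least one such clique exists whenever $G$ is nonempty) is used; the empty graph is trivially $k$-colorable and can be dismissed as a degenerate case.
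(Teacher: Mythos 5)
Your proposal is correct and follows essentially the same route as the paper: the corollary is obtained by combining Proposition~\ref{prop:alpha-chi-bar} with the reduction in Theorem~\ref{prop:k-localizable}, noting that the graphs $\overline{G'}$ produced there have all maximal cliques of $G'$ of size $k$ and hence independence number exactly $k$. Your extra remarks on NP membership and the degenerate empty-graph case are fine but not needed beyond what the paper already states.
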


Note that the graphs in the above reduction for which ($k$-)localizability is tested are well-covered.

To put the result of Corollary~\ref{cor:alpha} in perspective, observe that for every fixed $k$, testing if a given graph $G$ with $\alpha(G)\le k$ is well-covered can be done in polynomial time by enumerating all the $O(|V(G)|^k)$ independent sets
and comparing any pair of maximal ones with respect to their cardinality.

\subsection{Complements of line graphs of triangle-free graphs}

We continue by pointing out a connection between localizable graphs and edge colorings, which implies that
the problem of recognizing localizable graphs is hard also for complements of line graphs of triangle-free graphs.
First we recall some definitions and notation. The {\it line graph} of a graph $G$ is the graph $L(G)$ with vertex set $E(G)$, in which two distinct vertices are adjacent if and only if they have a common endpoint as edges of $G$. The minimum (resp.~maximum) degree of a vertex in a graph $G$ is denoted by $\delta(G)$ (resp.,~$\Delta(G)$). A graph is {\it $k$-regular} if $\delta(G) = \Delta(G) = k$, and {\it regular} if it is $k$-regular for some $k$.
A {\it matching} $M$ in a graph $G$ is a set of pairwise disjoint edges.
The {\it chromatic index} of a graph $G$ is denoted by $\chi'(G)$ and defined as the smallest number of matchings of $G$
the union of which is $E(G)$. Every graph $G$ has $\chi'(G)\in \{\Delta(G),\Delta(G)+1\}$ and graphs with $\chi'(G) = \Delta(G)$ are said to be of Class~$1$.

\begin{lemma}\label{lem:complements-of-line-graphs}
Let $k\ge 2$, let $H$ be a triangle-free $k$-regular graph, and let $G = \overline{L(H)}$.
Then, $G$ is localizable, if and only if $G$ is $k$-localizable, if and only if $\chi'(H) = k$.
\end{lemma}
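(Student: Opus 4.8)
The plan is to translate both the hypothesis and the conclusion into statements about $H$ via the structure of line graphs. The key point is that a partition of $V(G)=E(H)$ into strong cliques of $G$ is nothing but a partition of $E(H)$ into perfect matchings of $H$, which for a $k$-regular graph amounts to a proper $k$-edge-colouring.

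First I would analyse the cliques of $L(H)$. A clique of $L(H)$ is a set of pairwise intersecting edges of $H$, and a routine argument using triangle-freeness shows that such a set consists of edges sharing a common endpoint; hence every maximal clique of $L(H)$ is a star $S_v := \{e\in E(H) : v\in e\}$ for some $v\in V(H)$. Since $H$ is $k$-regular with $k\ge 2$, each $S_v$ has exactly $k$ elements, and for distinct $v,w$ neither of $S_v,S_w$ contains the other; so the maximal cliques of $L(H)$ are exactly the sets $S_v$, $v\in V(H)$, each of size $k$. Dualizing, the maximal independent sets of $G=\overline{L(H)}$ are precisely the sets $S_v$, $v\in V(H)$. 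In particular $\alpha(G)=\omega(L(H))=k$, so by Proposition~\ref{prop:alpha-chi-bar} (equivalence of (a) and (b)) the graph $G$ is localizable if and only if it is $k$-localizable. It thus remains to show that $G$ is $k$-localizable if and only if $\chi'(H)=k$.

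Next I would characterize the strong cliques of $G$. A clique of $G$ is an independent set of $L(H)$, i.e.\ a matching of $H$. By the description of the maximal independent sets of $G$ above, a matching $M$ of $H$ is a strong clique of $G$ if and only if $M\cap S_v\neq\emptyset$ for every $v\in V(H)$, i.e.\ if and only if every vertex of $H$ is covered by $M$ — that is, if and only if $M$ is a perfect matching of $H$. Consequently, a partition of $V(G)=E(H)$ into $k$ strong cliques of $G$ is exactly a partition of $E(H)$ into $k$ perfect matchings of $H$, so $G$ is $k$-localizable if and only if $E(H)$ can be partitioned into $k$ perfect matchings.

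Finally I would record that, for a $k$-regular graph $H$, the edge set $E(H)$ admits a partition into $k$ perfect matchings if and only if $\chi'(H)=k$. The forward direction is immediate, since $k$ perfect matchings whose union is $E(H)$ witness $\chi'(H)\le k$, while $\Delta(H)=k$ forces $\chi'(H)\ge k$. For the converse, a proper $k$-edge-colouring of $H$ partitions $E(H)$ into $k$ matchings; since each vertex has degree $k$ and the $k$ edges incident with it receive the $k$ distinct colours, each colour appears exactly once at every vertex, so each colour class is a perfect matching. Combining these equivalences proves the lemma. I do not expect a serious obstacle here: the only points requiring care are the use of triangle-freeness (to guarantee that the maximal cliques of $L(H)$, and hence the maximal independent sets of $G$, are exactly the stars) and of $k$-regularity (used both to obtain $\alpha(G)=k$ and in the last equivalence); everything else is a matter of unwinding definitions.
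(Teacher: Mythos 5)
Your proof is correct, and it shares the same opening analysis as the paper (maximal cliques of $L(H)$ are exactly the stars, using triangle-freeness, $k$-regularity and $k\ge 2$; hence $\alpha(G)=k$ and, by Proposition~\ref{prop:alpha-chi-bar}, localizable is equivalent to $k$-localizable), but it then diverges in how it connects $k$-localizability to $\chi'(H)$. The paper notes that the star structure makes $G$ well-covered, so by Proposition~\ref{prop:alpha-chi-bar} localizability of $G$ is equivalent to semi-perfection, i.e.\ to $\theta(G)=k$, and then finishes in one line with the identity $\theta(G)=\chi(\overline{G})=\chi(L(H))=\chi'(H)$; in this route one never needs to know what the strong cliques of $G$ actually are, since the well-covered case of the proposition guarantees that every $\alpha$-clique cover automatically consists of strong cliques. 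You instead characterize the strong cliques of $G$ explicitly as the perfect matchings of $H$ (a clique of $G$ is a matching of $H$, and it meets every star iff it covers every vertex), so $k$-localizability becomes a partition of $E(H)$ into $k$ perfect matchings, and you then use the elementary fact that for a $k$-regular graph this is the same as $\chi'(H)=k$ because every colour class of a proper $k$-edge-colouring is forced to be perfect. Both arguments are short and rest on the same structural facts; the paper's is slightly slicker because it outsources the ``colour classes are strong cliques'' step to Proposition~\ref{prop:alpha-chi-bar}, while yours is more self-contained and yields as a by-product the explicit description of the strong cliques of $\overline{L(H)}$, at the cost of the extra (easy) observation about colour classes in regular graphs.
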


\begin{proof}
The maximal cliques of the line graph of an arbitrary graph $F$ are exactly the inclusion-wise maximal elements
in ${\cal C} = {\cal T}\cup {\cal S}$, where ${\cal T}$ is the set of (all edge sets of) triangles of $F$ and
${\cal S}$ is the set of all {\it stars} of $F$, that is, sets of edges of the form $E(v) = \{e\in E(F): v$ is an endpoint of $e\}$ for $v\in V(F)$.
Since $H$ is triangle-free, it follows that the maximal cliques of $L(H)$ are exactly the inclusion-wise maximal stars of $H$.
Moreover, since $H$ has no vertices of degree $0$ or $1$, no two stars of $H$ are comparable with respect to
set inclusion, which implies that
the maximal cliques of $L(H)$ are exactly the stars of $H$. Since $H$ is $k$-regular, all
maximal cliques of $L(H)=\overline{G}$ are of size $k$ and consequently $G$ is a well-covered graph with $\alpha(G) = k$.
Therefore, by Proposition~\ref{prop:alpha-chi-bar} $G$ is localizable,
if and only $G$ is $k$-localizable, if and only if
  $\theta(G)= k$.
The statement now follows from the fact that
$\theta (G) = \chi (\overline G) = \chi(L(H)) = \chi'(H)$ is the chromatic index of $H$.
\end{proof}

\begin{theorem}\label{thm:complements-of-line-of-triangle-free}
For every $k\ge 3$, testing whether a given graph that is the complement of the line graph of a triangle-free $k$-regular graph is $k$-localizable (resp.,~localizable) is NP-complete.
\end{theorem}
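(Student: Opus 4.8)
The plan is to transfer the statement, via Lemma~\ref{lem:complements-of-line-graphs}, to a hardness result about edge colourings, so that the whole argument reduces to invoking (and, where needed, mildly strengthening) the NP-completeness of edge $k$-colourability. Recall that Lemma~\ref{lem:complements-of-line-graphs} says that for a triangle-free $k$-regular graph $H$ the graph $G=\overline{L(H)}$ is localizable if and only if it is $k$-localizable if and only if $\chi'(H)=k$, and that (from the proof of that lemma) $G$ is well-covered with $\alpha(G)=k$. Hence the two decision problems in the statement coincide on the promised class of inputs, and the map $H\mapsto G=\overline{L(H)}$, which is computable in polynomial time since $|V(G)|=|E(H)|=k|V(H)|/2$, reduces the problem of deciding whether a triangle-free $k$-regular graph is of Class~$1$ to the problem in the theorem; this will be exactly the reduction used for hardness.

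For membership in NP: on the promised inputs $G=\overline{L(H)}$ we have $\alpha(G)=k$, so by Proposition~\ref{prop:alpha-chi-bar} localizability of $G$ is equivalent to $k$-localizability, and the latter lies in NP (Theorem~\ref{prop:k-localizable}): a guessed partition of $V(G)$ into $k$ cliques can be checked to consist of strong cliques in polynomial time because every independent set of $G$ has at most $k=\alpha(G)$ vertices, so all maximal independent sets can be enumerated in polynomial time and tested against the partition. (Equivalently, one may first recover $H$ from $G$ by line-graph reconstruction and take a proper edge $k$-colouring of $H$ as the certificate.)

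For NP-hardness it therefore suffices to know that, for every fixed $k\ge 3$, it is NP-complete to decide whether a triangle-free $k$-regular graph $H$ satisfies $\chi'(H)=k$; then, given such an $H$, the reduction outputs $G=\overline{L(H)}$ and appeals to Lemma~\ref{lem:complements-of-line-graphs}. For $k=3$ this is the NP-completeness of $3$-edge-colourability for triangle-free cubic graphs, which follows from Holyer's theorem on $3$-edge-colourability of cubic graphs by eliminating triangles: a triangle in a cubic graph forces its three incident ``external'' edges to receive three distinct colours, and it can be replaced by a fixed triangle-free gadget with three terminal edges having exactly this forcing property (a statement for triangle-free graphs of maximum degree three is also available in the literature, and the cubic case then follows by standard padding of the vertices of degree $<3$ with disjoint triangle-free gadgets). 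For $k\ge 4$ one combines the extension of Holyer's reduction to $k$-regular graphs (Leven and Galil) with the same triangle-elimination idea, or lifts a triangle-free cubic hard instance to a triangle-free $k$-regular one; either way, recognizing Class~$1$ triangle-free $k$-regular graphs is NP-complete, which finishes the proof.

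The main obstacle is precisely this last ingredient: making sure that NP-completeness of $k$-edge-colourability survives simultaneously restricting to $k$-regular graphs \emph{and} to triangle-free graphs. The first restriction is classical (Holyer for $k=3$, Leven--Galil for $k\ge 4$); the triangle-free restriction requires exhibiting a small triangle-free gadget that is colour-equivalent to a triangle with respect to proper $3$-edge-colourings, together with (for $k\ge 4$) a degree-padding gadget that does not reintroduce triangles. Checking that such gadgets exist and behave correctly is the only technically delicate point; everything else is bookkeeping on top of Lemma~\ref{lem:complements-of-line-graphs}.
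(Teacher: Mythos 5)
Your reduction is exactly the paper's: map a triangle-free $k$-regular graph $H$ to $G=\overline{L(H)}$ and invoke Lemma~\ref{lem:complements-of-line-graphs} to identify localizability, $k$-localizability, and $\chi'(H)=k$; your NP-membership argument is also fine (both variants work — enumerating maximal independent sets is legitimate since $\alpha(G)=k$ is fixed, and the parenthetical certificate consisting of $H$ plus a proper $k$-edge-colouring is precisely the one the paper uses).

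The one place where your write-up falls short of a proof is the hardness source. Everything hinges on the claim that, for every fixed $k\ge 3$, deciding $\chi'(H)=k$ is NP-complete \emph{already for triangle-free $k$-regular graphs}. The paper obtains this with a single citation to Cai and Ellis~\cite{journals/dam/CaiE91}, who proved exactly this statement. You instead propose to re-derive it from Holyer (for $k=3$) and Leven--Galil (for $k\ge 4$) by replacing triangles with a ``fixed triangle-free gadget with three terminal edges having exactly this forcing property,'' plus degree-padding gadgets for $k\ge 4$ — but you never exhibit these gadgets or verify that they force three distinct colours on the terminal edges while admitting every such assignment, and you acknowledge yourself that this is the delicate point. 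As written, the load-bearing ingredient of the hardness proof is asserted rather than established. The gap is entirely fillable by citation (the Cai--Ellis result is in the literature and is what the paper relies on), but your gadget sketch does not by itself constitute a proof of it; if you want a self-contained argument you must actually construct the triangle-replacement and padding gadgets and prove their colour-transfer properties.
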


\begin{proof}
Cai and Ellis showed in~\cite{journals/dam/CaiE91} that for every $k\ge 3$, it is NP-complete to determine whether a given $k$-regular triangle-free graph
is $k$-edge-colorable. Thus, if $k\ge 3$ and $H$ is a given $k$-regular triangle-free graph, Lemma~\ref{lem:complements-of-line-graphs}
implies that $H$ is $k$-edge-colorable if and only if the graph $G = \overline{L(H)}$ is $k$-localizable,
if and only if $G$ is localizable. The claimed NP-hardness follows. The problem is also in NP since
a polynomial certificate of the fact that $G$ is localizable (resp., $k$-localizable) is given by a graph $H$ such that
$G = \overline{L(H)}$ together with a proper $k$-edge coloring of~$H$.
\end{proof}

We remark that Theorem~\ref{thm:complements-of-line-of-triangle-free} also implies Corollary~\ref{cor:alpha}, which was derived from
Theorem~\ref{prop:k-localizable}. However, we keep Theorem~\ref{prop:k-localizable} and its proof since
in Section~\ref{sec:counterexample} we will expand on the construction given in the proof of Theorem~\ref{prop:k-localizable}.

To put the result of Theorem~\ref{thm:complements-of-line-of-triangle-free} in perspective, note that testing if a given graph is well-covered can be done in polynomial time within the class of complements of line graphs. This follows from the
fact that a graph $H$ such that $G = \overline{L(H)}$ can be computed in polynomial time~\cite{MR0424435} and the following easy observation: for a given connected graph $H$ on at least three vertices, $\overline{L(H)}$ is well-covered if and only if there exists a positive integer $k$ such that
for every vertex $v\in V(H)$, we have $d_H(v)\in \{1,k\}$, with $k = 3$ if $H$ contains a triangle.

\subsection{Proof of Theorem~\ref{thm:hard}}

\begin{theoremHardness}[restated]
The problem of recognizing localizable graphs is:
\begin{itemize}
  \item co-NP-complete in the class of weakly chordal graphs (and consequently in the class of perfect graphs),
  \item NP-complete in the class of complements of line graphs of triangle-free $k$-regular graphs, for every fixed $k\ge 3$,
  \item NP-complete in the class of graphs of independence number $k$, for every fixed $k\ge 3$.
\end{itemize}
\end{theoremHardness}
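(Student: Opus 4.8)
The plan is to assemble the theorem from the three hardness results already established in this section. The statement is literally a restatement of Theorem~\ref{thm:hard}, so the proof is a matter of citing the right intermediate results and adding the (easy) membership arguments for the complexity classes claimed.

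For the first item, I would invoke Theorem~\ref{thm:weakly-chordal}, which already asserts that recognizing localizable graphs is co-NP-complete in the class of weakly chordal graphs; the ``consequently in the class of perfect graphs'' clause follows immediately since every weakly chordal graph is perfect. The co-NP membership here is not automatic (indeed, the excerpt emphasizes that it is not known whether recognizing localizable graphs is in NP in general), so I would recall why it holds in the weakly chordal case: by Corollary~\ref{cor:perfect}, a weakly chordal (hence perfect) graph is localizable if and only if it is well-covered, and non-well-coveredness has the short certificate consisting of two maximal independent sets of different sizes, so the problem is in co-NP. Hardness is the content of Theorem~\ref{thm:wc-rec-NP-hard} transported through Corollary~\ref{cor:perfect}, exactly as in Theorem~\ref{thm:weakly-chordal}.

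For the second item, I would simply cite Theorem~\ref{thm:complements-of-line-of-triangle-free}, which states precisely that for every fixed $k\ge 3$, testing $k$-localizability (equivalently localizability, by Lemma~\ref{lem:complements-of-line-graphs}) of a graph that is the complement of the line graph of a triangle-free $k$-regular graph is NP-complete; the membership in NP was handled there via the certificate $(H, \text{proper $k$-edge-coloring of }H)$. For the third item, I would cite Corollary~\ref{cor:alpha}: for every $k\ge 3$, determining whether a given graph $G$ with $\alpha(G)=k$ is localizable is NP-complete — NP-membership because for fixed $k$ one can enumerate all $O(|V(G)|^k)$ independent sets and check a candidate clique partition, and hardness by the reduction from $k$-\textsc{Colorability} in Theorem~\ref{prop:k-localizable}. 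Collecting these three bullet points completes the proof.

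There is essentially no obstacle: all the work has been done in the preceding subsections, and the ``proof'' is a bookkeeping exercise verifying that the three previously proved statements line up verbatim with the three bullets. The only point deserving a sentence of care is the co-NP membership in the weakly chordal case, since the general status of recognizing localizable graphs (membership in NP or co-NP) is open — so I would make explicit that it is Corollary~\ref{cor:perfect} plus the difference-of-sizes certificate for non-well-coveredness that rescues us there, rather than any general structural property of localizable graphs.
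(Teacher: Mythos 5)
Your proposal is correct and follows exactly the paper's own proof, which simply states that the theorem is a direct consequence of Theorem~\ref{thm:weakly-chordal}, Corollary~\ref{cor:alpha}, and Theorem~\ref{thm:complements-of-line-of-triangle-free}. Your extra remark on co-NP membership in the weakly chordal case (via Corollary~\ref{cor:perfect} and the two-maximal-independent-sets certificate) is a harmless and accurate elaboration of what those cited results already contain.
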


\begin{proof}
The theorem is a direct consequence of Theorem~\ref{thm:weakly-chordal}, Corollary~\ref{cor:alpha} and Theorem~\ref{thm:complements-of-line-of-triangle-free}.
\end{proof}

\begin{sloppypar}
\section{Characterizations}\label{sec:poly}
\end{sloppypar}

In this section, we characterize localizable graphs within the classes of triangle-free graphs, $C_4$-free graphs, cubic graphs, and line graphs.
Our characterizations also imply polynomial time recognition algorithms of localizable graphs within each of these classes.
First, we briefly summarize known results from the literature leading immediately to graph classes in which localizability can be tested in polynomial time. For background on graph classes, we refer to~\cite{MR1686154,MR2063679}.

By Proposition~\ref{prop:alpha-chi-bar}, a graph is localizable if and only if its independent domination and clique cover numbers coincide. Therefore, the class of localizable graphs can be recognized in polynomial time in any class of graphs for which these two parameters are polynomially computable. Examples of such graph classes include the class of circular-arc graphs~\cite{MR1622646,MR1143909} and any class of perfect graphs for which the independent domination problem is polynomially solvable~\cite{MR936633}, for instance chordal graphs~\cite{MR687354}, cocomparability graphs~\cite{MR1229694}, and distance-hereditary graphs~\cite{MR1651039}. More generally, by Corollary~\ref{cor:perfect}, the class of localizable graphs can be recognized in polynomial time in any class of perfect graphs for which well-covered graphs can be recognized in polynomial time, for instance for perfect graphs of bounded degree~\cite{MR1640952}, or for claw-free perfect graphs~\cite{MR1438624,MR1376052}.

Chordal well-covered (equivalently: chordal localizable) graphs were characterized by Prisner et al.~\cite{MR1368737}, as follows. A clique $C$ in a graph $G$ is said to be {\it simplicial} if there exists a vertex $v\in V(G)$ such that $C = N[v]$, the closed neighborhood of $v$. Prisner et al.~showed that a chordal graph is well-covered if and only if each vertex is in a unique simplicial clique. In the same paper~\cite{MR1368737}, Prisner et al.~proved that the same condition characterizes well-covered graphs among {\it simplicial} ones, that is, among graphs in which every vertex is in a simplicial clique: a simplicial graph is well-covered if and only if each vertex is in a unique simplicial clique. Since each simplicial clique is strong, this property implies localizability, and therefore localizable simplicial graphs can also be recognized in polynomial time.

In~\cite[Theorem 4.2]{MR1264476}, Dean and Zito showed that a $C_4$-free semi-perfect graph $G$ is well-covered (equivalently: localizable) if and only if every minimum clique cover ${\cal C}$ of $G$ is a partition of the vertex set and every clique of ${\cal C}$ contains a simplicial vertex. It is not difficult to see that this condition is equivalent to the condition that each vertex is in a unique simplicial clique. Therefore, since every chordal graph is $C_4$-free and semi-perfect, the result of Dean and Zito generalizes the above-mentioned characterization of well-covered (equivalently: localizable) chordal graphs due to Prisner et al.

It is also worth mentioning that well-coveredness (equivalently: localizability) of cocomparability graphs is equivalent to a known (and polynomially verifiable) property of a derived partially ordered set. For a graph $G$, let us denote by ${\cal P}_G$ the set of all partial orders (posets) with ground set $V(G)$ in which two distinct elements are incomparable if and only if they are adjacent in $G$. A graph $G$ is cocomparability if and only if ${\cal P}_G\neq\emptyset$.
A cocomparability graph $G$ is well-covered if and only if some poset in ${\cal P}_G$ is {\it graded} (that is, all its maximal chains are of the same size), if and only if all posets in ${\cal P}_G$ are graded.

%Recognizing cocomparability graphs (and computing a poset in ${\cal P}_G$, if there is one) can be done in polynomial time (see, e.g., \cite{MR0502231}).
 %Computing the minimum, resp., maximum size of a
%It follows that the well-coveredness (equivalently: localizability) of a given cocomparability graph can be tested in polynomial time.}

Next, it follows from results of~\cite{MR2323400,MR1739644,MR2232389} that both the clique cover number and the independent domination number are polynomially computable for graphs of bounded clique-width. Therefore, by Proposition~\ref{prop:alpha-chi-bar} the same conclusion holds for the problem of recognizing if a given graph of small clique-width is localizable. This generalizes the result for distance-hereditary graphs (which are of clique-width at most $3$~\cite{MR1792124}).

We summarize the above observations in the following theorem.

\begin{theorem}
The problem of recognizing localizable graphs is polynomially solvable within each of the following graph classes:
$C_4$-free semi-perfect graphs, cocomparability graphs, perfect graphs of bounded degree, claw-free perfect graphs,
simplicial graphs, and graphs of bounded clique-width.
\end{theorem}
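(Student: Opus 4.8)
The plan is to establish the theorem by going through the six graph classes one at a time, in each case combining a known characterization or algorithmic result recalled above with Proposition~\ref{prop:alpha-chi-bar} or one of its corollaries. The two tools I would use repeatedly are: first, that by Proposition~\ref{prop:alpha-chi-bar} a graph $G$ is localizable if and only if $i(G)=\theta(G)$, so it suffices to have polynomial-time procedures for both parameters; and second, that by Corollary~\ref{cor:perfect} a perfect graph is localizable if and only if it is well-covered, so within a class of perfect graphs it suffices to recognize well-covered graphs. Which of the two routes is more convenient depends on the class.

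For the three classes of perfect graphs in the list --- cocomparability graphs, perfect graphs of bounded degree, and claw-free perfect graphs --- I would invoke Corollary~\ref{cor:perfect} to reduce to recognizing well-covered graphs. For perfect graphs of bounded degree and for claw-free perfect graphs this is done directly by the polynomial-time recognition algorithms of~\cite{MR1640952} and of~\cite{MR1438624,MR1376052}, respectively. For cocomparability graphs $G$, the independent domination number is polynomially computable~\cite{MR1229694}, while $\theta(G)=\chi(\overline{G})$ is polynomially computable because $\overline{G}$ is perfect, so $i(G)=\theta(G)$ can be tested directly; alternatively, one computes a transitive orientation of $\overline{G}$ and checks in polynomial time whether the resulting poset is graded, using the graded-poset reformulation of well-coveredness recalled above.

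For the remaining three classes, perfection is not available and I would rely on the structural descriptions recalled above. For $C_4$-free semi-perfect graphs, Corollary~\ref{cor:semi-perfect} identifies localizability with well-coveredness, and the theorem of Dean and Zito~\cite{MR1264476}, as reformulated above, identifies the latter with the condition that every vertex lies in a unique simplicial clique; this condition is polynomially verifiable, since for each vertex $v$ one can test whether $N[v]$ is a clique and, if so, record its vertices, and then check that each vertex of $G$ is recorded exactly once. For simplicial graphs one has to argue slightly more carefully, because such graphs need not be semi-perfect: the result of Prisner et al.~\cite{MR1368737} gives that a simplicial graph is well-covered if and only if every vertex lies in a unique simplicial clique; this last condition forces the simplicial cliques to form a partition of $V(G)$ into strong cliques (every closed neighborhood meets every maximal independent set, hence is strong), and therefore implies localizability, which in turn implies well-coveredness, so the three properties are equivalent for simplicial graphs and the condition is again polynomially checkable. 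Finally, for graphs of bounded clique-width, the results of~\cite{MR2323400,MR1739644,MR2232389} provide polynomial-time procedures for both $\theta(G)$ and $i(G)$, and localizability then follows from Proposition~\ref{prop:alpha-chi-bar}.

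Most of these steps are routine once the earlier results are in hand. The one place where a little care is needed is the clique-width case, where one must keep track of whether a clique-width expression is supplied with the input; this is harmless because a clique-width expression of bounded width can be computed (or approximated to within a constant factor of the width) in polynomial time for graphs of bounded clique-width, so the algorithms of~\cite{MR2323400,MR1739644,MR2232389} apply. The simplicial case is the other spot deserving an explicit sentence, since there the equivalence between localizability and well-coveredness cannot be read off from Proposition~\ref{prop:alpha-chi-bar} alone and must be obtained from the fact that closed neighborhoods are strong cliques.
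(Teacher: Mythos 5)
Your proposal is correct and takes essentially the same route as the paper, which likewise derives this summary theorem from Proposition~\ref{prop:alpha-chi-bar} and Corollaries~\ref{cor:semi-perfect} and~\ref{cor:perfect} combined with the cited results (polynomial computability of the independent domination number for cocomparability graphs together with the clique cover number via perfection, the well-coveredness recognition algorithms for perfect graphs of bounded degree and for claw-free perfect graphs, the unique-simplicial-clique characterizations of Prisner et al.\ and Dean--Zito for simplicial and $C_4$-free semi-perfect graphs, and the clique-width metatheorems for $\theta$ and $i$). One cosmetic remark: when verifying that every vertex lies in a unique simplicial clique, count distinct simplicial cliques rather than one recorded copy per simplicial vertex (in $K_2$, for instance, both endpoints are simplicial but determine the same clique), which of course does not affect polynomiality.
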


The results of this section allow to extend the above list by adding to it the classes of triangle-free graphs, $C_4$-free graphs, cubic graphs, and line graphs.

\subsection{Triangle-free graphs and $C_4$-free graphs}\label{sec:triangle-free}

Let $G$ be a triangle-free graph without isolated vertices. Since every strong clique is maximal and $G$ is triangle-free, $G$ is localizable if and only if $G$ has a partition of its vertex set into strong cliques of size two. Theorem~\ref{thm:Favaron} thus immediately implies the following.

\begin{theorem}\label{thm:triangle-free}
For a triangle-free graph $G$ without isolated vertices, the following properties are equivalent:
\begin{enumerate}
  \item $G$ is localizable.
  \item $G$ is very well-covered.
  \item There exists a perfect matching in $G$ every edge of which is a strong clique.
  \item There exists a perfect matching in $G$, and every edge of every perfect matching in $G$ is a strong clique.
\end{enumerate}
\end{theorem}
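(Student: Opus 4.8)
The plan is to deduce Theorem~\ref{thm:triangle-free} directly from Theorem~\ref{thm:Favaron} together with the elementary observation stated just before the theorem, namely that in a triangle-free graph every strong clique has size at most two. First I would record that a strong clique is in particular a maximal clique, and that in a triangle-free graph every maximal clique is either a single vertex or a single edge; since $G$ has no isolated vertices, no singleton is a maximal clique, so every strong clique of $G$ is an edge. Hence a partition of $V(G)$ into strong cliques is the same thing as a perfect matching of $G$ all of whose edges are strong cliques. This immediately gives the equivalence of statements~1 and~3: a graph is localizable (statement~1) iff it has such a partition iff it has a perfect matching every edge of which is a strong clique (statement~3).

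Next I would invoke Theorem~\ref{thm:Favaron}, which asserts, for a graph without isolated vertices, the equivalence of ``very well-covered'' with ``there exists a perfect matching every edge of which is a strong clique'' (its condition~2) and with ``there exists a perfect matching, and every edge of every perfect matching is a strong clique'' (its condition~3). Since $G$ here is triangle-free and has no isolated vertices, these are exactly statements~3 and~4 of the present theorem, and Theorem~\ref{thm:Favaron}'s condition~1 is statement~2 here. So the chain $1 \Leftrightarrow 3 \Leftrightarrow 2 \Leftrightarrow 4$ closes up, using the triangle-free reduction for $1\Leftrightarrow 3$ and Theorem~\ref{thm:Favaron} for $2\Leftrightarrow 3\Leftrightarrow 4$.

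There is essentially no obstacle: the only thing to be careful about is the reduction from ``partition into strong cliques'' to ``perfect matching of strong cliques,'' which relies on the fact that strong cliques are maximal (established in the proof of Lemma~\ref{lem:strong}) and on triangle-freeness forcing maximal cliques to have at most two vertices, plus the no-isolated-vertices hypothesis to rule out singletons. Once that is in place the theorem is a one-line citation of Theorem~\ref{thm:Favaron}, so I would keep the proof to two or three sentences: state the size reduction, then say ``the claim now follows from Theorem~\ref{thm:Favaron}.''
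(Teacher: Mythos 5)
Your proposal is correct and follows essentially the same route as the paper: the paper likewise observes that, since strong cliques are maximal and $G$ is triangle-free with no isolated vertices, a partition into strong cliques is exactly a perfect matching of strong edges, and then cites Theorem~\ref{thm:Favaron} to close the equivalences.
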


Benedetti and Varbaro~\cite{MR2821708} studied a property of graphs referred to as the ``matching square condition'', which, for triangle-free graphs without isolated vertices, is equivalent to condition 3 in the Theorem~\ref{thm:triangle-free}. Moreover, Theorem~\ref{thm:triangle-free} and Corollary~\ref{cor:perfect} imply the characterization of well-covered graphs within the class of triangle-free semi-perfect graphs by Dean and Zito~\cite[Theorem 4.3]{MR1264476}. From the algorithmic point of view, the equivalence between the first and the last property in the above list implies that there is a polynomial time algorithm to test if a given triangle-free graph is localizable: After the removal of isolated vertices, one can use Edmonds' algorithm~\cite{MR0177907} to test if the graph has a perfect matching, and if a perfect matching is found, each of its edges is tested for being a strong clique using Lemma~\ref{lem:strong}.

We now turn to a characterization of $C_4$-free localizable graphs. First, we characterize strong cliques in a $C_4$-free graph.

\begin{lemma}\label{lem:C4-free}
A clique in a $C_4$-free graph is strong if and only if it is simplicial.
\end{lemma}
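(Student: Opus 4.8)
The statement to prove is Lemma~\ref{lem:C4-free}: in a $C_4$-free graph $G$, a clique $C$ is strong if and only if it is simplicial (i.e.\ $C = N[v]$ for some vertex $v$). The plan is to prove the two implications separately.

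For the easy direction, suppose $C = N[v]$ is a simplicial clique. Any maximal independent set $S$ of $G$ must contain a vertex of $N[v]$: if $v \notin S$, then by maximality some neighbor of $v$ lies in $S$, and that neighbor is in $N[v] = C$. Hence $C$ meets every maximal independent set and is strong. (This half does not even use $C_4$-freeness; it is the standard fact that simplicial cliques are strong, already invoked in the excerpt.)

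For the converse, assume $C$ is a strong clique in the $C_4$-free graph $G$; I want to exhibit a vertex $v$ with $C = N[v]$. The natural first step is to show $C$ is \emph{dominating}: since $C$ is strong it is maximal, so every vertex outside $C$ has a neighbor in $C$. Next I would try to find a single vertex of $C$ that dominates all of $C$'s exterior. Suppose not: then for every $v \in C$ there is a vertex $x_v \notin N[v]$, and since $C$ dominates, $x_v$ has some neighbor $w \in C$. The key claim to extract is that, using $C_4$-freeness, each exterior vertex $x$ can be adjacent to \emph{at most one} vertex of $C$ — indeed if $x$ were adjacent to two distinct $c_1, c_2 \in C$, then $x, c_1, c_2$ together with any further vertex would need checking, but the real obstruction comes from combining two such exterior vertices: if $x$ is adjacent to $c_1$ only and $y$ is adjacent to $c_2$ only with $c_1 \neq c_2$ and $x \not\sim y$, then $\{x, c_1, c_2, y\}$ induces a $C_4$ (edges $xc_1, c_1c_2, c_2y$, non-edges $xc_2, c_1y, xy$) — contradiction. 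So any two exterior vertices attached to distinct vertices of $C$ must themselves be adjacent. From this I would derive that the set of exterior vertices, partitioned by which vertex of $C$ they attach to, forms a clique whenever two different classes are nonempty, and then build a maximal independent set avoiding $C$ to contradict strongness: pick exterior vertices $x$ (attached to $c_1$) and $y$ (attached to $c_2$) with $c_1 \neq c_2$; they are adjacent, so I cannot just take both, but I can take one of them plus... — this needs care. The cleaner route: show that if no vertex of $C$ dominates the exterior, then there exist two exterior vertices $x, y$ that are non-adjacent and attached to distinct vertices of $C$, which already gives a $C_4$ and a contradiction; the remaining case is that all pairs of "differently-attached" exterior vertices are adjacent, in which case fixing any $v \in C$ that is non-adjacent to some exterior $x_v$, all exterior vertices attached to vertices of $C \setminus \{$the attachment point of $x_v\}$ are adjacent to $x_v$, and one massages this to conclude a single vertex dominates after all. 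Once I have a vertex $v \in C$ adjacent to every exterior vertex, then $V(G) = N[v]$... no — I need $C = N[v]$, so additionally $N[v] \subseteq C$, i.e.\ $v$ has no neighbor outside $C$; but $C$ is a maximal clique containing $v$, and if $v$ had a neighbor $z \notin C$ then $z$ would be adjacent to all of $C$ (being adjacent to every exterior vertex is the wrong direction) — so instead I should pick $v$ to be a vertex of $C$ \emph{all of whose neighbors lie in $C$}; such a $v$ satisfies $N[v] \subseteq C$, and combined with maximality of $C$ (no vertex outside $C$ is adjacent to all of $C$) plus the domination argument one gets $C \subseteq N[v]$, hence equality.

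The main obstacle is the bookkeeping in the converse: correctly identifying which vertex of $C$ to designate as the simplicial apex and handling the case analysis on how exterior vertices attach to $C$ so that every failure mode produces either an induced $C_4$ or a maximal independent set disjoint from $C$. I expect the slick argument to be: if $C$ is not simplicial, then for each $v \in C$ choose $x_v \notin N[v]$; among these exterior vertices, a careful pigeonhole using $|C| \geq 1$ and $C_4$-freeness forces two of them to be non-adjacent while attached to distinct vertices of $C$ (or one finds them directly), yielding an induced $C_4$ — contradicting $C_4$-freeness before even needing strongness beyond maximality. I would write up that streamlined version.
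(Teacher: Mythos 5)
The forward direction is fine, but the converse has a genuine gap, and the ``streamlined version'' you intend to write up is false. Your key claim is a sign error: if $x\sim c_1$, $x\not\sim c_2$, $y\sim c_2$, $y\not\sim c_1$ and $x\not\sim y$, then $\{x,c_1,c_2,y\}$ induces a path $P_4$ (namely $x$--$c_1$--$c_2$--$y$), not a $C_4$; an induced $C_4$ arises in the \emph{opposite} situation, when $x$ and $y$ are adjacent and their neighborhoods in $C$ are incomparable. Consequently the statements you derive from it (``any two differently-attached exterior vertices must be adjacent,'' and the final pigeonhole) are false: take $G=C_5$ and $C$ an edge; the two exterior neighbors of its endpoints are non-adjacent and attached to distinct vertices of $C$, yet $C_5$ is $C_4$-free. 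The same example kills the plan to conclude ``before even needing strongness beyond maximality'': every edge of $C_5$ is a maximal, non-simplicial clique in a $C_4$-free graph, so maximality plus $C_4$-freeness cannot force simpliciality -- strongness must be used, and the only way it enters is by exhibiting a maximal independent set disjoint from $C$ (indeed, no edge of $C_5$ is strong). A secondary confusion: much of your argument hunts for a vertex of $C$ dominating the exterior, whereas simpliciality asks for a vertex $v\in C$ with \emph{no} exterior neighbors; once such a $v$ exists you are done immediately, since $C\subseteq N[v]$ holds automatically for a clique $C$ containing $v$. Also, when $C$ is not simplicial the right choice is $x_v\in N(v)\setminus C$ for each $v\in C$, not $x_v\notin N[v]$.

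The paper's proof shows how strongness is used correctly. If $C$ is not simplicial, every vertex of $C$ has a neighbor outside $C$, so $N(C)$ dominates $C$; choose a \emph{minimal} set $I\subseteq N(C)$ dominating $C$. If two vertices $x,y\in I$ were adjacent, $C_4$-freeness forces $N(x)\cap C$ and $N(y)\cap C$ to be comparable (your observation, with the adjacency the right way around), so one of $x,y$ is redundant, contradicting minimality; hence $I$ is independent. Extending $I$ to a maximal independent set yields one disjoint from $C$, because every vertex of $C$ already has a neighbor in $I$ -- contradicting the assumption that $C$ is strong. So the comparability step you identified is indeed the heart of the matter, but the missing ingredients are the minimal dominating set $I$ and the extension argument; there is no strongness-free contradiction to be had.
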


\begin{proof}
It is easy to see that in any graph, every simplicial clique is strong.

For the converse direction, let $C$ be a strong clique in a $C_4$-free graph $G$. Suppose for a contradiction that $C$ is not simplicial.
Then, every vertex of $C$ has a neighbor in $V(G)\setminus C$, that is, $N(C)$ dominates $C$.
Let $I$ be any minimal set of vertices in $N(C)$ that dominates $C$.
We claim that $I$ is an independent set in $G$. Suppose that there exists a pair $x,y$ of adjacent vertices in $I$.
The $C_4$-freeness of $G$ implies that the neighborhoods of $x$ and $y$ in $C$ are comparable, that is, $N(x)\cap C \subseteq N(y)\cap C$ or
$N(y)\cap C \subseteq N(x)\cap C$.
But then, assuming (w.l.o.g.) $N(x)\cap C \subseteq N(y)\cap C$, we could remove $y$ from $I$ to obtain a subset of $N(C)$ that dominates $C$ properly contained in $I$, contradicting the minimality of $I$. This shows that $I$ is independent, as claimed.
Extending $I$ to an arbitrary maximal independent set of $G$ yields a maximal independent set disjoint from $C$, contradicting the fact that $C$ is a strong clique. The obtained contradiction completes the proof that $C$ is simplicial.
\end{proof}

\begin{theorem}\label{thm:C4-free}
A $C_4$-free graph $G$ is localizable if and only if each vertex of $G$ is in a unique simplicial clique.
\end{theorem}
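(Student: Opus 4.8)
The plan is to use Lemma~\ref{lem:C4-free}, which identifies strong cliques in $C_4$-free graphs with simplicial cliques, to reduce the statement to a purely structural condition about covers by simplicial cliques. For the forward direction, suppose $G$ is localizable, and fix a partition $\mathcal{C} = \{C_1, \ldots, C_k\}$ of $V(G)$ into strong cliques. By Lemma~\ref{lem:C4-free}, each $C_i$ is simplicial, so certainly every vertex lies in some simplicial clique. To show uniqueness, I would argue that any simplicial clique of $G$ must in fact be strong (this holds in every graph: if $C = N[v]$ then every maximal independent set contains a neighbor of $v$ in $C$, or $v$ itself), and then invoke Proposition~\ref{prop:alpha-chi-bar}(c): since $G$ is localizable it is semi-perfect and well-covered, hence \emph{every} clique in \emph{every} $\alpha$-clique cover is strong. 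A simplicial clique $C = N[v]$ containing a vertex $u$ that also lies in a distinct simplicial clique $C' = N[w]$ would create a vertex covered twice — I need to package a collection of simplicial cliques that is disjoint and covers $V(G)$, and show any "extra" simplicial clique forces a maximal independent set to be too small or too large. Concretely: if $v$ had two distinct simplicial cliques $N[v] = C$ and some $C' \ni v$ with $C' \neq C$, pick a vertex $u \in C' \setminus C$ (such $u$ exists and is nonadjacent to some vertex of $C$, contradicting that $C = N[v]$ and $v \in C'$ so $u \in N(v)$)—actually the cleanest route is: the simplicial cliques of $G$ form an $\alpha$-clique cover when each vertex is in exactly one of them, and conversely repeated membership breaks the partition property, so I should show directly that the multiset of \emph{distinct} simplicial cliques, each vertex lying in one, is forced.

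For the converse, suppose each vertex of $G$ lies in a unique simplicial clique; call these cliques $C_1, \ldots, C_m$ (distinct ones, each equal to $N[v]$ for at least one $v$). The uniqueness hypothesis says these cliques partition $V(G)$: every vertex is in at least one by hypothesis, and "unique" forbids overlaps. Each $C_i$ is simplicial, hence strong, hence $G$ is localizable by definition. This direction is essentially immediate once the partition property is extracted from "unique."

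The main obstacle I anticipate is the forward direction, specifically the step showing uniqueness of the simplicial clique at each vertex. The subtlety is that $C_4$-freeness of $G$ must be used to rule out a vertex $v$ sitting in two different simplicial cliques $N[v]$ and $N[w]$ — but if $v \in N[w]$ then $w \in N(v) \subseteq N[v]$, and I need $N[w] = N[v]$, i.e., $w$ is also simplicial \emph{with the same closed neighborhood}. Without $C_4$-freeness this can genuinely fail (take $K_{2,2}$-like attachments), so the argument must exploit that $N[v]$ being a clique plus $C_4$-freeness forces $N[w] \subseteq N[v]$ whenever $w \in N[v]$ is simplicial, and then the reverse inclusion by symmetry of the roles, or by counting against well-coveredness. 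I would lean on Lemma~\ref{lem:C4-free} together with the fact (from Proposition~\ref{prop:alpha-chi-bar}) that localizability equals "well-covered and semi-perfect" to convert a hypothetical second simplicial clique at $v$ into either a maximal independent set of the wrong size or a violation of the clique-cover-number bound, mirroring the Dean–Zito argument for $C_4$-free semi-perfect graphs referenced in the text.
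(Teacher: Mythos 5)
Your backward direction is fine and matches the paper: ``each vertex is in a unique simplicial clique'' forces the distinct simplicial cliques to be pairwise disjoint and to cover $V(G)$, and since simplicial cliques are strong in any graph, this partition witnesses localizability. The forward direction, however, has a genuine gap exactly where you flag it. From a partition of $V(G)$ into strong cliques and Lemma~\ref{lem:C4-free} you correctly get that every vertex lies in \emph{some} simplicial clique, but your argument for uniqueness only treats the easy case in which the vertex $v$ under consideration is itself simplicial (there the unique maximal clique containing $v$ is $N[v]$, and simplicial cliques are maximal, so there is nothing to do). The problematic case is a non-simplicial vertex $v$ lying in its partition clique $C_i$ and in a second simplicial clique $C'=N[v']$ with $v'\neq v$; for this case you offer only the hope of ``converting'' the extra clique into a counting contradiction via Proposition~\ref{prop:alpha-chi-bar}. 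That route does not obviously close: the extra clique $C'$ is itself strong (every simplicial clique is), so its mere existence violates neither well-coveredness nor $\theta(G)=\alpha(G)$, and no concrete maximal independent set of the wrong size is produced.

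The paper closes this case with a short local argument that you are missing: let $C_j$ be the partition clique containing the simplicial vertex $v'$ of $C'$. If $j=i$, then $C_i\subseteq N[v']=C'$, and maximality of both cliques gives $C_i=C'$, a contradiction; so $j\neq i$, hence $v\notin C_j$, and by maximality of $C_j$ the vertex $v$ has a non-neighbor $v''\in C_j$. Then $v$ and $v''$ are two non-adjacent neighbors of $v'$, contradicting the fact that $N[v']$ is a clique. Note that this step uses nothing about $C_4$-freeness or well-coveredness --- it is valid in every graph --- so your concern that $C_4$-freeness must be exploited a second time in the uniqueness step is unfounded; $C_4$-freeness enters only through Lemma~\ref{lem:C4-free}, which identifies strong with simplicial cliques. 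As written, your proposal does not contain an argument for the key case, so the proof is incomplete.
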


\begin{proof}
By Lemma~\ref{lem:C4-free}, $G$ is localizable if and only if
its vertex set can be partitioned into simplicial cliques.
This condition is clearly satisfied if each vertex is in a unique simplicial clique.

Suppose now that $V(G)$ partitions into simplicial cliques $C_1,\ldots, C_k$.
It suffices to show that for every $v\in C_i$, clique
$C_i$ is the only simplicial clique containing $v$.
This is clear if $v$ is a simplicial vertex (in this case $C_i$ is the only maximal clique containing $v$).
Suppose now that $v$ is not a simplicial vertex and that $C'$ is a simplicial clique containing $v$ such that $C'\neq C_i$.
Let $v'\in C'$ be a vertex such that $N[v'] = C'$.  Consider the simplicial clique $C_j$ such that $v'\in C_j$. We have $j\neq i$, since otherwise
we would have $C_i\subseteq N[v'] = C'$, contrary to the fact that $C_i$ and $C'$ are distinct maximal cliques.
Since $C_j$ is a maximal clique, vertex $v$ has a non-neighbor in $C_j$, say $v''$. But now, vertices
$v$ and $v''$ form a pair of non-adjacent neighbors of $v'$, contrary to the fact that the neighborhood of $v'$ is a clique.
This shows that $C_i$ is the only simplicial clique containing $v$ and completes the proof.
\end{proof}

Since the set of simplicial cliques in a graph can be computed in polynomial time, Theorem~\ref{thm:C4-free} implies the existence of a polynomial time algorithm to determine if a given $C_4$-free graph is localizable. Moreover, since within the class of semi-perfect graphs, localizable graphs coincide with well-covered ones, Theorem~\ref{thm:C4-free} generalizes the above-mentioned characterizations of well-covered graphs within the classes of chordal and of $C_4$-free semi-perfect graphs due to Prisner et al.~\cite{MR1368737} and Dean and Zito~\cite{MR1264476}, respectively.

\subsection{Cubic graphs}\label{sec:cubic}

A graph is {\it cubic} if it is $3$-regular. Well-covered cubic graphs were classified by Campbell et al.~\cite{MR1220613}.
The classification consists of three infinite families together with seven exceptional graphs.
By testing each of the graphs in the list for localizability, a classification of localizable cubic graphs could be derived.
However, the way to characterizing the well-covered cubic graphs was long, building on earlier results characterizing well-covered cubic planar graphs due to Campbell~\cite{Campbell} and Campbell and Plummer~\cite{MR942505} (see Plummer's survey~\cite{MR1254158} for more details).
We give a short direct proof of the classification of localizable cubic graphs.

We first develop a property of strong cliques in regular graphs.

\begin{lemma}\label{prop:trianglefree-regular}
Let $r\ge 1$ and let $G$ be a connected $r$-regular graph. Then $G$ has a strong clique of size two if and only if $G\cong K_{r,r}$.
\end{lemma}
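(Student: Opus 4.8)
The plan is to use Lemma~\ref{lem:strong} to translate the existence of a strong clique of size two into structural constraints on the $r$-regular graph $G$. Suppose $G$ has a strong clique $\{u,v\}$. By Lemma~\ref{lem:strong}, the edge $uv$ lies in no triangle and every vertex of $N(u)$ is adjacent to every vertex of $N(v)$. Since $uv$ is in no triangle, $N(u)\cap N(v)=\emptyset$, and since $v\in N(u)$ has degree $r$, $v$ is adjacent to all $r$ vertices of $N(u)$; one of these is $u$ itself, so $v$ together with its $r-1$ other neighbors accounts for $N(u)\setminus\{v\}$, i.e. $N(u)\setminus\{v\}\subseteq N(v)\setminus\{u\}$, and by symmetry equality holds. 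Write $A=N(u)\setminus\{v\}=N(v)\setminus\{u\}$, a set of size $r-1$.

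Next I would show the whole graph is $K_{r,r}$. Let $B=A\cup\{u\}$ on one side and consider $A\cup\{v\}$; actually the cleaner bipartition is $X=\{u\}\cup A'$ and $Y=\{v\}\cup A'$ won't work directly since $A$ is shared, so instead I argue as follows. Every vertex $w\in A$ is adjacent to both $u$ and $v$ (as $w\in N(u)$ and $w\in N(v)$), and by the strong-clique property every vertex of $N(u)$ is adjacent to every vertex of $N(v)$; since $\{u\}\cup A\subseteq N(v)$ and $\{v\}\cup A\subseteq N(u)$, each vertex of $A$ is adjacent to each vertex of $\{v\}\cup A$ and also, symmetrically, every pair $w,w'\in A$ with $w\in N(u)$, $w'\in N(v)$ would be forced adjacent — but that contradicts $uv$ lying in no triangle unless $A$ is independent. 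Let me instead set it up carefully: take any $w,w'\in A$ (possibly equal is excluded). Then $w\in N(v)$ and $w'\in N(u)$, so the condition ``every vertex of $N(u)$ adjacent to every vertex of $N(v)$'' forces $w'w\in E(G)$; but then $u,w,w'$ would form a triangle through edge $uw'$... that does not involve $uv$. So I need the stronger observation that $\{u,v\}$ being a maximal clique does not forbid triangles elsewhere. The correct route: since $\{w,w'\}\subseteq N(u)$ are then forced adjacent for all pairs in $A$, $A$ is a clique; combined with each vertex of $A$ being adjacent to $u$ and $v$, the set $\{u\}\cup A$ would be a clique of size $r+1$ in an $r$-regular graph — impossible unless $r=1$, $A=\emptyset$. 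Hence for $r\ge 2$, $A$ must be independent, and then $\{u,v\}\cup A$... I must recheck which pairs are actually forced adjacent; the hypothesis only forces adjacency between $N(u)$ and $N(v)$, and $A\subseteq N(u)\cap$... wait $A\subseteq N(u)$ AND $A\subseteq N(v)$, so any $w\in A\subseteq N(u)$ and $w'\in A\subseteq N(v)$ are forced adjacent, giving that $A$ is a clique. So indeed $\{u,v\}\cup A\setminus$ — precisely $\{u\}\cup A$ is a clique of size $r$, with $v$ adjacent to all of $A$ but $uv$... $u,v$ adjacent too, so $\{u,v\}\cup A$ has size $r+1$ and is a clique, contradiction for $r\ge 1$ unless $A$ is empty, i.e. $r=1$.

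This reveals that the natural reading needs care: the resolution is that $A$ need \emph{not} be a clique because a vertex $w\in A$ need not lie in both $N(u)$ and $N(v)$ simultaneously in the role demanded. I would therefore restate the argument using the contrapositive structure of Lemma~\ref{lem:strong} more conservatively: the key facts are $N(u)\cap N(v)=\emptyset$ (no triangle on $uv$) together with $N(u)\setminus\{v\}=N(v)\setminus\{u\}$ being false — indeed disjointness of $N(u),N(v)$ plus $v\in N(u)$, $u\in N(v)$, plus complete bipartite adjacency between $N(u)$ and $N(v)$ forces, for each $w\in N(u)\setminus\{v\}$, that $w$ is adjacent to all of $N(v)$, in particular to $u$, consistent, and to $r-1$ further vertices; since $d(w)=r$ and $w\sim u$, $w$ has exactly $r-1$ neighbors in $N(v)$, i.e. $w$ is adjacent to all of $N(v)$ except possibly one vertex, and since $|N(v)|=r$ with $u\in N(v)$, $w$ is adjacent to $u$ and to $r-1$ of the remaining $r-1$ vertices, hence to all of $N(v)$. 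Thus every vertex of $N(u)$ is adjacent to all of $N(v)$ and vice versa, $N(u)\cap N(v)=\emptyset$, and each such vertex has all its neighbors inside $N(u)\cup N(v)$ — so $G[N(u)\cup N(v)]$ is a $2r$-vertex $r$-regular bipartite graph with no edges inside $N(u)$ or inside $N(v)$ (else a triangle or degree excess), which is $K_{r,r}$; connectivity of $G$ then forces $G=G[N(u)\cup N(v)]\cong K_{r,r}$. The converse is immediate: in $K_{r,r}$ any edge lies in no triangle and its endpoints' neighborhoods are the two complete sides, fully cross-adjacent, so Lemma~\ref{lem:strong} gives that the edge is a strong clique. \textbf{The main obstacle} is exactly the bookkeeping above — correctly deducing from $r$-regularity plus the Lemma~\ref{lem:strong} conditions that \emph{no} edges appear inside $N(u)$ or inside $N(v)$ and that these two sets exhaust $V(G)$; once that is pinned down, identifying the graph as $K_{r,r}$ and invoking connectivity is routine.
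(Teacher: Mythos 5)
Your final restated argument is correct and is essentially the paper's proof: Lemma~\ref{lem:strong} gives $N(u)\cap N(v)=\emptyset$ and a complete join between $N(u)$ and $N(v)$, so $r$-regularity pins every vertex's neighbours inside $N(u)\cup N(v)$ and connectivity forces $G\cong K_{r,r}$, with the converse immediate. Do note, though, that the intermediate claims in your first two paragraphs (that $N(u)\setminus\{v\}=N(v)\setminus\{u\}$, and that $A$ is a clique) are false — they stem from misreading the lemma as making $v$ adjacent to all of $N(u)$, contradicting the disjointness you correctly use later — so only the corrected version at the end should be kept.
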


\begin{proof}
Suppose that $G$ is a connected $r$-regular graph, and let $\{u,v\}$ be a strong clique in $G$.
Let $A$ denote the set of neighbours of $u$ different from $v$, and let $B$ denote the set of neighbours of $v$ different from  $u$.
By Lemma~\ref{lem:strong}, $A\cap B=\emptyset$ and every vertex in $A$ is adjacent to every vertex in $B$.
Moreover, since $G$ is $r$-regular, we have $|A|=|B|=r-1$. The connectedness of $G$ and the fact that $G$ is $r$-regular implies that $G\cong K_{r,r}$.
The converse direction is immediate.
\end{proof}

\begin{figure}[!ht]
\centering
\begin{minipage}{.35\textwidth}
  \centering
  \includegraphics[width=0.6\linewidth]{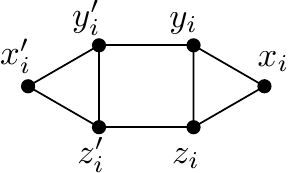}
  \captionof{figure}{The graph $F$}
  \label{fig:F}
\end{minipage}%
\hspace{1cm}
\begin{minipage}{.4\textwidth}
  \centering
  \includegraphics[width=\linewidth]{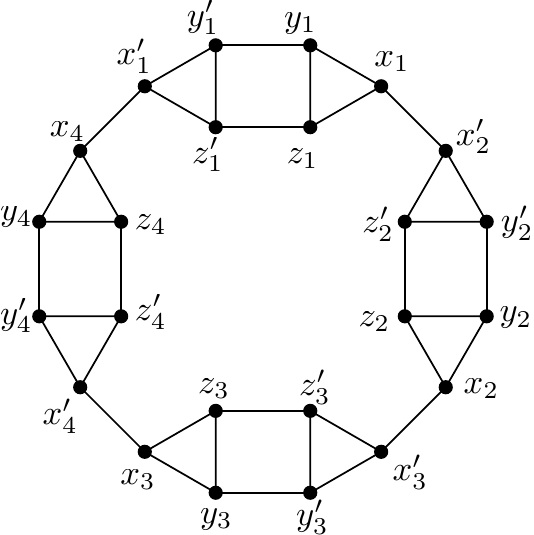}
  \captionof{figure}{The graph $F_4$}
  \label{fig:F4}
\end{minipage}
\end{figure}

For an integer $n\ge 2$, let $F_n$ denote the graph obtained as follows:
take a cycle $v_1v_2\ldots v_n v_1$ of length $n$ (if $n=2$, then $v_1v_2v_1$  is a cycle of length $2$ with $2$ parallel edges),
replace every vertex $v_i$ of the cycle by vertices $x_i,x'_i,y_i,y'_i,z_i,z'_i$ inducing the graph $F$ (see Figure~\ref{fig:F}); replace each edge $v_iv_{i+1}$ ($i=1,\ldots,n-1$) of the cycle by an edge $x_i x'_{i+1}$, finally replace the edge $v_nv_i$ by the edge $x_nx'_1$ (see Figure~\ref{fig:F4} for an example).

\begin{theorem}\label{thm:3-regular}
Let $G$ be a connected cubic  graph. Then, the following statements are equivalent:
\begin{enumerate}
  \item $G$ is localizable.
  \item Every vertex of $G$ is contained in a strong clique.
  \item $G$ is isomorphic to one of the graphs in the set $\{K_{3,3},K_4,\overline{C_6}\}\cup\{F_n: n\ge 2\}$ (see Figure~\ref{fig:cubic}).
\end{enumerate}
In particular, localizable cubic graphs can be recognized in polynomial time.
\end{theorem}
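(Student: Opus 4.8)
The plan is to prove the equivalence of the three statements cyclically, with the two substantive implications being $(3)\Rightarrow(1)$ (the listed graphs are all localizable) and $(2)\Rightarrow(3)$ (every cubic graph in which each vertex lies in a strong clique is on the list); the implication $(1)\Rightarrow(2)$ is immediate from the definition of localizability. First I would deal with $(3)\Rightarrow(1)$ by exhibiting explicit partitions into strong cliques. For $K_4$ the whole vertex set is a strong clique; for $K_{3,3}=K_{3,3}$ we use Lemma~\ref{prop:trianglefree-regular}, which already tells us $K_{3,3}$ has a strong clique of size two, and a perfect matching of such edges gives the partition; for $\overline{C_6}$ (which is $K_{3,3}$'s "cousin", in fact two disjoint triangles) the two triangles are the obvious candidates and one checks each is strong using the fact that $\overline{C_6}\cong K_3\cup K_3$ wait --- more carefully, $\overline{C_6}$ is the complement of a 6-cycle, which is the triangular prism's complement; I would simply verify directly that its two triangles are strong cliques. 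For $F_n$, the construction is engineered so that the "central" edges/triangles inside each copy of $F$ form strong cliques; I would identify the designed partition (the cliques coming from the gadget $F$ in Figure~\ref{fig:F}) and verify strongness via Lemma~\ref{lem:strong} (for the size-two cliques) and by direct inspection of neighborhoods (for any triangles), using that $F_n$ is cubic so the relevant closed neighborhoods are small.

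The heart of the proof is $(2)\Rightarrow(3)$. Assume $G$ is connected cubic and every vertex lies in a strong clique. Since $G$ is cubic, any clique has size at most $4$, and a strong clique must be a maximal clique. The key structural dichotomy is by clique size. If some strong clique has size $4$, then $G=K_4$ by connectedness and $3$-regularity. If some strong clique has size $3$, say $T=\{a,b,c\}$, then in a cubic graph each of $a,b,c$ has exactly one neighbor outside $T$; analysing how $T$ attaches to the rest and iterating the "every vertex is in a strong clique" hypothesis on those outside neighbors should force $G\in\{\overline{C_6}\}\cup\{F_n\}$ or again $K_4$. If every strong clique has size $2$, then every vertex lies on a strong edge; but Lemma~\ref{prop:trianglefree-regular} says a connected $r$-regular graph with a strong clique of size $2$ is exactly $K_{r,r}$, so $G=K_{3,3}$. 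The triangle case is where the real case analysis lives: starting from a strong triangle $T$, the three "pendant" neighbors $a',b',c'$ each lie in their own strong clique, and one has to rule out / pin down the possible local configurations --- whether two of $a',b',c'$ coincide or are adjacent, whether their strong cliques are edges or triangles --- and show that the only consistent global structures are $\overline{C_6}$ (two strong triangles joined by a perfect matching of the "wrong" kind) and the family $F_n$ (a cyclic chain of the gadget $F$).

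The main obstacle I anticipate is precisely this triangle case: showing that repeatedly applying the strong-clique hypothesis propagates a rigid local pattern --- each copy of the gadget $F$ --- around the graph, and that the patterns can only close up into a cycle (giving $F_n$) or into the small exceptional graph $\overline{C_6}$, with no other possibilities. Concretely, I would argue that if $T$ is a strong triangle with outside-neighbors $a',b',c'$, then no two of $a',b',c'$ can be adjacent (else one gets a $4$-cycle violating strongness of a nearby clique, or a degree contradiction), that each $a'$ lies in a strong clique which, being forced to be maximal and to have all its members' neighborhoods "completable" away from $T$, must itself be an edge or triangle of a specific shape, and that stitching these constraints together yields exactly the block structure of $F$. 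A careful bookkeeping of degrees (each vertex has degree $3$, each strong edge $\{u,v\}$ forces $N(u)\setminus\{v\}$ completely joined to $N(v)\setminus\{u\}$, which in a cubic graph is a very restrictive "two vertices joined to two vertices" condition) is what makes the case analysis finite and, ultimately, short. Finally, once the classification is established, polynomial-time recognition is immediate: given a connected cubic input, check membership in the finite set $\{K_{3,3},K_4,\overline{C_6}\}$ and, failing that, test isomorphism to some $F_n$ (which has a simple recursive/cyclic structure detectable in linear time), or equivalently just test condition (2) directly by checking, for each vertex, each of the $O(1)$ maximal cliques through it for strongness --- and strongness of a fixed small clique can be tested in polynomial time since $\alpha(G)$ is bounded by $|V(G)|$ and here even more simply because the relevant certificates are local.
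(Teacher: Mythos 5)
Your outer structure ((1)$\Rightarrow$(2) trivially, (3)$\Rightarrow$(1) by exhibiting the partitions, and the size-$2$ and size-$4$ subcases of (2)$\Rightarrow$(3) via Lemma~\ref{prop:trianglefree-regular} and $3$-regularity) matches the paper. But the heart of the theorem, the subcase of (2)$\Rightarrow$(3) in which every vertex lies in a strong triangle, is left as a plan, and the one concrete structural claim you make about it is false. You propose to show that if $T=\{a,b,c\}$ is a strong triangle with outside neighbours $a',b',c'$, then \emph{no two of $a',b',c'$ are adjacent}. The truth is exactly the opposite, and that opposite fact is the engine of the whole classification: if $a',b',c'$ were pairwise non-adjacent (and, as one first checks, pairwise distinct), they would form an independent set, and extending it to a maximal independent set of $G$ would produce a maximal independent set disjoint from $T$, contradicting the strongness of $T$. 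So at least two of the outside neighbours, say $b'$ and $c'$, must be adjacent. Your claim also contradicts the very graphs you are trying to characterize: in $\overline{C_6}$ (which, incidentally, is the triangular prism --- two triangles joined by a perfect matching --- not two disjoint triangles nor the prism's complement) the three outside neighbours of either strong triangle form the other triangle, and in $F_n$ the outside neighbours $y_i'$ and $z_i'$ of the triangle $\{x_i,y_i,z_i\}$ are adjacent. An argument built on your claimed lemma would therefore either derive a false statement or, applied to $\overline{C_6}$ and $F_n$, wrongly exclude them.

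The configuration that \emph{is} excluded is the one you call ``coincide'': two vertices of a strong triangle $T$ having a common neighbour $w$ outside $T$ (the paper rules this out by noting that $\{a,b,w\}$ would then be the unique, hence strong, triangle through $w$, and following $w$'s remaining neighbour to a contradiction with every vertex lying in a triangle). Once coincidences are excluded and the adjacency $b'c'$ is forced, the hypothesis that $b'$ and $c'$ themselves lie in (strong) triangles yields a common neighbour $x'$ of $b'$ and $c'$, recreating the gadget $F$; the dichotomy ``$a$ adjacent to $x'$ or not'' then either closes the graph up immediately (giving $\overline{C_6}$) or launches the iteration along the third neighbours of the $x$-type vertices, which by finiteness and connectedness must close into a cycle of gadgets, i.e.\ some $F_n$. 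This propagation-and-closure argument is precisely the content you postponed with ``should force'' and ``I would argue'', so as written the proposal has a genuine gap at the decisive step, compounded by the incorrect non-adjacency claim; the remaining parts (the explicit partitions for (3)$\Rightarrow$(1) and the polynomial-time recognition remark) are fine.
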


\begin{figure}[!ht]
  \centering
  \includegraphics[width=\linewidth]{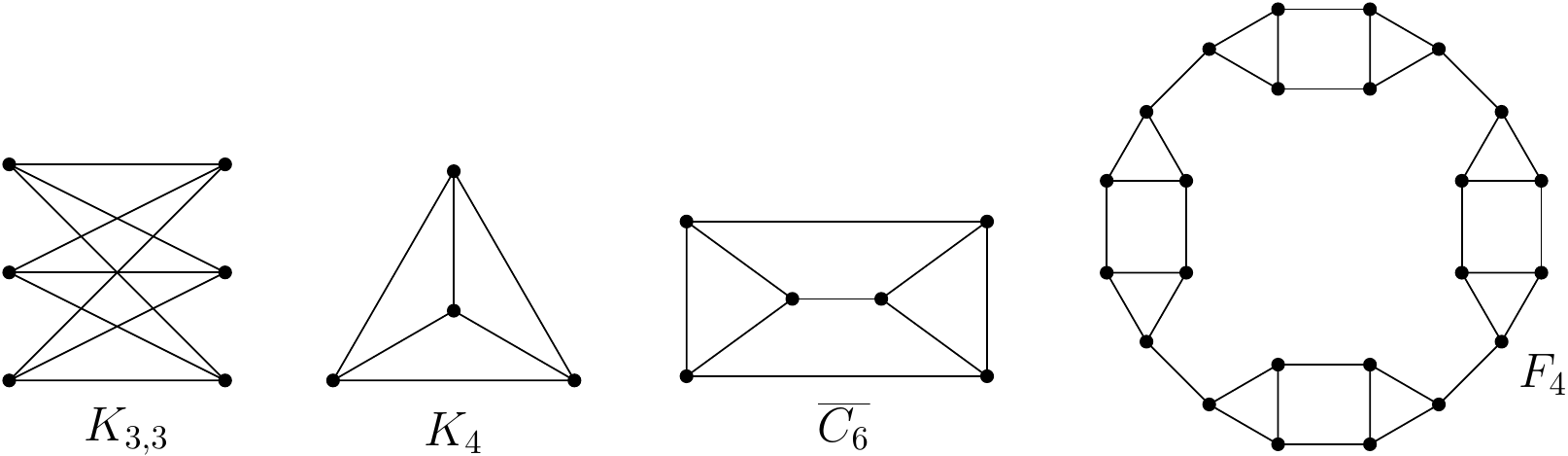}
  \captionof{figure}{Cubic localizable graphs: $K_{3,3}$, $K_4$, $\overline{C_6}$, and an infinite family $\{F_n: n\ge 2\}$}
  \label{fig:cubic}
\end{figure}

\begin{proof}
It follows immediately from the definition of localizable graphs that every vertex of a localizable graphs is contained in a strong clique.
Therefore, statement $1$ implies statement $2$.

It is also easy to check that each of the graphs in the set $\{K_{3,3},K_4,\overline{C_6}\}\cup\{F_n: n\ge 2\}$ is localizable.
(Each edge of $K_{3,3}$ is a strong clique; in $K_4$ the whole vertex set is a strong clique; in each of the remaining graphs
each vertex is contained in a unique triangle and all the triangles are strong.)
Therefore, statement $3$ implies statement $1$.

Finally, we show that statement $2$ implies statement $3$.

Since $G$ is cubic, it cannot have a clique of size $5$ or a strong clique of size $1$.
If there exists a strong clique of size $2$ in $G$, then by Lemma~\ref{prop:trianglefree-regular}, $G\cong K_{3,3}$.
If there exists a strong clique of size $4$ in $G$, then $G\cong K_4$ since $G$ is cubic.

Suppose now that every vertex of $G$ belongs to a strong clique of size $3$. Let $C_1=\{x_1,y_1,z_1\}$ be a strong clique in $G$. All three vertices of $C_1$ cannot have a common neighbour, otherwise $C_1$ would not be a strong clique. Suppose that two vertices of $C_1$, say $x_1$ and $y_1$, have a common neighbour, say $w$, outside $C_1$. Let $w'\ne z_1$ be the remaining neighbour of $w$. Since the only clique of size $3$ that contains $w$ is $\{x_1,y_1,w\}$, it follows that $\{x_1,y_1,w\}$ is a strong clique. If $w'$ is not adjacent to $z_1$, then a maximal independent set in $G$ containing $\{w',z_1\}$ would be disjoint from $\{x_1,y_1,w\}$, contradicting the fact that $\{x_1,y_1,w\}$ is a strong clique. This implies that $w'$ is adjacent to $z_1$. However, in this case, vertex $w'$ does not belong to any triangle in $G$, which contradicts the assumption that every vertex of $G$ belongs to a strong clique of size $3$. This shows that no two vertices of $C_1$ have a common neighbour outside of $C_1$.

If the neighbours of $x_1,y_1,z_1$ outside of $C_1$ form an independent set $S$, then any maximal independent set in $G$ containing $S$ would be disjoint from $C_1$, a contradiction. Suppose now that $y'_1$ and $z'_1$ are the remaining neighbours of $y_1$ and $z_1$, and that $y'_1$ is adjacent to $z'_1$.
Since every vertex belongs to a triangle, it follows that $y'_1$ and $z'_1$ have a common neighbour, say $x'_1$. If $x_1$ is adjacent to $x'_1$ then $G\cong \overline{C_6}$. If $x_1$ and $x'_1$ are not adjacent, then it follows from the above that for any strong clique $C=\{u,v,w\}$ in $G$, there exist vertices $u',v',w'$ such that $vv', ww'\in E(G)$, $\{u',v',w'\}$ is a strong clique and $u$ is not adjacent to $u'$.

Let $x'_2$ be the remaining neighbour of $x_1$. Since $x'_2$ belongs to a strong clique, it follows that there exist vertices $y'_2$ and $z'_2$ such that $C_2=\{x'_2,y'_2,z'_2\}$ is strong clique. Note that the fact that $G$ is cubic implies that $\{y'_2,z'_2\}\cap \{x_1,y_1,z_1,x_1',y_1',z_1'\} = \emptyset$.
Let $y_2$ and $z_2$ be the remaining neighbours of $y'_2$ and $z'_2$ respectively.
Then using the same argument as above, replacing $C$ with $C_2$ one can easily see that $y_2$ and $z_2$ are adjacent, and that they have a common neighbour, say $x_2$. If $x_2$ is adjacent with $x'_1$, then $G\cong F_2$. If $x_2$ and $x'_1$ are not adjacent, then let $x'_3$ be the remaining neighbour of $x_2$ and repeat the same argument as before. Since the graph is finite and connected, it follows that for some $n$, we will have that $x_n$ is adjacent with $x_1'$, hence $G\cong F_n$.
\end{proof}

\subsection{Line graphs}\label{sec:line}

A graph $G$ is said to be a {\it line graph} if $G = L(H)$ for some graph $H$. In this section we characterize localizable line graphs.
The characterization implies the existence of a polynomial time algorithm to determine whether a given line graph is localizable.

Recall that the line graph of a graph $H$ is well-covered if and only if $H$ is {\it equimatchable}, that is, if all maximal matchings of $G$ are of the same size. The question of characterizing equimatchable graphs was posed by Gr\"{u}nbaum in $1974$~\cite{Gruenbaum}; in the same year, equimatchable graphs were studied and characterized by Lewin~\cite{Lewin} and by Meng~\cite{Meng}, and shown to be polynomially recognizable by Lesk et al.~\cite{MR777180}.
Since every localizable graph is well-covered, every graph the line graph of which is localizable is equimatchable.
Moreover, since the line graphs of bipartite graphs are perfect, Corollary~\ref{cor:perfect} implies that
the line graph of a bipartite graph $H$ is localizable if and only if it is well-covered; equivalently, if $H$ is equimatchable.
Lesk et al.~\cite{MR777180} characterized equimatchable bipartite graphs as follows.

\begin{theorem}[\!\!\cite{MR777180}]\label{thm:bip-equim}
A connected bipartite graph $H$ with a bipartition of its vertex set into two independent sets $V(H) = U\cup W$
with $|U|\le |W|$ is equimatchable if and only if for all $u\in U$, there exists a non-empty set $X\subseteq N(u)$ such that
$|N(X)|\le |X|$.
\end{theorem}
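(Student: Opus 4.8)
The plan is to recast the combinatorial condition as a statement about maximal matchings and then analyse those.

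The first step is a matching reformulation of the condition. Fix $u\in U$; since every vertex of a non-empty $X\subseteq N(u)$ is adjacent to $u$, we have $u\in N(X)$ and $N(X)\subseteq U$, so the requirement ``$|N(X)|\le|X|$ for some non-empty $X\subseteq N(u)$'' is, by Hall's theorem, exactly the negation of ``$N(u)$ admits a matching into $U\setminus\{u\}$''. In turn, $N(u)$ admits a matching into $U\setminus\{u\}$ if and only if $H$ has a maximal matching missing $u$: extend such a matching of $N(u)$ to a maximal matching of $H$ (which then cannot saturate $u$), and conversely a maximal matching missing $u$ must saturate all of $N(u)$ and match it away from $u$. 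Hence the condition of the theorem holds at $u$ if and only if every maximal matching of $H$ saturates $u$, and so the theorem is equivalent to the assertion that \emph{$H$ is equimatchable if and only if every maximal matching of $H$ saturates every vertex of $U$}.

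One direction is then immediate: if every maximal matching of $H$ saturates $U$, then every maximal matching has exactly $|U|$ edges, so all maximal matchings have the same size and $H$ is equimatchable (this needs neither connectivity nor $|U|\le|W|$). For the converse I would argue by contradiction: if $H$ is equimatchable but some maximal matching $M$ misses a vertex of $U$, then $|M|<|U|$ while equimatchability makes $M$ maximum, so $\nu(H)<|U|$, where $\nu(H)$ is the maximum size of a matching in $H$. The heart of the proof is the claim that \emph{a connected bipartite graph with parts $U,W$, $|U|\le|W|$ and $\nu(H)<|U|$ is not equimatchable}, which contradicts the hypothesis. To establish this I would take a set $T\subseteq U$ of maximum deficiency $d:=|U|-\nu(H)\ge 1$; a standard K\"onig-type argument gives $\emptyset\neq T\subsetneq U$, shows that $N(T)$ can be matched into $T$ inside $H[T\cup N(T)]$, and shows that every vertex of $U\setminus T$ has a neighbour outside $N(T)$ (otherwise adding it to $T$ would raise the deficiency). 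Connectivity together with $T\neq U$ then produces a vertex $u^*\in U\setminus T$ with a neighbour $w^*\in N(T)$. One then matches $N(T)\setminus\{w^*\}$ into $T$, adds the edge $u^*w^*$ so that all of $N(T)$ is covered, and extends to a maximal matching $M''$: the $d+1$ uncovered vertices of $T$ have all their neighbours in the already-covered set $N(T)$, and $u^*$ is used, so the extension can only use edges between $(U\setminus T)\setminus\{u^*\}$ and $W\setminus N(T)$, giving $|M''|\le|N(T)|+(|U|-|T|-1)=\nu(H)-1$. Thus $M''$ is a maximal non-maximum matching, contradicting equimatchability.

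The step I expect to be the main obstacle is this last claim, and within it the bookkeeping that the extension of $M''$ reaches neither the left-over vertices of $T$ nor $u^*$, together with the existence of the pair $u^*,w^*$ in the first place --- which is exactly where connectivity and the hypothesis $|U|\le|W|$ (via $T\subsetneq U$) enter. Choosing $T$ of maximum deficiency is precisely what makes all of these pieces fit.
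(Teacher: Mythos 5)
Your proposal is correct, and it actually proves more than the paper does: in the paper this statement is only quoted from Lesk, Plummer and Pulleyblank~\cite{MR777180}, with no proof given, so there is no ``paper proof'' to match. What the paper does prove is Lemma~\ref{lem:bip-equim}, and its argument there is precisely your first step: the Hall-type equivalence between ``there is a non-empty $X\subseteq N(u)$ with $|N(X)|\le |X|$'' and ``every maximal matching covers $u$'' (via matchings of $N(u)$ into $U\setminus\{u\}$ in $H-u$). So your step 1 reproduces the paper's deduction of the lemma from the theorem, while your second part supplies the missing content of the cited theorem itself: that a connected bipartite $H$ with $|U|\le|W|$ and a maximal matching missing a vertex of $U$ (equivalently $\nu(H)<|U|$) cannot be equimatchable. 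Your maximum-deficiency argument for this is sound: $d=|U|-\nu(H)\ge 1$ forces $T\neq\emptyset$; $T=U$ would give $N(U)=W$ (connectivity) and $d=|U|-|W|\le 0$, so $T\subsetneq U$; maximality of the deficiency yields both the matchability of $N(T)$ into $T$ and that no vertex of $U\setminus T$ has all its neighbours in $N(T)$; connectivity then gives the edge $u^*w^*$ with $w^*\in N(T)$ (note $N(T)\neq\emptyset$ since $H$ is connected and the degenerate one-vertex case is vacuous); and the counting $|M''|\le(|N(T)|-1)+1+(|U|-|T|-1)=\nu(H)-1$ is exactly right because the $d+1$ uncovered vertices of $T$ see only the saturated set $N(T)$. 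In short: a correct, self-contained proof along defect-Hall lines, versus the paper's reliance on an external characterization; the only thing your route ``costs'' is the extra bookkeeping you already identified, and what it buys is independence from~\cite{MR777180}.
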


Given a graph $H$, we say that a vertex $v\in V(H)$ is {\it strong} if every maximal matching of $H$ covers $v$.
The above characterization has the following consequence.

\begin{lemma}\label{lem:bip-equim}
A connected bipartite graph $H$ is equimatchable if and only if it has a bipartition of its vertex set into two independent sets $V(H) = U\cup W$
such that all vertices of $U$ are strong.
\end{lemma}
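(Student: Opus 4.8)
The plan is to deduce both directions from the Lesk et al.\ characterization recorded in Theorem~\ref{thm:bip-equim}, working with the bipartition $V(H)=U\cup W$ of the connected bipartite graph $H$, which is unique up to swapping the two sides; for the forward direction I will fix it so that $|U|\le|W|$.

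For the (easier) backward implication, suppose $V(H)=U\cup W$ is a bipartition into independent sets in which every vertex of $U$ is strong. I would let $M$ be an arbitrary maximal matching and note that, since $H$ is bipartite, every edge of $M$ has exactly one endpoint in $U$; hence $|M|$ equals the number of vertices of $U$ covered by $M$, which is $|U|$ because every vertex of $U$ is strong. Thus all maximal matchings have size $|U|$, so $H$ is equimatchable. (This direction uses neither connectedness nor the inequality $|U|\le|W|$.)

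For the forward implication, assume $H$ is equimatchable and take the bipartition with $|U|\le|W|$; I claim every $u\in U$ is strong. Suppose not, so some maximal matching $M$ misses a vertex $u\in U$. Maximality forces every neighbor of $u$ to be $M$-covered (otherwise an uncovered neighbor would yield a larger matching). By Theorem~\ref{thm:bip-equim} there is a non-empty $X\subseteq N(u)$ with $|N(X)|\le|X|$. Each $x\in X\subseteq W$ is covered, hence matched by $M$ to a distinct vertex $M(x)\in U$; since $M(x)$ is a neighbor of $x$ and $M(x)\notin X$ (as $M(x)\in U$, $X\subseteq W$), we get $M(x)\in N(X)$. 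So $\{M(x):x\in X\}$ consists of $|X|$ distinct elements of $N(X)$, and $|N(X)|\le|X|$ forces $N(X)=\{M(x):x\in X\}$. But $u$ is adjacent to every vertex of the non-empty set $X$ (because $X\subseteq N(u)$) and $u\notin X$, so $u\in N(X)$; therefore $u=M(x)$ for some $x\in X$, contradicting that $M$ misses $u$. Hence every $u\in U$ is strong.

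The only substantive step is this last contradiction argument, and its crux is the observation that a maximal matching avoiding $u$ must saturate the entire neighborhood $N(u)$; combined with the deficiency-type inequality $|N(X)|\le|X|$ on a subset $X\subseteq N(u)$, this collapses $M$ into a perfect matching between $X$ and $N(X)$, and since $u\in N(X)$ this is impossible. I expect no further difficulties, as everything else is a routine counting argument.
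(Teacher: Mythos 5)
Your proof is correct, but it differs from the paper's in two respects worth noting. The paper first proves, via Hall's theorem applied to $H-u$, the full equivalence ``$u$ is strong $\Longleftrightarrow$ there is a non-empty $X\subseteq N(u)$ with $|N(X)|\le|X|$'', and then deduces both directions of the lemma from Theorem~\ref{thm:bip-equim}; in the converse direction the paper still routes through that theorem, using a maximal matching only to verify $|U|=|M|\le|W|$ so that the theorem's hypothesis on the side sizes applies. You instead (i) prove the backward implication directly from the definition of equimatchability --- every maximal matching covers all of $U$ and uses exactly one $U$-endpoint per edge, so all maximal matchings have size $|U|$ --- bypassing Theorem~\ref{thm:bip-equim} and the size comparison altogether, and (ii) in the forward direction you only need the implication ``deficient set $\Rightarrow$ strong'', which you establish by an elementary pigeonhole argument (a maximal matching missing $u$ saturates $N(u)$, injects $X$ into $N(X)\setminus\{u\}$, contradicting $|N(X)|\le|X|$ since $u\in N(X)$) rather than by Hall's theorem. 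Your route is slightly more elementary and self-contained; the paper's route buys the full characterization of strong vertices in bipartite graphs (both directions of the equivalence), which is the form of the statement it reuses conceptually elsewhere (e.g.\ in the algorithmic test of condition (c) in Corollary~\ref{cor:line}, where non-strongness is detected by a matching computation). Both arguments are complete and correct.
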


\begin{proof}
First, we show that a vertex $u\in V(H)$ in a bipartite graph $H$ is strong if and only if
there exists a non-empty set $X\subseteq N(u)$ such that $|N(X)|\le |X|$.
This is equivalent to showing that $u$ is not strong if and only if
$|N(X)| > |X|$ for all non-empty sets $X\subseteq N(u)$, which is further equivalent to
the condition that $|N_{H-u}(X)| \ge |X|$ for all non-empty sets $X\subseteq N_H(u)$.
By Hall's Theorem~\cite{Hall01011935}, this is equivalent to the existence of a
matching $M$ in the graph $H-u$ such that every vertex of $N_H(u)$ is incident with an edge in $M$.
This is in turn equivalent to the condition that
$H$ contains a maximal matching not covering $u$, that is, that
$u$ is not strong in $H$, as claimed.

Let $H$ be a connected bipartite graph. Suppose first that $H$ is equimatchable and fix a bipartition of its vertex set into two independent sets $V(H) = U\cup W$ with $|U|\le |W|$. Theorem~\ref{thm:bip-equim} implies that for all $u\in U$, there exists a non-empty set $X\subseteq N(u)$ such that
$|N(X)|\le |X|$. By the above equivalence, all vertices in $U$ are strong. Conversely, suppose that $H$ is a connected bipartite graph with a bipartition of its vertex set into two independent sets $V(H) = U\cup W$ such that all vertices of $U$ are strong. By the above equivalence, for all $u\in U$, there exists a non-empty set $X\subseteq N(u)$ such that $|N(X)|\le |X|$. Let $M$ be a maximal matching of $H$. By assumption on $U$, every vertex of $U$ is incident with an edge of $M$. This implies that $|U| = |M|\le |W|$. Thus, by Theorem~\ref{thm:bip-equim}, $H$ is equimatchable.
\end{proof}

As pointed out above, the line graph of every equimatchable bipartite graph is localizable.
In what follows, we will show that graphs the line graphs of which is localizable do not differ too much
from equimatchable bipartite graphs. To describe the result, we need a couple of definitions.

\begin{definition}\label{def:pendant}
The \emph{diamond} is the graph with vertices $a,b,c,d$ and edges $ab,ac,bc,bd,cd$; the vertices $a$ and $d$ are its \emph{tips}.
Let $T$ be a triangle in a graph $H$ and let $a,b,c$ be the vertices of $T$.
We say that $T$ is:
\begin{itemize}
  \item a \emph{pendant triangle} of $H$ if $d_H(a) = d_H(b) = 2<d_H(c)$,
  \item \emph{contained in a pendant diamond} if $H$ has a subgraph
with vertices $a,b,c,d$ inducing a diamond with tips $a$ and $d$ such that $d_H(a) = 2$ and $d_H(b) = d_H(c) = 3\le d_H(d)$, and
  \item \emph{contained in a pendant $K_4$} if $H$ has a subgraph with vertices $a,b,c,d$ inducing a $K_4$ such that $d_H(a) = d_H(b) = d_H(c) = 3<d_H(d)$.
\end{itemize}
Any diamonds and $K_4$s as above will be referred to as \emph{pendant diamonds} and \emph{pendant $K_4$s} (of $H$), respectively.
Pendant triangles, pendant diamonds, and pendant $K_4$s of $H$ will be referred to briefly as its \emph{pendant subgraphs}; see Figure~\ref{fig:triangles}. Note that if a graph $H$ has a pendant subgraph, then $H$ is not isomorphic to any graph in the set $\{K_3$, $K_4$, diamond$\}$. Moreover,
every pendant subgraph of $H$ has a unique {\it root}, that is, a vertex connecting it to the rest of the graph.
We denote by $\Rt(H)$, $\Rd(H)$, and $\RK(H)$ the sets of roots of all pendant triangles, pendant diamonds, and
pendant $K_4$s of $H$, respectively. The \emph{pendant reduction} of $H$ is the graph obtained from $H$ by deleting
from $H$ all non-root vertices of its pendant subgraphs.
\end{definition}

\begin{figure}[!ht]
  \begin{center}
\includegraphics[width=0.75\linewidth]{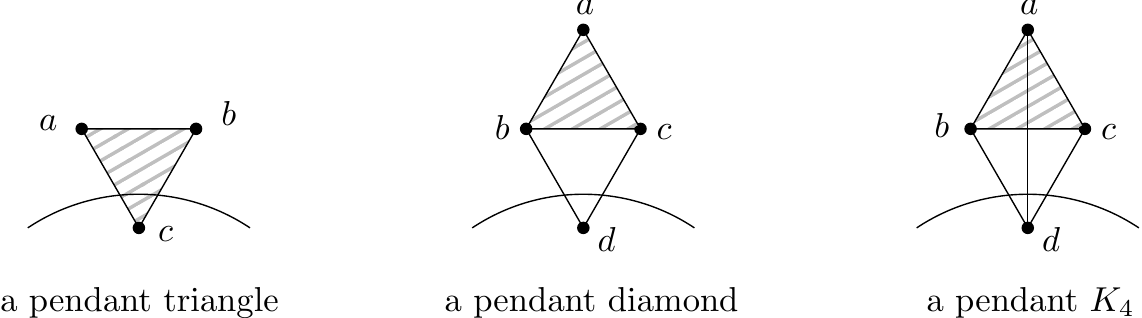}
  \end{center}
\caption{The three types of pendant subgraphs. The shaded triangles represent a pendant triangle, a triangle contained in a pendant diamond, and
a triangle contained in a pendant $K_4$, respectively.} \label{fig:triangles}
\end{figure}

The characterization of localizable line graphs is given by the following theorem.

\begin{theorem}\label{thm:line}
Let $H$ be a connected graph and let $G = L(H)$. Then, $G$ is localizable if and only if one of the following holds:
\begin{enumerate}
\item $H$ is not isomorphic to any graph in the set $\{K_3$, $K_4$, diamond$\}$ and its pendant reduction
$F$ is a connected bipartite graph with a bipartition of its vertex set into two independent sets $U$ and $W$ such that
  \begin{enumerate}[(a)]
    \item $\Rd(H)\cup \RK(H)\subseteq U$,
     \item $\Rt(H) \subseteq W$, and
 \item each vertex $u\in U\setminus \RK(H)$ is strong in the graph $F-(N_F(u)\cap \Rt(H))$.
  \end{enumerate}
    \item $H$ is isomorphic to either $K_3$ or $K_4$.
\end{enumerate}
\end{theorem}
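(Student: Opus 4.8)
The plan is to translate everything into the language of matchings in $H$, since strong cliques in $G = L(H)$ correspond to maximal matchings of $H$ intersecting every other maximal matching, i.e., to sets of edges $S$ such that $S$ is either a maximal star or a triangle and every maximal matching of $H$ uses an edge of $S$. First I would record the structure of maximal cliques in a line graph (stars and triangles, as in the proof of Lemma~\ref{lem:complements-of-line-graphs}) and determine exactly which of them are strong. A star $E(v)$ is strong in $L(H)$ iff every maximal matching of $H$ covers $v$, that is, iff $v$ is a strong vertex; a triangle $T$ of $H$ is strong in $L(H)$ iff every maximal matching of $H$ contains an edge of $T$. So a partition of $V(G) = E(H)$ into strong cliques is a partition of $E(H)$ into strong stars and ``strong triangles.'' The main structural observation to establish is that in such a partition the triangles must come in the three pendant flavours of Definition~\ref{def:pendant}: if a triangle $T = \{a,b,c\}$ of the partition is strong and $e\in E(H)$ is an edge at, say, $a$ that lies outside $T$, then $e$ belongs to some star $E(v)$ of the partition with $v\in\{a,b,c\}$ forced (else a maximal matching avoiding $T$ exists), and pushing this argument around the low-degree vertices of $T$ pins down $d_H(a),d_H(b),d_H(c)$ to exactly the pendant-triangle / pendant-diamond / pendant-$K_4$ profiles; the degenerate cases where $H$ itself is $K_3$, $K_4$, or the diamond are the exceptions listed in the theorem and are checked by hand.

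Next I would peel off the pendant subgraphs. Having shown that every triangle in a strong-clique partition of $E(H)$ is a triangle of a pendant subgraph, and that conversely each edge of a pendant triangle/diamond/$K_4$ must lie in such a triangle of the partition (the non-root vertices have too small degree to be centres of strong stars otherwise), I would argue that a strong-clique partition of $E(H)$ restricts to a canonical partition of the edges of each pendant subgraph (for a pendant triangle: the triangle itself; for a pendant diamond with tips $a,d$: the triangle $\{b,c,d\}$ — wait, one must check which triangle — together forcing the remaining two edges; for a pendant $K_4$: the two triangles partitioning its six edges), leaving the edges of the pendant reduction $F$ of $H$ to be partitioned into strong stars of $F$. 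The bookkeeping here — that strongness in $G$ of a star centred at a vertex $u$ of $F$ translates, after the pendant edges have been committed, precisely to $u$ being a strong vertex of the modified graph $F - (N_F(u)\cap\Rt(H))$ — is where conditions (a), (b), (c) come from: the root of a pendant triangle has its incident triangle-edge already ``used up'' in covering that triangle, so it sits on the $W$ side and is not required to be strong, whereas the roots of pendant diamonds and $K_4$s behave like ordinary $U$-side vertices whose stars must still be strong; a pendant-$K_4$ root is automatically handled because two of the three triangles at it are in the partition, whence the exception $U\setminus\RK(H)$ in~(c).

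Then I would invoke the bipartite theory. Once $E(H)$ minus the pendant edges is partitioned into strong stars of $F$, the centres of these stars form a set $U$ that meets every maximal matching of $F$ (suitably modified), and an easy double-counting / Hall-type argument — exactly as in Lemma~\ref{lem:bip-equim} — shows $F$ must be bipartite with the star-centres on one side: no edge of $F$ can have both endpoints among star-centres (two adjacent strong stars would overlap in that edge), and no edge can have neither endpoint a centre. Conversely, given a connected bipartite $F$ with bipartition $U\cup W$ satisfying (a)–(c), the stars $\{E_F(u): u\in U\}$ together with the canonical pendant-subgraph triangles give a partition of $E(H)$ into cliques of $G$, and conditions (a)–(c) combined with Lemma~\ref{lem:bip-equim} (applied to $F$ and its modifications) guarantee each of these cliques is strong — here I use that a star $E_F(u)$ is strong in $G$ iff $u$ is covered by every maximal matching of the relevant subgraph, which is the content of the ``strong vertex'' characterization, and that each pendant triangle is strong because any maximal matching, after it is forced to cover the low-degree vertices, must use one of the triangle's edges.

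I expect the main obstacle to be the forcing argument that nails down the three pendant profiles and shows there are no other ways a triangle can be strong in a partition: one has to carefully chase, for each vertex of a partition-triangle $T$ and each edge leaving $T$ at that vertex, which star of the partition contains that edge and why the alternatives produce a maximal matching avoiding $T$, and then show the resulting local picture around $T$ is forced to be one of pendant triangle, pendant diamond, or pendant $K_4$ (in particular, ruling out degrees $\ge 4$ on the ``inner'' vertices and ruling out configurations where two of $a,b,c$ share an outside neighbour unless that closes up into a diamond or $K_4$). The second delicate point is getting conditions (a)–(c) exactly right — in particular the asymmetry that pendant-triangle roots go to $W$ while pendant-diamond and pendant-$K_4$ roots go to $U$, and the ``$u\in U\setminus\RK(H)$'' and ``$F-(N_F(u)\cap\Rt(H))$'' refinements — which requires tracking precisely which incident edges of each root have already been committed to a triangle of the partition before asking whether the remaining star at that root can be made strong.
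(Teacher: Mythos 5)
Your overall plan coincides with the paper's proof: translate localizability of $L(H)$ into the existence of an independent set of strong vertices together with a set of strong triangles decomposing the rest of $H$ (Lemma~\ref{lem:line}), identify the strong triangles with the pendant configurations (Lemma~\ref{lem:strong-triangles}), peel them off to the pendant reduction $F$, and convert strongness of the remaining stars into the strong-vertex conditions (a)--(c) on a bipartition of $F$, verified by bipartite matching as in Lemma~\ref{lem:bip-equim}. One remark: strongness of a triangle is an absolute property of $H$ (a triangle is strong if and only if it lies in no bull, if and only if it is of one of the three pendant types), so the ``forcing around the partition'' that you single out as the main obstacle is not needed; combined with the fact that strong triangles are pairwise edge-disjoint unless $H\in\{K_4,\text{diamond}\}$ (Lemma~\ref{lem:K4-diamond}), this determines the triangles of any certifier directly.

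The genuine gap is in your pendant bookkeeping, which is exactly where conditions (a)--(c) come from. First, $K_4$ has no two edge-disjoint triangles, so the six edges of a pendant $K_4$ cannot be ``partitioned by two triangles'': the only strong triangle of a pendant $K_4$ is the one avoiding the root, and the three edges at the root can only be absorbed by the star centred at the root --- this is precisely what forces $\RK(H)\subseteq U$ in (a). Consequently your explanation of the exception $U\setminus\RK(H)$ in (c) (``two of the three triangles at it are in the partition'') is also incorrect; the correct reason is that a pendant-$K_4$ root is automatically a strong vertex of $H$: a maximal matching missing the root can use at most one edge of the triangle $\{a,b,c\}$, and since $a,b,c$ have all their neighbours inside the $K_4$, some neighbour of the root is left uncovered, contradicting maximality. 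Hence no condition on $F$ is needed for such roots, whereas pendant-diamond roots are \emph{not} automatically strong and must satisfy (c). Second, in a pendant diamond the canonical strong triangle is the one through the degree-two tip, namely $\{a,b,c\}$, not $\{b,c,d\}$ (the latter lies in a bull, and the leftover edges $ab,ac$ could only sit in the non-maximal, hence non-strong, star at $a$); the edges $bd,cd$ are then covered by the star at the root $d$, which is what forces $\Rd(H)\subseteq U$. You flagged this case as ``to be checked,'' but with the wrong triangle both the forward derivation of (a) and the converse construction of the strong-clique partition break down. Once these two local decompositions are corrected, the rest of your outline tracks the paper's argument.
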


Before proving Theorem~\ref{thm:line}, let us note two of its consequences.

\begin{corollary}\label{cor:line-of-triangle-free}
Let $H$ be a connected triangle-free graph and let $G = L(H)$.
Then, $G$ is localizable if and only if $H$ is an equimatchable bipartite graph.
\end{corollary}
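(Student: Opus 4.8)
The plan is to derive this as a direct specialization of Theorem~\ref{thm:line}, with Lemma~\ref{lem:bip-equim} used to translate the surviving condition into equimatchability. The first step is to record two structural consequences of triangle-freeness. Since each of a pendant triangle, a triangle contained in a pendant diamond, and a triangle contained in a pendant $K_4$ is, by Definition~\ref{def:pendant}, a triangle of $H$, a triangle-free graph $H$ has no pendant subgraphs at all; hence $\Rt(H) = \Rd(H) = \RK(H) = \emptyset$ and the pendant reduction $F$ of $H$ coincides with $H$. Moreover $K_3$, $K_4$, and the diamond each contain a triangle, so $H$ is not isomorphic to any of them, and in particular alternative~2 of Theorem~\ref{thm:line} cannot occur for such $H$.

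Next I would substitute these facts into alternative~1 of Theorem~\ref{thm:line}. Conditions (a) and (b) become $\emptyset\subseteq U$ and $\emptyset\subseteq W$, which hold vacuously, while condition (c) becomes ``each $u\in U$ is strong in $F-(N_F(u)\cap\emptyset)=F=H$''. Thus Theorem~\ref{thm:line} specializes, for connected triangle-free $H$, to the statement that $G=L(H)$ is localizable if and only if $H$ is a connected bipartite graph admitting a bipartition $V(H)=U\cup W$ into independent sets in which every vertex of $U$ is strong. By Lemma~\ref{lem:bip-equim}, for a connected bipartite graph the existence of such a bipartition is exactly equimatchability, so $G=L(H)$ is localizable if and only if $H$ is an equimatchable bipartite graph, as claimed.

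For completeness I would then spell out the two implications. Forward: if $G$ is localizable, then since alternative~2 is excluded, alternative~1 must hold, so $H=F$ is bipartite and carries a bipartition with every vertex of $U$ strong in $H$, whence $H$ is equimatchable and bipartite by Lemma~\ref{lem:bip-equim}. Reverse: if $H$ is equimatchable and bipartite, then by Lemma~\ref{lem:bip-equim} it has a bipartition $U\cup W$ with all vertices of $U$ strong in $H$, so alternative~1 of Theorem~\ref{thm:line} is met with $F=H$ and all root sets empty, and $G$ is localizable.

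Since the argument is essentially bookkeeping layered on top of Theorem~\ref{thm:line} and Lemma~\ref{lem:bip-equim}, I do not expect a genuine obstacle; the only point needing care is verifying that the three substantive conditions (a), (b), (c) of alternative~1 all collapse — to something vacuous, or to precisely the hypothesis of Lemma~\ref{lem:bip-equim} — once one knows that the root sets are empty and $F=H$, together with the small observation that alternative~2 is automatically ruled out by triangle-freeness.
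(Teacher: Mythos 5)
Your proposal is correct and matches the paper's proof: the paper likewise observes that triangle-freeness forces $F=H$ and $\Rt(H)=\Rd(H)=\RK(H)=\emptyset$, whereupon the corollary follows immediately from Theorem~\ref{thm:line} and Lemma~\ref{lem:bip-equim}. Your additional bookkeeping (ruling out alternative~2 and collapsing conditions (a)--(c)) is just a more explicit spelling-out of the same specialization.
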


\begin{proof}
If $H$ is triangle-free, then its pendant reduction equals $H$ and
\hbox{$\Rd(H)= \RK(H)= \Rt(H)  = \emptyset$}. Thus, the statement of the corollary is an immediate consequence of
Theorem~\ref{thm:line} and Lemma~\ref{lem:bip-equim}.
\end{proof}

\begin{corollary}\label{cor:line}
There exists a polynomial time algorithm for the problem of determining if a given line graph is localizable.
\end{corollary}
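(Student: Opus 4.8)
The plan is to turn the structural characterization of Theorem~\ref{thm:line} into an algorithm, verifying each of its conditions in polynomial time. First I would reduce to the connected case: a maximal independent set of a graph is the disjoint union of maximal independent sets of its connected components, so a clique contained in a component $G_i$ is strong in $G$ if and only if it is strong in $G_i$; hence $G$ is localizable if and only if every component of $G$ is localizable, and it suffices to decide localizability of a connected line graph $G$. Given such a $G$, I would run the polynomial-time line-graph recognition and root-graph algorithm of~\cite{MR0424435} to obtain, after discarding isolated vertices, a connected graph $H$ with $G = L(H)$; since Theorem~\ref{thm:line} characterizes the localizability of $G$ through checkable conditions on \emph{any} such $H$, the particular root graph returned is immaterial (by Whitney's theorem the only line graph admitting more than one connected root graph is $K_3$).

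Next I would test the conditions of Theorem~\ref{thm:line} on $H$. If $H$ is isomorphic to $K_3$ or to $K_4$ we report that $G$ is localizable; if $H$ is the diamond we report that it is not; these finitely many small graphs are handled by inspection. Otherwise I would compute the pendant reduction of $H$: by enumerating the $O(|V(H)|^3)$ triangles of $H$ and reading off the degrees of their vertices, and of the further vertices of any diamond or $K_4$ extending them, one identifies in polynomial time all pendant triangles, pendant diamonds and pendant $K_4$s together with their roots, hence the sets $\Rt(H)$, $\Rd(H)$, $\RK(H)$ and the graph $F$ obtained by deleting all non-root vertices of these pendant subgraphs. I would then check whether $F$ is connected and bipartite by a standard search, and, if it is, compute its bipartition $V(F) = U \cup W$, which for a connected bipartite graph is unique up to interchanging the two sides. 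Trying both assignments of the sides to $U$ and $W$, I would verify conditions (a) and (b) of Theorem~\ref{thm:line}, namely $\Rd(H)\cup\RK(H)\subseteq U$ and $\Rt(H)\subseteq W$, each of which is a set-containment test.

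What remains is condition (c): for every $u \in U\setminus\RK(H)$, the vertex $u$ must be strong in the bipartite graph $F' := F - (N_F(u)\cap\Rt(H))$. Here I would invoke the characterization established inside the proof of Lemma~\ref{lem:bip-equim}: a vertex $u$ of a bipartite graph is strong precisely when there is a non-empty set $X \subseteq N(u)$ with $|N(X)| \le |X|$, equivalently, by Hall's theorem~\cite{Hall01011935}, precisely when $F'-u$ has no matching saturating $N_{F'}(u)$. This last condition is decided by a single maximum-matching (or maximum-flow) computation in the bipartite graph $F'-u$, so condition (c) is checked in polynomial time by at most $|U|$ such computations. Assembling these steps yields a polynomial-time algorithm whose correctness is exactly Theorem~\ref{thm:line}. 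The corollary is thus essentially immediate once that theorem is available; the only step requiring genuine thought is the matching reformulation of condition (c) described above (already supplied by Lemma~\ref{lem:bip-equim}), while recognizing the pendant subgraphs and their roots — and taking care that a vertex lying in several triangles does not confuse the degree conditions of Definition~\ref{def:pendant} — is routine bookkeeping.
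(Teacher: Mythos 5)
Your proposal is correct and follows essentially the same route as the paper: reduce to connected components, recover $H$ with $G=L(H)$ via~\cite{MR0424435}, dispose of the small cases, compute the pendant subgraphs, their roots, and the pendant reduction $F$, check bipartiteness and conditions (a)--(b) directly, and reduce condition (c) to at most $|U|$ bipartite maximum-matching computations via the Hall-type characterization of strong vertices from Lemma~\ref{lem:bip-equim}. The only cosmetic difference is that the paper performs each matching computation in the subgraph of $F'$ induced by vertices at distance at most two from $u$ rather than in all of $F'-u$, which does not affect polynomiality.
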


\begin{proof}
Let $G$ be a given line graph. Since a graph $H$ such that $G = L(H)$ can be computed in linear time~\cite{MR0424435}, we may assume that we know $H$.
Moreover, since $G$ is localizable if and only if each component of $G$ is localizable, we may assume that $G$ (and therefore $H$) is connected.
We may also assume that $H$ is not isomorphic to any graph in the set $\{K_3$, $K_4$, diamond$\}$.

We apply Theorem~\ref{thm:line}. We compute the pendant subgraphs of $H$, the sets $\Rt(H)$, $\Rd(H)$, and $\RK(H)$ of their roots, and the pendant reduction $F$ of $H$. We may also assume that $F$ is a connected bipartite graph, with parts $U$ and $W$. Under these assumptions,
to verify whether $L(H)$ is localizable we only need to verify whether there exists a part of the bipartition of $F$, say $U$, satisfying the following conditions:
\begin{enumerate}[(a)]
  \item $\Rd(H)\cup \RK(H)\subseteq U$.
  \item $\Rt(H) \cap U = \emptyset$, and
  \item each vertex $u\in U\setminus \RK(H)$ is strong in the graph $F-(N_F(u)\cap \Rt(H))$.
\end{enumerate}
All the above computations as well as the verification of conditions (a) and (b) can be carried out in linear time.
It only remains to justify that conditions (c) can be tested in polynomial time. Note that
a vertex $u\in U\setminus \RK(H)$ is not strong in the graph $F' = F-(N_F(u)\cap \Rt(H))$ if and only if
$F'$ has a matching $M$ such that every neighbor of $u$ is incident with an edge of $M$.
The existence of such a matching can be determined by a maximum matching computation in the
(bipartite) subgraph of $F'$ induced by the vertices of distance $1$ or $2$ from $u$.
Thus, condition (c) can be verified in polynomial time by carrying out $O(|U|)$ bipartite matching computations,
which is well-known to be polynomially solvable (see, e.g.,~\cite{MR0337699}).
\end{proof}

In the rest of this section, we prove Theorem~\ref{thm:line}. This will be done through a sequence of lemmas.
Our first lemma gives a translation of the property that $G = L(H)$ is localizable to the graph $H$. Recall that a vertex $v\in V(H)$ is said to be {\it strong} if every maximal matching of $H$ covers $v$. We say that a triangle $T$ in a graph $H$ is {\it strong} if every maximal matching of $H$ contains an edge of $T$. A \emph{decomposition} of a graph $H$ is a set ${\cal F}$ of subgraphs of $H$ such that each edge of $H$ appears in exactly one subgraph from ${\cal F}$ (we also say that ${\cal F}$ \emph{decomposes} $H$).

\begin{lemma}\label{lem:line}
Let $G = L(H)$. Then, $G$ is localizable if and only if $H$ contains an independent set $S$ of strong vertices
and a set ${\cal T}$ of strong triangles decomposing $H-S$.
\end{lemma}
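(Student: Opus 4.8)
The plan is to translate, step by step, the combinatorial structure of a partition of $V(G)=E(H)$ into strong cliques of $G=L(H)$ into the structure described on the $H$-side. Recall (as in the proof of Lemma~\ref{lem:complements-of-line-graphs}) that the maximal cliques of $L(H)$ are exactly the maximal elements of ${\cal T}\cup{\cal S}$, where ${\cal T}$ is the set of edge-sets of triangles of $H$ and ${\cal S}$ is the set of stars $E(v)$, $v\in V(H)$; and that maximal independent sets of $L(H)$ correspond exactly to maximal matchings of $H$. Every strong clique of $G$ is in particular a maximal clique, hence is either (the edge-set of) a triangle of $H$ or a maximal star $E(v)$. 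So a partition of $E(H)$ into strong cliques of $L(H)$ is precisely a decomposition of $H$ into triangles and stars $E(v)$ in which each part is strong in $L(H)$; I will denote by $S$ the set of vertices $v$ whose star $E(v)$ is used, and by ${\cal T}$ the set of triangles used. The first step is to observe that since two distinct stars $E(u)$, $E(v)$ share the edge $uv$ whenever $uv\in E(H)$, and a triangle on $\{a,b,c\}$ meets $E(v)$ for $v\in\{a,b,c\}$, the fact that the parts partition $E(H)$ forces $S$ to be an independent set of $H$ and forces ${\cal T}$ to decompose the graph $H-S$ (every edge of $H$ not incident to $S$ must be covered by a triangle in ${\cal T}$, and no edge incident to $S$ can lie in a triangle of ${\cal T}$, since that triangle would meet $E(v)$ in more than the shared edge — more precisely, a triangle through a vertex $v\in S$ uses two edges of $E(v)$, contradicting disjointness of the parts).

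The second step is to identify what "strong" means for each type of part. For a star $E(v)$: a maximal independent set of $L(H)$ (i.e.\ a maximal matching $M$ of $H$) is disjoint from $E(v)$ iff $M$ covers no edge at $v$, i.e.\ iff $M$ misses $v$; hence $E(v)$ is a strong clique of $L(H)$ iff every maximal matching of $H$ covers $v$, i.e.\ iff $v$ is a strong vertex of $H$. For a triangle $T$ on $\{a,b,c\}$: a maximal matching $M$ is disjoint from the clique $E(T)=\{ab,bc,ca\}$ iff $M$ contains none of these three edges; so $E(T)$ is strong in $L(H)$ iff every maximal matching of $H$ contains an edge of $T$, i.e.\ iff $T$ is a strong triangle of $H$ in the sense defined just before the lemma. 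Combining the two steps: $G=L(H)$ is localizable iff $E(H)$ admits a partition into strong cliques, iff there is an independent set $S\subseteq V(H)$ of strong vertices and a set ${\cal T}$ of strong triangles that together partition $E(H)$ as $\{E(v):v\in S\}\cup\{E(T):T\in{\cal T}\}$, iff $S$ is an independent set of strong vertices and ${\cal T}$ is a set of strong triangles decomposing $H-S$. For the last "iff" one must also check the converse direction: given such $S$ and ${\cal T}$, the collection $\{E(v):v\in S\}\cup\{E(T):T\in{\cal T}\}$ is indeed a partition of $E(H)$ into cliques of $L(H)$ — the edges incident to $S$ are covered exactly once because $S$ is independent, and the remaining edges are covered exactly once because ${\cal T}$ decomposes $H-S$ — and each part is strong by the characterizations above; there is one small point, namely that each $E(v)$ with $v\in S$ and each $E(T)$ must actually be a (maximal) clique of $L(H)$, which is automatic for $E(T)$ and holds for $E(v)$ unless $\deg_H(v)\le 1$, a case one disposes of by noting a strong vertex of degree $0$ cannot exist and a pendant vertex's star $\{uv\}$ is a clique anyway (and in the degenerate tiny cases the statement is checked directly).

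The main obstacle, and the point requiring the most care, is the bookkeeping in the "partition forces the structure" direction: one must rule out that a triangle in ${\cal T}$ shares an edge with a star $E(v)$, $v\in S$, or with another triangle, and that two stars in the cover overlap — all of which reduces to the single clean fact that in $L(H)$ two distinct maximal cliques can intersect in at most one vertex of $L(H)$ when at least one of them is a star (two stars $E(u),E(v)$ meet in $\le 1$ vertex, a star and a triangle meet in $\le 2$, and a used triangle through $v\in S$ would put two edges of $E(v)$ into two different parts). A disjoint cover therefore cannot use a triangle incident to $S$, so ${\cal T}$ lives in $H-S$ and, by counting, decomposes it. Once this is pinned down, the equivalences of the two preceding steps assemble immediately into the statement of the lemma.
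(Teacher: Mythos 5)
Your proposal is correct and follows essentially the same route as the paper's proof: identify the strong cliques in the partition of $V(L(H))=E(H)$ with stars and triangles of $H$, translate maximal independent sets of $L(H)$ into maximal matchings of $H$ so that strong star cliques correspond to strong vertices and strong triangle cliques to strong triangles, and check that the star centers form an independent set $S$ with the triangles decomposing $H-S$ (and conversely). Your extra bookkeeping about maximality of the parts and degenerate small stars is harmless but not needed, since strongness is verified directly against all maximal matchings, exactly as in the paper.
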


\begin{proof}
Consider a graph $H$ and suppose that its line graph, $G= L(H)$, is localizable. Thus, the vertices of $G$ can be partitioned into $k=\alpha(G)$ strong cliques $C_1,\ldots,C_k$. Each clique in $G$ corresponds to either a triangle in $H$ or to a star of $H$, that is, to a set of edges of the form $E(v) = \{e\in E(H): v$ is an endpoint of $e\}$ for some $v\in V(H)$ (in which case we say that $E(v)$ is the star {\it centered at $v$}). Moreover, these triangles $T_1,\ldots,T_{k'}$ and stars $S_{k'+1},\ldots,S_k$ form a partition of the edge set of $H$. It follows that the centers of the stars form an independent set, say $S$, of $H$, and the triangles $T_1,\ldots,T_{k'}$ are subgraphs of $H-S$ decomposing $H-S$.
Since every maximal independent set of $G$ intersects every strong clique $C_1,\ldots,C_k$, we deduce that every maximal matching in $H$ intersects every triangle $T_i$ and every star $S_j$ for $i=1,\ldots,k'$ and $j=k'+1,\ldots,k$, that is, each triangle $T_i$ is strong and each vertex in $S$ is strong.

Conversely, suppose that $H$ contains an independent set $S$ of strong vertices and a set ${\cal T}$ of strong triangles decomposing $H-S$. Every triangle in $\mathcal{T}$ corresponds to a clique in $G=L(H)$ and every star with center $s\in S$ also corresponds to a clique in $G$. This clearly gives us a partition of the vertex set of $G$ into cliques $C_1,\ldots,C_k$. Furthermore, since every maximal matching in $H$ covers $S$ and contains an edge from each triangle in ${\cal T}$, it follows that every maximal independent set of $G$ intersects every maximal clique $C_1,\ldots,C_k$, and thus each of these cliques is strong. We conclude that $G$ is localizable.
\end{proof}

Given a graph $H$ such that $L(H)$ is localizable, a pair $(S,{\cal T})$ such that $S$ is an independent set of strong vertices in $H$ and ${\cal T}$ is a set of strong triangles decomposing $H-S$ will be referred to as a {\it line-localizability certifier} of $H$.

The next lemma characterizes strong triangles of $H$. A {\it bull} is a graph with vertices $a,b,c,d,e$ and edges $ab,bc,cd,be,ce$.
A triangle $T$ of $H$ is said to be {\it contained in a bull} if there exists a subgraph of $H$ isomorphic
to a bull that contains $T$ as a subgraph.

\begin{lemma}\label{lem:strong-triangles}
For every triangle $T$ in a graph $H$, the following conditions are equivalent:
 \begin{enumerate}
   \item $T$ is strong in $H$.
   \item $T$ is not contained in any bull.
   \item $T$ is either a pendant triangle, or is contained in a pendant diamond or in a pendant $K_4$.
 \end{enumerate}
\end{lemma}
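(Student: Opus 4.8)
\textbf{Proof plan for Lemma~\ref{lem:strong-triangles}.}

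The plan is to prove the cycle of implications $1\Rightarrow 2\Rightarrow 3\Rightarrow 1$, since the chain closes all three equivalences economically. For $1\Rightarrow 2$, I would argue contrapositively: suppose $T$ is contained in a bull $B$ with vertices $a,b,c,d,e$ and edges $ab,bc,cd,be,ce$, so that $T$ must be the triangle $bce$ (the only triangle in a bull). The idea is to exhibit a maximal matching of $H$ avoiding all three edges of $T$. Take the two ``horn'' edges $ab$ and $cd$; these are disjoint (since $a,b,c,d$ are distinct and $ad,bd,ac,ae\notin E(B)$ among the bull edges, though of course $H$ may have more edges). I would start the matching with $\{ab,cd\}$, which already saturates $b$ and $c$; then $e$ is the only vertex of $T$ still free, and its two $T$-neighbours $b,c$ are matched, so any extension to a maximal matching either leaves $e$ uncovered or matches $e$ to a vertex outside $\{b,c\}$ — in either case no edge of $T=\{bc,be,ce\}$ is used. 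Extending $\{ab,cd\}$ greedily to a maximal matching $M$ of $H$ thus yields a maximal matching missing $T$, so $T$ is not strong.

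For $3\Rightarrow 1$, I would check each of the three cases directly. If $T=abc$ is a pendant triangle with $d_H(a)=d_H(b)=2<d_H(c)$, then in any maximal matching $M$, vertex $a$ (degree $2$, neighbours $b,c$) must be matched — to $b$ or to $c$ — and either way $M$ contains an edge of $T$. If $T$ is contained in a pendant diamond on $a,b,c,d$ with $d_H(a)=2$, $d_H(b)=d_H(c)=3$, tips $a,d$ (so the triangle in question is $abc$ or $bcd$; the argument is the same): the degree-$2$ vertex $a$ has neighbours $b,c$, so it is matched to $b$ or $c$ in any maximal $M$, giving an edge of the shaded triangle. If $T$ is contained in a pendant $K_4$ on $a,b,c,d$ with $d_H(a)=d_H(b)=d_H(c)=3<d_H(d)$, the three degree-$3$ vertices $a,b,c$ have all their neighbours inside $\{a,b,c,d\}$; a maximal matching can use at most one edge incident to $d$, hence at least two of $a,b,c$ are matched within $K_4$, and since $a,b,c$ are mutually adjacent at least one matching edge lies in $\{ab,ac,bc\}\subseteq$ the $K_4$, so an edge of any of the four triangles of the $K_4$ is covered — in particular the shaded one. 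In each case every maximal matching hits $T$, so $T$ is strong.

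The substantive step is $2\Rightarrow 3$, and I expect this to be the main obstacle. Here I would assume $T=abc$ is a triangle not contained in any bull and deduce the rigid local structure. First, no vertex of $T$ has a neighbour outside $T$ that is adjacent to a second vertex of $T$ in a way that creates a bull, so I would analyse the external neighbourhoods $N_H(a)\setminus\{b,c\}$, etc. The key observation is: if some $x\in N_H(a)\setminus\{b,c\}$ is non-adjacent to both $b$ and $c$, and similarly some $y\in N_H(b)\setminus\{a,c\}$ is non-adjacent to both $a$ and $c$ with $x\ne y$, then $x,a,b,y$ together with $c$ form a bull (edges $xa,ab,by,ac,bc$), contradiction. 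Careful case analysis on how many vertices of $T$ have ``private'' external neighbours, and whether those private neighbours coincide, will force one of three configurations: (i) two vertices of $T$ have degree $2$ and the third has larger degree (pendant triangle); (ii) one vertex of $T$ has degree $2$, and the external neighbours of the other two are forced to be a single common vertex $d$ adjacent to both but with the degree constraints $d_H(b)=d_H(c)=3$ (pendant diamond); (iii) all three vertices' external neighbours collapse to one common vertex $d$, giving a $K_4$, with $d_H(a)=d_H(b)=d_H(c)=3$ (pendant $K_4$). Ruling out every other possibility — e.g.\ showing that if two vertices of $T$ keep distinct private external neighbours one always produces a bull, and that a vertex of $T$ of degree $\ge 4$ forces the remaining vertices to have degree exactly $2$ — is where the bookkeeping lives; I would organize it by the multiset of external degrees $\bigl(d_H(a)-2,\,d_H(b)-2,\,d_H(c)-2\bigr)$ and eliminate each offending pattern with an explicit bull.
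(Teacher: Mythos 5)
Your overall plan (a cycle $1\Rightarrow 2\Rightarrow 3\Rightarrow 1$) is workable, and your $1\Rightarrow 2$ step is correct, but the heart of the lemma, $2\Rightarrow 3$, is left as a sketch whose stated ``key observation'' is too weak to close the case analysis. In the paper, ``$T$ is contained in a bull'' means containment as a \emph{subgraph}, not an induced subgraph; consequently, a bull containing $T=\{a,b,c\}$ exists as soon as two \emph{distinct} vertices of $T$ have \emph{distinct} neighbours outside $T$ --- no non-adjacency conditions whatsoever are needed on the horn tips. Your observation instead requires $x$ to be non-adjacent to both $b,c$ and $y$ to both $a,c$, so it fails to exclude, e.g., the configuration where an external vertex $x$ is adjacent to both $a$ and $b$ while some other external vertex $y$ is adjacent to $c$: this is not one of the three target configurations, yet it must be ruled out (and it is, via the bull with horns $xa$ and $yc$ read as a subgraph). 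Worse, if you genuinely intend induced bulls, the implication $2\Rightarrow 3$ (and hence $1\Leftrightarrow 2$) is false: take $H$ a $K_4$ on $\{a,b,c,x\}$ with a pendant vertex $y$ attached to $c$; then $T=abc$ is not strong (the matching $\{ax,cy\}$ extends to a maximal matching avoiding $E(T)$), is not of any of the three pendant types, and is contained in no induced bull. With the subgraph reading the whole step collapses to the paper's short argument: letting $U$ be the set of vertices of $T$ with a neighbour outside $T$, the no-bull hypothesis forces all external neighbours of the vertices in $U$ to be one common vertex whenever $|U|\ge 2$, and the cases $|U|\le 1$, $|U|=2$, $|U|=3$ give a pendant triangle, a pendant diamond, and a pendant $K_4$, respectively. (The paper also gets $1\Leftrightarrow 2$ in one stroke: $T$ is not strong iff some matching avoiding $E(T)$ covers two vertices of $T$, and a minimal such matching consists of two edges forming a bull with $T$.)

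There is also a definitional misreading in your $3\Rightarrow 1$ step. In Definition~\ref{def:pendant} the triangle ``contained in a pendant diamond'' is the one whose own vertices are $a,b,c$ with $d_H(a)=2$, and the triangle ``contained in a pendant $K_4$'' is the one on the three degree-$3$ vertices; the other triangles of these subgraphs are \emph{not} covered by condition 3, and indeed they are not strong: for the diamond, the triangle $bcd$ lies in a bull with horns $ab$ and $dx$ (where $x$ is an external neighbour of the root $d$), so your parenthetical ``the triangle in question is $abc$ or $bcd$; the argument is the same'' asserts something false, as does ``an edge of any of the four triangles of the $K_4$ is covered.'' Your arguments do establish strongness for the triangles that condition 3 actually designates, so this is repairable, but as written $3\Rightarrow 1$ under your broader reading is false. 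Finally, a small slip: in the pendant-triangle and diamond cases the degree-$2$ vertex $a$ need not be matched in a maximal matching; if it is exposed, maximality together with $d_H(b)=2$ (resp.\ a short parity argument in the diamond) forces $bc\in M$, which is the one-line fix needed to conclude that every maximal matching meets $E(T)$.
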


\begin{proof}
Let $T$ be a triangle in $H$. Clearly, $T$ is not strong if and only it there exists a matching $M$ in $H$ not containing any edge of $T$ and covering at least two vertices of $T$. A minimal matching with such properties consists of two edges, each of which is incident with a vertex of $T$, and hence forms a bull together with $T$.
Therefore, $T$ is not strong if and only if $T$ is contained in a bull.

If $T$ is either a pendant triangle, or is contained in a pendant diamond or in a pendant $K_4$, then $T$ is not contained in any bull. Conversely, suppose that $T$ is not contained in any bull, and let $U$ be the set of vertices in $T$ with a neighbor outside $T$. If $|U| \le 1$ then $T$ is a pendant triangle. If $|U| = 2$ then, since $T$ is not contained in any bull, there is a unique vertex outside $T$ with a neighbor in $T$, and hence $T$ is contained in a pendant diamond. Similarly, if $|U| = 3$ then $T$ is contained in a pendant $K_4$. This establishes the equivalence of the last two conditions.
\end{proof}

The next lemma shows that with the exception of two small cases, the strong triangles in $H$ are pairwise edge-disjoint.

\begin{lemma}\label{lem:K4-diamond}
Let $H$ be a connected graph that is not isomorphic to either $K_4$ or to the diamond.
Then, any two strong triangles in $H$ are edge-disjoint.
\end{lemma}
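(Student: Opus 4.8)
The plan is to prove the contrapositive-flavored statement directly: assuming two strong triangles $T_1$ and $T_2$ in $H$ share an edge, I will show that $H$ must be isomorphic to $K_4$ or to the diamond. So suppose $T_1$ and $T_2$ both contain the edge $xy$, with $T_1 = \{x,y,z_1\}$ and $T_2 = \{x,y,z_2\}$ where $z_1 \neq z_2$. Then the four vertices $x,y,z_1,z_2$ induce either a diamond (if $z_1 z_2 \notin E(H)$) or a $K_4$ (if $z_1 z_2 \in E(H)$). The task is then to rule out any further vertices or edges, i.e. to show $V(H) = \{x,y,z_1,z_2\}$, using the strongness of $T_1$ and $T_2$ together with the characterization from Lemma~\ref{lem:strong-triangles} (and, conveniently, the structural description in its third condition).

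The key step is to exploit Lemma~\ref{lem:strong-triangles}: each $T_i$ is a pendant triangle, or contained in a pendant diamond, or contained in a pendant $K_4$. In each of these configurations, the number of vertices of $T_i$ having a neighbor outside $T_i$ is very restricted (at most one for a pendant triangle, exactly one for a pendant diamond since the two degree-three vertices $b,c$ of the diamond have all their neighbors inside, exactly none among the three vertices $a,b,c$ of a pendant $K_4$ other than the root). Applying this to $T_1$: the vertices $x$ and $y$ already have $z_2$ as a common neighbor outside $T_1$, so $z_2$ plays the role of the fourth vertex of a pendant diamond or pendant $K_4$ built on $T_1$; in particular $z_1$ has no neighbor outside $T_1$, i.e. $N_H[z_1] \subseteq \{x,y,z_2\}$, and moreover $x,y$ have no neighbor outside $\{z_1,z_2\}$ other than possibly through $z_2$ being the root. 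Running the symmetric argument with $T_2$ forces $z_2$ to have no neighbor outside $\{x,y,z_1\}$ either. Combining these, none of $x$, $y$, $z_1$, $z_2$ can have a neighbor outside $\{x,y,z_1,z_2\}$, so by connectedness $V(H) = \{x,y,z_1,z_2\}$ and $H$ is the diamond or $K_4$, as claimed.

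The one subtlety to handle carefully is the possibility that $T_1$ is a pendant triangle while $T_2$ is something larger, or vice versa, and the bookkeeping of which of $x,y,z_i$ is allowed to be a root. Since $T_1$ and $T_2$ share both $x$ and $y$, and $z_2$ is external to $T_1$ while $z_1$ is external to $T_2$, neither $T_1$ nor $T_2$ can be a pendant triangle (a pendant triangle has at most one vertex with an outside neighbor, but here $x$ and $y$ each have an outside neighbor, namely $z_2$ resp. $z_1$). Hence both $T_i$ are of the diamond or $K_4$ type, and in both of those types the root is the unique vertex with outside neighbors; since $x$ and $y$ each have the outside neighbor supplied by the other triangle, the root of $T_1$ must be among $\{x,y\}$ — but wait, in a pendant diamond the \emph{two} tip-adjacent vertices $b,c$ have degree three with no outside neighbor, so actually the root is a \emph{tip}'s neighbor... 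I will instead just read off directly from Definition~\ref{def:pendant}: in a pendant diamond with tips $a,d$, vertex $a$ has degree $2$ and $b,c$ have degree $3$ with all neighbors in $\{a,b,c,d\}$; so only $d$ connects to the rest. Thus for $T_1 = \{x,y,z_1\}$ sitting inside a pendant diamond or $K_4$, all but one of its vertices have their entire neighborhood inside the four-vertex gadget; since $x$ and $y$ both see $z_2$ and each other, the gadget's fourth vertex is $z_2$, and $z_1$ together with whichever of $x,y$ is not the root has all neighbors inside $\{x,y,z_1,z_2\}$. Symmetry from $T_2$ then pins down the last vertex. I expect this final case-matching — identifying $z_2$ as the fourth vertex of $T_1$'s gadget and $z_1$ as the fourth vertex of $T_2$'s gadget without circularity — to be the only place requiring care; everything else is immediate from Lemma~\ref{lem:strong-triangles} and connectedness.
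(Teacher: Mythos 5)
Your proposal is correct and follows essentially the same route as the paper's proof: invoke Lemma~\ref{lem:strong-triangles}, rule out the pendant-triangle case because the shared-edge endpoints $x,y$ have degree at least $3$, observe that the fourth vertex of the pendant diamond or pendant $K_4$ containing $T_1$ (resp.\ $T_2$) is forced to be $z_2$ (resp.\ $z_1$), and conclude by connectedness. The only remark: your closing worry about ``whichever of $x,y$ is not the root'' is moot, since by Definition~\ref{def:pendant} the root of a pendant diamond or pendant $K_4$ containing a triangle is the fourth vertex, never a vertex of the triangle itself, so all three of $x,y,z_1$ (resp.\ $x,y,z_2$) have their neighborhoods confined to $\{x,y,z_1,z_2\}$, exactly as your second paragraph asserts, and the symmetric combination closes the argument.
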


\begin{proof}
Let $H$ be a connected graph with a pair $T=\{a,b,c\}$ and $T'= \{a,b,d\}$ of strong triangles sharing an edge (namely, $ab$). By Lemma~\ref{lem:strong-triangles}, each of $T$ and $T'$ is either a pendant triangle, or is contained in a pendant diamond or in a pendant $K_4$. Since each of $T$ and $T'$ has at most one vertex of degree $2$, none of them can be a pendant triangle. Suppose first that one of them, say $T$, is contained in a pendant diamond. Then the vertex set of this diamond is exactly $\{a,b,c,d\}$, and the only remaining possibility for $T'$ is that it is also contained in a pendant diamond. Since $H$ is connected, we infer that $H$ is isomorphic to a diamond in this case.
If $T$ is contained in a pendant $K_4$, then, similarly, the vertex set of this $K_4$ is exactly $\{a,b,c,d\}$, and the only remaining possibility for $T'$ is that it is also contained in a pendant $K_4$, hence $H\cong K_4$ in this case.
\end{proof}

Let $H$ be a connected graph that is not isomorphic to any graph in the set $\{K_3$, $K_4$, diamond$\}$.
Recall that the \emph{pendant reduction} of $H$ is the graph obtained from $H$ by deleting
from $H$ all non-root vertices of its pendant subgraphs.
The next lemma establishes some necessary conditions for $L(H)$ to be localizable.

\begin{lemma}\label{lem:line-necessary}
Let $H$ be a connected graph not isomorphic to any graph in the set $\{K_3$, $K_4$, diamond$\}$
such that $L(H)$ is localizable and let $(S,{\cal T})$ be a line-localizability certifier of $H$. Then:
\begin{enumerate}
  \item ${\cal T}$ is the set of all strong triangles of $H$.
  \item $\Rt(H)\cap S = \emptyset$.
  \item $\Rd(H)\cup \RK(H)\subseteq S$.
  \item The pendant reduction $F$ of $H$ is a connected bipartite graph with a bipartition of its vertex set $\{S,V(F)\setminus S\}$.
\end{enumerate}
\end{lemma}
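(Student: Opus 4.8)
The plan is to fix a line-localizability certifier $(S,{\cal T})$ of $H$ and analyze how the triangles in ${\cal T}$ and the vertices in $S$ must be situated, exploiting Lemmas~\ref{lem:strong-triangles} and~\ref{lem:K4-diamond} and the edge-decomposition structure. First I would establish statement~1. Since ${\cal T}$ consists of strong triangles decomposing $H-S$, every triangle in ${\cal T}$ is certainly a strong triangle of $H$ (deleting $S$ cannot destroy strongness — a bull containing $T$ in $H$ would give a matching missing $T$, and such a matching extends to a maximal one missing $T$). For the reverse inclusion, suppose some strong triangle $T^*$ of $H$ is not in ${\cal T}$. By Lemma~\ref{lem:strong-triangles} it is a pendant triangle or sits in a pendant diamond or a pendant $K_4$; in each case, by Lemma~\ref{lem:K4-diamond} (as $H\not\cong K_4,$ diamond) it is edge-disjoint from every other strong triangle, so its edges cannot all lie on triangles of ${\cal T}$. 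Hence at least one edge $e$ of $T^*$ lies in the star part of the decomposition, i.e.\ $e$ is incident with a vertex of $S$. Then I would argue that one of the two low-degree vertices of $T^*$ must be in $S$, but a vertex of degree $2$ inside a strong triangle is not strong (there is a maximal matching missing it — match its two neighbors, which are the other two triangle vertices, to each other or elsewhere), contradicting that $S$ is a set of strong vertices. This pins down ${\cal T}$ exactly.

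Next, statements~2 and~3. For statement~2: if $r\in\Rt(H)$ is the root of a pendant triangle $T=\{a,b,r\}$ with $d_H(a)=d_H(b)=2$, then $T\in{\cal T}$ by statement~1, so the two edges $ra,rb$ are covered by $T$ in the decomposition of $H-S$; in particular these edges are not in any star centered in $S$, which forces $r\notin S$ (a star centered at $r$ would have to contain $ra$ or $rb$). For statement~3: let $r\in\Rd(H)$ be the root of a pendant diamond on $\{a,b,c,r\}$ with $d_H(a)=2$, $d_H(b)=d_H(c)=3$, tips $a,r$. The two strong triangles $\{a,b,c\}$ and $\{b,c,r\}$ — wait, $\{b,c,r\}$ need not be strong; rather the diamond contains the single strong triangle $\{a,b,c\}\in{\cal T}$. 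That triangle covers edges $ab,ac,bc$. Then the remaining edges at $b$ and $c$, namely $br$ and $cr$, are not on any triangle of ${\cal T}$ (they'd form a $K_4$ or diamond triangle with $a$, contradicting the stated degrees), so each of $br,cr$ lies in a star of the decomposition; that star is centered at an endpoint in $S$. Since $b$ and $c$ have their remaining incident edge going to $r$, and $b,c$ themselves have no third neighbor, the only way is that the star containing $br$ is centered at $r$ (likewise $cr$), forcing $r\in S$. The pendant $K_4$ case is analogous: the strong triangle $\{a,b,c\}\in{\cal T}$ covers the three edges among $a,b,c$, and the edges $ar,br,cr$ must be distributed into stars, which by the degree constraints must all be the star centered at $r$, so $r\in S$.

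Finally, statement~4. The pendant reduction $F$ is connected because deleting non-root vertices of pendant subgraphs never disconnects $H$ (each pendant subgraph hangs off the rest of $H$ only through its root). For bipartiteness with parts $S\cap V(F)$ and $V(F)\setminus S$, note that every edge of $F$ is an edge of $H$ that survives the reduction, hence (as the edges inside pendant subgraphs are removed along with their non-root vertices, together with the fact that ${\cal T}$ decomposes $H-S$ into exactly the strong triangles, all of which live inside pendant subgraphs) every surviving edge lies in the star part of the decomposition, so it is incident with a vertex of $S$; and no edge of $F$ joins two vertices of $S$ since $S$ is an independent set in $H$. Hence every edge of $F$ has exactly one endpoint in $S$, giving the claimed bipartition. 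I expect the main obstacle to be statement~1 — precisely the bookkeeping showing that a strong triangle outside ${\cal T}$ is forced to contribute a degree-$2$ vertex to $S$ — since this is where the edge-disjointness of strong triangles (Lemma~\ref{lem:K4-diamond}) and the classification in Lemma~\ref{lem:strong-triangles} have to be combined carefully with the fact that vertices of $S$ must be strong.
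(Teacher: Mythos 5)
Your plan for statements 2--4 is sound, and your treatment of statement 4 is in fact a little more direct than the paper's (which splits $V(F)$ into roots of pendant triangles and vertices in no strong triangle and does a case analysis); the only thing missing there is the one-line observation that $S\subseteq V(F)$ -- the deleted vertices are non-root vertices of pendant subgraphs, hence lie in triangles of ${\cal T}$, which are vertex-disjoint from $S$ -- which you need because the claimed bipartition is $\{S,V(F)\setminus S\}$, not $\{S\cap V(F),V(F)\setminus S\}$. The genuine gap is in the reverse inclusion of statement 1. From edge-disjointness you extract only that at least one edge of $T^*$ is incident with $S$, and then assert that ``one of the two low-degree vertices of $T^*$ must be in $S$'', aiming to contradict strongness of the vertices of $S$ via a degree-$2$ vertex. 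This step is unjustified and cannot be repaired as stated: if $T^*$ is contained in a pendant $K_4$, all three of its vertices have degree $3$, so there is no degree-$2$ vertex and your intended contradiction evaporates; if $T^*$ is contained in a pendant diamond there is only one low-degree vertex; and even for a pendant triangle, the $S$-endpoint you obtain could a priori be the root, and the root of a pendant triangle can be a strong vertex (e.g.\ when it also has a pendant neighbour), so no contradiction follows. (Also, the forward inclusion needs no argument at all -- the triangles of ${\cal T}$ are strong in $H$ by the definition of a line-localizability certifier -- and your parenthetical about ``deleting $S$ cannot destroy strongness'' is not a valid implication, just unnecessary.)

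The repair uses exactly the tools you already have on the table. Since $S$ is independent, it meets $T^*$ in at most one vertex, so some edge $e$ of $T^*$ has both endpoints outside $S$; since ${\cal T}$ decomposes $H-S$, the edge $e$ lies in some $T'\in{\cal T}$; $T'$ is a strong triangle, so by Lemma~\ref{lem:K4-diamond} two distinct strong triangles cannot share $e$, whence $T'=T^*$ and $T^*\in{\cal T}$ -- no appeal to non-strong degree-$2$ vertices is needed. The paper proceeds differently, handling the three types of Lemma~\ref{lem:strong-triangles} one by one: it first shows that suitable low-degree vertices of the pendant subgraph are not strong (hence not in $S$), then that $T^*$ is the only candidate triangle on the resulting edge of $H-S$, proving statements 2 and 3 along the way; your derivation of statements 2 and 3 from statement 1 (root edges of a diamond or $K_4$ lie on no triangle of ${\cal T}$ and the non-root endpoints lie in triangles of ${\cal T}$, hence outside $S$, forcing the root into $S$) is a legitimate alternative once statement 1 is fixed, though you should say explicitly why the star cannot be centred at $b$ or $c$, namely that these vertices belong to a triangle of ${\cal T}\subseteq H-S$.
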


\begin{proof}
Let $H$, $S$, and ${\cal T}$ be as in the statement of the lemma.
Let $\Ptriangle$ denote the set of pendant triangles of $H$, and let $\Pdiamond$ and $\PK$ denote the sets of triangles of $H$ contained in a pendant diamond or in a pendant $K_{4}$, respectively. By Lemma~\ref{lem:strong-triangles}, each strong triangle of $H$ belongs to one (and then to exactly one) of the sets $\Ptriangle$, $\Pdiamond$, and $\PK$. Moreover, by Lemma~\ref{lem:K4-diamond}, any two triangles in $\Ptriangle\cup \Pdiamond\cup \PK$ are pairwise edge-disjoint.

By Lemma~\ref{lem:strong-triangles}, in order to show statement 1, we need to show that ${\cal T} = \Ptriangle\cup \Pdiamond\cup \PK$.
Since every triangle in ${\cal T}$ is strong, we have ${\cal T}\subseteq \Ptriangle\cup \Pdiamond\cup \PK$.
We prove the converse inclusion $\Ptriangle\cup \Pdiamond\cup \PK\subseteq {\cal T}$ in three steps.
Along the way we will also prove statements 2 and 3 above.

First, let $T=\{a,b,c\}\in \Ptriangle$ be a pendant triangle with root $c$. Note that none of the vertices $a$ and $b$ is strong.
Therefore, $\{a,b\}\cap S= \emptyset$ and consequently there is a strong triangle $T'\in {\cal T}$ such that $\{a,b\}\subseteq T'$. Since $T$ is the only triangle of $H$ containing the edge $ab$, we infer that $T' = T$ and hence $T\in {\cal T}$.
This shows that $\Ptriangle\subseteq {\cal T}$. Moreover, since $T\in {\cal T}$, we have $T\cap S = \emptyset$ and consequently $c\not\in S$.
This shows statement 2, that is, that no root of a pendant triangle is in $S$.

Second, let $T=\{a,b,c\}\in \Pdiamond$ be a triangle contained in a pendant diamond with root $d$ such that $d_H(a) = 2$, $d_H(b)=d_H(c)=3$. Since vertex $a$ is not strong, we have $a\not\in S$. Since $S$ is independent, one of $b$ and $c$, say $b$, does not belong to $S$. It follows that the edge $ab$ is contained in some triangle $T'\in {\cal T}$. Again, since $T$ is the only triangle containing the edge $ab$, we infer that $T' = T$ and hence $T\in {\cal T}$. This shows that $\Pdiamond\subseteq {\cal T}$. By Lemma~\ref{lem:K4-diamond}, the strong triangles of $H$ are pairwise edge-disjoint, therefore
the triangle $\{b,c,d\}$ is not strong. It follows that $d\in S$ since otherwise the edge $bd$ would be an edge of $H-S$ not covered by any triangle in ${\cal T}$.

Now, let $T=\{a,b,c\}\in \PK$ be a triangle contained in a pendant $K_4$ with root $d$ and $d_H(a)=d_H(b)=d_H(c)=3$. Suppose for a contradiction that $T\not\in {\cal T}$. Since $S$ is independent, it contains at most one vertex of $T$. We may thus assume that $\{a,b\}\cap S = \emptyset$. Let $T'$ be the triangle in ${\cal T}$ covering the edge $ab$. Since $T\not\in {\cal T}$, we infer that $T' \neq T$ and therefore $T' = \{a,b,d\}$. Since the strong triangles of $H$ are pairwise edge-disjoint and $T$ is a strong triangle sharing the edge $ab$ with $T'$, we infer that $T'$ is not strong, contrary to the fact that $T'\in {\cal T}$.
This shows that $\PK\subseteq {\cal T}$.
Similarly as above, using the fact that the strong triangles of $H$ are pairwise edge-disjoint, we infer that
in order to cover the edge $ad$, we must have $d\in S$.
The above two paragraphs show statement 3, that is, that
the root of every pendant diamond or $K_4$ is in $S$.

Finally, we show statement 4. Let $F$ be the pendant reduction of $H$. Since $H$ is connected and $F$ differs from $H$ only by non-root vertices of its pendant subgraphs, we infer that $F$ is connected. Since ${\cal T}$ is the set of all strong triangles of $H$ and $T\cap S = \emptyset$ for all $T\in {\cal T}$, it follows that $S\subseteq V(F)$; in particular, $S$ is an independent set of $F$. It thus suffices to show that the set $V(F)\setminus S$ is independent. We have $V(F) = Z\cup \Rt(H)$, where $Z$ is the set of vertices of $H$ not contained in any strong triangle.
Suppose for a contradiction that there is an edge $uv\in E(H)$ connecting two vertices of $V(F)\setminus S$.
Then $uv$ is an edge of the graph $H-S$. If $\{u,v\}\cap Z \neq \emptyset$, then $uv$ would be an edge of $H-S$ not contained in any triangle in ${\cal T}$, contrary to the assumption on ${\cal T}$. Thus, we have $\{u,v\}\subseteq \Rt$, therefore $u$ and $v$ are adjacent roots of two pendant triangles. Since $uv$ is an edge of $H-S$, it is contained in some triangle $T\in {\cal T}$, which is clearly impossible due to the characterization of strong triangles given by Lemma~\ref{lem:strong-triangles}. This contradiction proves statement~$4$.
\end{proof}

Now we have everything ready to prove the announced characterization of localizable line graphs, which we restate here for convenience.

\begin{theoremLine}[restated]
Let $H$ be a connected graph and let $G = L(H)$. Then, $G$ is localizable if and only if one the following holds:
\begin{enumerate}
\item $H$ is not isomorphic to any graph in the set $\{K_3$, $K_4$, diamond$\}$ and its pendant reduction
$F$ is a connected bipartite graph with a bipartition of its vertex set into
  two independent sets $U$ and $W$ such that
   \begin{enumerate}[(a)]
    \item $\Rd(H)\cup \RK(H)\subseteq U$,
     \item $\Rt(H) \subseteq W$, and
 \item each vertex $u\in U\setminus \RK(H)$ is strong in the graph $F-(N_F(u)\cap \Rt(H))$.
  \end{enumerate}
 \item $H$ is isomorphic to either $K_3$ or $K_4$.
\end{enumerate}
\end{theoremLine}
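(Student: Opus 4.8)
The plan is to use Lemma~\ref{lem:line} to pass to the language of line-localizability certifiers and then to analyse how maximal matchings of $H$ interact with its pendant subgraphs. I would first dispose of the three exceptional graphs directly: $L(K_3)\cong K_3$ is localizable since its vertex set is a single strong clique; $L(K_4)\cong K_{2,2,2}$ is perfect and well-covered (its maximal independent sets are exactly the three perfect matchings of $K_4$), hence localizable by Corollary~\ref{cor:perfect}; and if $H$ is the diamond, the vertex of $L(H)$ corresponding to its middle edge dominates $L(H)$, so $L(H)$ has maximal independent sets of sizes $1$ and $2$ and is not even well-covered. In all three cases this agrees with the statement, so from now on I assume $H\notin\{K_3,K_4,\text{diamond}\}$; then all of Lemmas~\ref{lem:strong-triangles},~\ref{lem:K4-diamond},~\ref{lem:line-necessary} are available, the strong triangles of $H$ are pairwise edge-disjoint, and each of them is the ``bottom'' triangle of a pendant subgraph of $H$ (it equals the pendant triangle itself, or the triangle of a pendant diamond or pendant $K_4$ opposite the root). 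It then suffices to prove that $G=L(H)$ is localizable if and only if condition~1 holds.

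The technical core will be the following claim, used in both directions: for every $u\in V(F)\setminus(\Rt(H)\cup\RK(H))$, the vertex $u$ is strong in $H$ if and only if $u$ is strong in the graph $F-(N_F(u)\cap\Rt(H))$. Granting the claim, the forward direction is short: if $G$ is localizable, take a line-localizability certifier $(S,{\cal T})$ of $H$ (Lemma~\ref{lem:line}); Lemma~\ref{lem:line-necessary} gives that ${\cal T}$ is the set of all strong triangles, $\Rt(H)\cap S=\emptyset$, $\Rd(H)\cup\RK(H)\subseteq S$, and $F$ is connected bipartite with bipartition $\{S,V(F)\setminus S\}$. Taking $U:=S$ and $W:=V(F)\setminus S$ yields (a) at once, and (b) because roots of pendant triangles survive in $F$ and avoid $S$; finally (c) follows from the claim, since every $u\in U\setminus\RK(H)$ is a strong vertex of $H$ lying outside $\Rt(H)$.

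For the backward direction I would assume condition~1, set $S:=U$ and let ${\cal T}$ be the set of all strong triangles of $H$, and check that $(S,{\cal T})$ is a line-localizability certifier (then conclude by Lemma~\ref{lem:line}). Independence of $S$ in $H$ is clear, since $S$ is independent in $F$ and every edge of $H$ absent from $F$ is incident with a non-root vertex of a pendant subgraph, hence with a vertex outside $V(F)\supseteq U$. That ${\cal T}$ decomposes $H-S$ is a local check on each pendant subgraph using Lemmas~\ref{lem:strong-triangles} and~\ref{lem:K4-diamond}: since $F$ is bipartite between $U$ and $W$, no edge of $F$ survives in $H-S$, so $E(H-S)$ is exactly the set of edges lying inside pendant subgraphs and avoiding $S$; in a pendant triangle (root in $W$) these are all three edges, and in a pendant diamond or pendant $K_4$ (root in $U$ by (a)) they are exactly the three edges of the bottom triangle --- so in every case they are the edges of a strong triangle, and strong triangles are edge-disjoint. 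Lastly, every $u\in S=U$ is strong in $H$: if $u\in\RK(H)$ this is immediate, because in any maximal matching $M$ of $H$ the three degree-$3$ vertices of the pendant $K_4$ rooted at $u$ are covered only by edges of their common triangle, so one of them is left uncovered, and being adjacent to $u$ it forces $u$ to be covered; and if $u\in U\setminus\RK(H)$ then $u\notin\Rt(H)$, so by (c) and the claim $u$ is strong in $H$.

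The remaining task is the claim, which I expect to be the main obstacle. The approach is to translate maximal matchings of $H$ into maximal matchings of $F':=F-(N_F(u)\cap\Rt(H))$ and back, resolving each pendant subgraph by a canonical local choice: a pendant triangle whose root survives in $F'$ is matched by its bottom edge if its root is covered from outside, and otherwise its root is left unmatched (harmless, since its two degree-$2$ vertices then have only covered neighbours); a pendant triangle whose root was deleted in forming $F'$ is matched so as to cover that root internally, which is always possible; and pendant diamonds and pendant $K_4$s are resolved so that their roots are covered, internally or from outside, exactly as required by the matching on the surviving graph. One direction extends a given maximal matching of $F'$ missing $u$ to such a matching of $H$; the other restricts a maximal matching of $H$ missing $u$ to $F'$ and re-saturates greedily. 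The delicate point --- and the heart of the proof --- is verifying that maximality is preserved in both translations while $u$ stays uncovered; the only case in which the re-saturation might be forced to cover $u$, namely a neighbour of $u$ which is the root of a pendant triangle and is covered only internally in the original matching, is precisely the case removed from $F$ in passing to $F'$, which explains why the claim (and condition~(c)) refers to $F-(N_F(u)\cap\Rt(H))$ rather than to $F$.
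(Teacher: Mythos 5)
Your overall architecture matches the paper's (pass to line-localizability certifiers via Lemma~\ref{lem:line}, use Lemma~\ref{lem:line-necessary} for the forward direction, and exhibit $(U,{\cal T})$ with ${\cal T}$ the set of all strong triangles for the converse), but the ``technical core'' you isolate is false as stated, and this is a genuine gap. Your claim asserts that for \emph{every} $u\in V(F)\setminus(\Rt(H)\cup\RK(H))$, $u$ is strong in $H$ if and only if $u$ is strong in $F-(N_F(u)\cap\Rt(H))$. Counterexample: let $H$ consist of two pendant diamonds with roots $v$ and $x$, together with a vertex $u$ adjacent to both $v$ and $x$ (so $F$ is the path $v\,u\,x$, $\Rt(H)=\RK(H)=\emptyset$, $\Rd(H)=\{v,x\}$). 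Then $F'=F$ and $u$, being the middle vertex of a $P_3$, is strong in $F'$; but in $H$ the matching consisting of one root--edge and the opposite tip--edge in each diamond (e.g.\ $vb,\,ac$ and $xb',\,a'c'$) is a maximal matching missing $u$, so $u$ is not strong in $H$. The direction that fails is exactly the one your converse needs (strong in $F'$ $\Rightarrow$ strong in $H$), and the flaw is visible in your sketch: you assert that the only neighbour of $u$ that can be covered ``only internally'' is a root of a pendant triangle, which is deleted in forming $F'$. That is not so --- roots of pendant diamonds and pendant $K_4$s can also be matched internally (e.g.\ the diamond root to one of its degree-$3$ vertices), and these roots are \emph{not} deleted, so your restrict-and-resaturate translation can be forced to cover $u$.

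The repair is to abandon the free-standing equivalence and prove the two implications separately under the hypotheses in which they are actually used, which is what the paper does. The implication ``$u$ strong in $H$ $\Rightarrow$ $u$ strong in $F'$'' (needed in the forward direction, where $u\in S\setminus\RK(H)$) does hold in your stated generality, by essentially your extension construction. But the converse implication must be proved only for $u\in U\setminus\RK(H)$ under the full condition~1: there, (a) together with the independence of $U$ guarantees that $u$ has no $F$-neighbour in $\Rd(H)\cup\RK(H)$, so every $F'$-neighbour of $u$ is a non-root vertex of $F$ and all its $H$-edges lie in $F$; and bipartiteness with $\Rt(H)\subseteq W$ guarantees that the matching edges covering these neighbours avoid the deleted set $N_F(u)\cap\Rt(H)$, so the restriction $M\cap E(F')$ of a maximal matching of $H$ missing $u$ already covers all $F'$-neighbours of $u$ and extends to a maximal matching of $F'$ missing $u$. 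Without these structural hypotheses the translation simply does not go through, as the example above shows. The rest of your plan (the three exceptional graphs, independence of $U$ in $H$, the decomposition of $H-U$ by strong triangles, and the pendant-$K_4$ root case, modulo a slightly garbled phrasing of the latter) is sound and agrees with the paper.
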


\begin{proof}
Suppose first that $G=L(H)$ is localizable, and let $(S,{\cal T})$ be a line-localizability certifier of $H$.
Suppose that $H$ is not isomorphic to any of $K_3$ or $K_4$.
Since the line graph of the diamond is not localizable, $H$ is also not isomorphic to the diamond.
Let $F$ be the pendant reduction of $H$.
By construction, we have $\Rt(H)\cup \Rd(H)\cup \RK(H)\subseteq V(F)$.
By Lemma~\ref{lem:line-necessary}, $F$ is a connected bipartite graph with a bipartition of its vertex set $\{S,V(F)\setminus S\}$.
The same lemma implies that $\Rd(H)\cup \RK(H)\subseteq S$ and $\Rt(H) \subseteq V(F)\setminus S$.
Thus, letting $W' = N_F(u)\cap \Rt(H)$ and $F' = F-W'$, we only need to show that
each vertex $u\in S\setminus \RK(H)$ is strong in graph $F'$, and the desired conclusion will follow by taking
$U = S$ and $W = V(F)\setminus S$.

Suppose for a contradiction that there exists a vertex $u\in S\setminus \RK(H)$ that is not strong
in graph $F'$. Then there exists a maximal matching $M'$ of $F'$ such that $u$ is not incident with any edge of $M'$.
Since $u\in S$, vertex $u$ is strong in $H$, that is, every maximal matching of $H$ covers $u$.
However, we will now show that $M'$ is contained in a maximal matching of $H$ not covering $u$.
Since $u\in S$ and $\Rt(H) \subseteq V(F)\setminus S$, vertex $u$ is not the root of any pendant triangle of $H$. Moreover,
since $u\not\in \RK(H)$, it is also not the root of any pendant $K_4$.
Let ${\cal D}$ be the (possibly empty) set of pendant diamonds in $H$ having $u$ as the root.
For each pendant diamond $D\in {\cal D}$, let $e_D$ be the edge of $D$ completing a triangle with $u$.
For each vertex $w\in W'$, let ${\cal T}_w = \{T_1,\ldots, T_k\}$ be the set of
pendant triangles of $H$ with root $w$, let $T_i = \{w,a_i,b_i\}$, and let
$M_w=\{wa_1\}\cup\{a_ib_i: 2\le i\le k\}$. Note that $M_w$ is a matching in $H$.
Let $$M = M'\cup \{e_D: D\in {\cal D}\}\cup \bigcup_{w\in W'}M_w\,.$$
Then, $M$ is a matching in $H$ covering all neighbors of $u$. Therefore, $M'$ is contained in a maximal matching of $H$ not covering $u$,
contradicting the fact that $u$ is strong in $H$.
This completes the proof of the forward direction.

For the converse direction, suppose that one of conditions 1 and 2 in the statement of the theorem holds.
If $H$ is isomorphic to one of $K_3$ and $K_4$, then its line graph is localizable.
Suppose now that condition 1 holds, that is, $H$ is not isomorphic to any graph in the set $\{K_3$, $K_4$, diamond$\}$ and its pendant reduction
$F$ is a connected bipartite graph with a bipartition of its vertex set into
  two independent sets $U$ and $W$ such that conditions (a)--(c) hold.
Let ${\cal T}$ be the set of all strong triangles of $H$.
We will show that the pair $(U,{\cal T})$ is a line-localizability certifier of $H$, which will imply that $G$ is localizable by
Lemma~\ref{lem:line}.
In other words, we need to show that $U$ is an independent set of strong vertices in $H$ and
that ${\cal T}$ is a set of strong triangles decomposing $H-U$.
By Lemma~\ref{lem:strong-triangles}, ${\cal T}$ equals to the set containing all pendant triangles, all triangles
contained in a pendant diamond, and all triangles contained in a pendant $K_4$.
Note that by Lemma~\ref{lem:K4-diamond}, any two strong triangles in $H$ are edge-disjoint.
Therefore, the fact that $F$ is bipartite and properties (a) and (b)
imply that ${\cal T}$ decomposes $H-U$.
Clearly, $U$ is an independent set in $H$.
It therefore remains to show that every vertex $u\in U$ is strong in $H$.
Suppose for a contradiction that some $u\in U$ is not strong in $H$.
Then, there exists a maximal matching $M$ of $H$ not covering $u$.
We consider two cases.
Suppose first that $u\in U\setminus \RK(H)$.
By property (c), vertex $u$ is strong in the graph $F' = F-(N_F(u)\cap \Rt(H))$.
It follows that matching $M' = M\cap E(F')$ covers all vertices in
$N_F(u)\setminus \Rt(H)$, which are exactly the neighbors of $u$ in $F'$.
Extending $M'$ to a maximal matching of $F'$ not covering $u$ shows that
$u$ is not strong in $F'$, a contradiction.
Suppose now that $u\in \RK(H)$.
Let $D$ be a pendant $K_4$ of $H$ with vertex set $\{a,b,c,u\}$ and with root $u$.
Since $u$ is not covered by $M$, matching $M$ covers at most two of the vertices
in $\{a,b,c\}$. Adding to $M$ the edge connecting $u$ with an uncovered vertex in $\{a,b,c\}$
results in a matching of $H$ properly containing $M$, which contradicts the maximality of $M$.
This completes the proof.
\end{proof}

\section{Counterexamples to a conjecture by Zaare-Nahandi}\label{sec:counterexample}

In~\cite{MR3356635}, Zaare-Nahandi posed the following conjecture.
Recall that a graph $G$ is said to be semi-perfect if $\theta(G)= \alpha(G)$.

\begin{conjecture1}[restated]
Let $G$ be an $s$-partite well-covered graph in which all maximal cliques are of size $s$. Then $G$ is semi-perfect.
\end{conjecture1}

The conjecture can be equivalently stated in terms of localizable graphs as follows.

\begin{conjecture2}[restated]
Let $G$ be a localizable co-well-covered graph.
Then $\overline{G}$ is localizable.
\end{conjecture2}

\begin{proposition}
Conjectures~\ref{conj:Zaare-Nahandi} and~\ref{conj2} are equivalent.
\end{proposition}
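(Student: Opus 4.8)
The plan is to establish the equivalence by unwinding the definitions and applying Proposition~\ref{prop:alpha-chi-bar} together with the duality between a graph and its complement. First I would fix the dictionary between the two formulations. A graph $G$ is $s$-partite means $\chi(G)\le s$, i.e. $\overline{G}$ has a clique cover into $s$ cliques; combined with ``all maximal cliques of $G$ have size $s$'', which translates to ``all maximal independent sets of $\overline{G}$ have size $s$'', one reads off that $\overline{G}$ is well-covered with $\alpha(\overline{G}) = s = \theta(\overline{G})$, i.e.\ $\overline{G}$ is well-covered and semi-perfect. By Proposition~\ref{prop:alpha-chi-bar}(d) this says exactly that $\overline{G}$ is localizable. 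In other words, writing $G' = \overline{G}$, the hypothesis of Conjecture~\ref{conj:Zaare-Nahandi} on $G$ is equivalent to the statement that $G'$ is localizable and $\overline{G'} = G$ is well-covered, i.e.\ $G'$ is a localizable co-well-covered graph — which is precisely the hypothesis of Conjecture~\ref{conj2} on $G'$.

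Second, I would match the conclusions. The conclusion of Conjecture~\ref{conj:Zaare-Nahandi} is ``$G$ is semi-perfect'', i.e.\ $\theta(G) = \alpha(G)$, i.e.\ $\chi(\overline{G}) = \omega(\overline{G})$. Since $G' = \overline{G}$ is already known to be well-covered under the hypothesis (it is the co-well-covered graph $G$ whose complement $G'$ we are discussing — more precisely, $\overline{G'} = G$ is well-covered, so $G'$ is co-well-covered, while we must be careful about which graph is assumed well-covered). Let me restate cleanly: under the common hypothesis, $G' = \overline G$ is localizable, hence by Proposition~\ref{prop:alpha-chi-bar} well-covered and semi-perfect; and $G = \overline{G'}$ is well-covered (this is the ``co-well-covered'' part). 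Then ``$G$ semi-perfect'' together with ``$G$ well-covered'' is, by Proposition~\ref{prop:alpha-chi-bar}(d) applied to $G$, equivalent to ``$G$ localizable'', which is the conclusion of Conjecture~\ref{conj2} written for $G'$ (namely $\overline{G'} = G$ is localizable). Conversely, ``$G$ localizable'' implies ``$G$ semi-perfect'' directly. So the two conclusions coincide given the hypothesis.

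Combining these two observations gives a bijection between instances: $G$ satisfies the hypothesis of Conjecture~\ref{conj:Zaare-Nahandi} if and only if $\overline G$ satisfies the hypothesis of Conjecture~\ref{conj2}, and in that case $G$ satisfies the conclusion of the former if and only if $\overline G$ satisfies the conclusion of the latter. Since complementation is an involution on graphs, this shows the two universally quantified statements are logically equivalent. I would write the proof as a short sequence of ``if and only if'' reformulations rather than two separate implications, to keep it transparent.

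The main obstacle is purely bookkeeping: keeping straight which of $G$ and $\overline{G}$ is assumed well-covered versus localizable at each stage, and making sure the parameter $s$ is correctly identified as $\alpha(\overline G) = \theta(\overline G)$. The only genuine content is the already-proven equivalence (d)$\Leftrightarrow$(a) of Proposition~\ref{prop:alpha-chi-bar}; everything else is the observation that ``$\chi(\overline G)\le s$ and all maximal cliques of $G$ have size $s$'' unpacks, via $\alpha(G) = \omega(\overline G)$, $\theta(G) = \chi(\overline G)$, exactly to ``$\overline G$ is well-covered and semi-perfect''. I do not anticipate any subtlety beyond this, so the write-up should be under half a page.
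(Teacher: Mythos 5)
Your proposal is correct and follows essentially the same route as the paper: both proofs consist of translating the hypothesis of Conjecture~\ref{conj:Zaare-Nahandi} into ``$\overline{G}$ is localizable and co-well-covered'' via Proposition~\ref{prop:alpha-chi-bar} and the identities $\alpha(G)=\omega(\overline G)$, $\theta(G)=\chi(\overline G)$ (with $s=\alpha(\overline G)=\theta(\overline G)$), and then matching the conclusions using the fact that, given well-coveredness, semi-perfection is equivalent to localizability. The only cosmetic difference is that you phrase it as a single chain of equivalences under the complementation involution, whereas the paper spells out the two implications separately.
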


\begin{proof}
First we prove that
Conjecture~\ref{conj:Zaare-Nahandi} implies  Conjecture~\ref{conj2}.
Assume the validity of Conjecture~\ref{conj:Zaare-Nahandi}.
Let $G$ be a localizable co-well-covered graph.
By Proposition~\ref{prop:alpha-chi-bar}, this is equivalent to:
$G$ and $\overline G$ are well-covered and
$\theta(G)= \alpha(G)$. Let $s = \alpha(G)$.
Then $\chi(\overline G) = s$, hence $\overline{G}$ is an $s$-partite
well-covered graph in which all maximal cliques are of size $s$.
By Conjecture~\ref{conj:Zaare-Nahandi}, $\overline G$ is semi-perfect, that is,
$\theta(\overline G)= \alpha(\overline G)$.
Therefore, since $\overline{G}$ is well-covered, $\overline{G}$ is localizable by Proposition~\ref{prop:alpha-chi-bar}.
Thus, Conjecture~\ref{conj2} holds.

Conversely,  Conjecture~\ref{conj2} implies Conjecture~\ref{conj:Zaare-Nahandi}.
Assume the validity of Conjecture~\ref{conj2}.
Let $G$ be an $s$-partite well-covered graph in which all maximal cliques are of size $s$.
Then $\chi(G) = \omega(G) = s$ and all the maximal independent sets of $\overline G$ are of size $s$.
These conditions are equivalent to: $\theta(\overline{G})= \alpha(\overline G)$ and
 both $G$ and $\overline G$ are well-covered.
By Proposition~\ref{prop:alpha-chi-bar}, this is equivalent to:
$\overline G$ is a localizable co-well-covered graph.
Conjecture~\ref{conj2} implies that $G$ is localizable, and by Proposition~\ref{prop:alpha-chi-bar},
$G$ is semi-perfect. Thus, Conjecture~\ref{conj:Zaare-Nahandi} holds.
\end{proof}

In what follows, we disprove Conjecture~\ref{conj:Zaare-Nahandi} in two different ways.
First, we  give an infinite family of counterexamples showing that even the following conjecture,
which would be a consequence of Conjectures~\ref{conj:Zaare-Nahandi} and~\ref{conj2}, fails.

\begin{conjecture}\label{conj3}
Let $G$ be a localizable co-well-covered graph. Then $\overline{G}$ has a strong clique.
\end{conjecture}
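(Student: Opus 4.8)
The statement asks, equivalently via Proposition~\ref{prop:alpha-chi-bar}, whether a graph $G$ that is well-covered, semi-perfect, and has all maximal cliques of one common size $s$ must contain a \emph{strong independent set}, i.e.\ an independent set meeting every maximal clique. A first attempt at a proof: fix a partition of $V(G)$ into $\alpha(G)=:t$ strong cliques $C_1,\dots,C_t$ (these exist by Proposition~\ref{prop:alpha-chi-bar}); since a strong clique is necessarily a maximal clique and $G$ is co-well-covered, each $C_i$ has size $s=\omega(G)$. Now $\overline G$ is $t$-partite with parts $C_1,\dots,C_t$, and $\omega(\overline G)=\alpha(G)=t$, so $\overline G$ has a clique of size $t$, which must use exactly one vertex from each part $C_i$. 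Reading this clique back in $G$ yields an independent set $I$ with $|I\cap C_i|=1$ for all $i$, hence an independent set meeting each of the maximal cliques $C_1,\dots,C_t$. The only gap — and it is a real one — is that $G$ may possess maximal cliques \emph{not} among the $C_i$; those cliques also have size $s$, but nothing forces $I$ to meet them, and one cannot in general re-choose $I$ to do so.

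This gap is exactly what a counterexample should exploit, so I would now switch to constructing one, expanding on the reduction in the proof of Theorem~\ref{prop:k-localizable}. Recall that from a graph $G_0$ with $\omega(G_0)\le 3$ that construction completes each maximal clique of $G_0$ to a triangle with private new vertices, producing a graph all of whose maximal cliques have size $3$ and whose complement is $3$-localizable if and only if $G_0$ is $3$-colourable. Starting instead from a \emph{triangle-free} graph $G_0$ with $\chi(G_0)>3$ — for instance from any of the infinitely many triangle-free graphs with chromatic number at least $4$ produced by iterating Mycielski's construction on $C_5$ — the aim is to attach to each edge $uv$ of $G_0$ a small gadget so that the resulting graph $G$ is localizable and co-well-covered, its maximal cliques outside the strong-clique partition correspond to the edges of $G_0$, and every independent set of $G$ meeting all of these extra cliques records, at each vertex of $G_0$, a colour from a $3$-element palette in a way that is proper along every edge. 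Then ``$G$ has a strong independent set'' becomes ``$G_0$ is $3$-colourable'', which is false; by the discussion above this also refutes Conjectures~\ref{conj:Zaare-Nahandi} and~\ref{conj2}, and the infinite supply of triangle-free graphs of chromatic number exceeding $3$ yields an infinite family of counterexamples.

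The main obstacle is the simultaneous fulfilment of three competing demands on the gadget. First, $G$ must stay localizable: the gadget attached to $uv$, together with $u$ and $v$, should decompose into strong cliques partitioning its vertices, to be verified via Lemma~\ref{lem:strong} and Proposition~\ref{prop:alpha-chi-bar}; the naive padding of Theorem~\ref{prop:k-localizable} already fails here, since it creates dominating vertices and destroys well-coveredness. Second, $G$ must be co-well-covered, so the gadgets must be symmetric enough that \emph{all} maximal cliques of $G$ have the same size and, simultaneously, all maximal independent sets of $G$ do too. Third — and this is the crux — the gadgets must leave no ``cheap'' strong independent set: the freedom, present in the plain padding, to throw a private new vertex into a transversal must be engineered away, so that meeting all the edge-cliques genuinely forces a consistent assignment of one of three colours to the vertices of $G_0$ that is proper on every edge. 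Checking that no stray strong independent set survives against the rigidity imposed by the first two requirements is where essentially all the work lies; once a gadget meeting all three demands is pinned down, confirming localizability, co-well-coveredness, and the colouring correspondence should reduce to a routine local case analysis.
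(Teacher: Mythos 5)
Your reading of the task is right: the statement is false and needs a counterexample, and your diagnosis of why the naive ``transversal of the strong-clique partition'' argument breaks down (maximal cliques outside the partition need not be met) is sound. But as a disproof the proposal is incomplete in an essential way: the gadget that is supposed to carry the whole argument is never specified, and you yourself defer exactly the decisive verification (``checking that no stray strong independent set survives \dots is where essentially all the work lies''). What you have is a research plan, not a counterexample. Moreover, the plain edge-triangle-plus-pendant-triangle construction you start from (the one behind Theorem~\ref{prop:k-localizable}, and used in the paper's Theorem~\ref{thm:loc-co-wc-hard} to disprove Conjectures~\ref{conj:Zaare-Nahandi} and~\ref{conj2} and get NP-hardness) genuinely does \emph{not} refute Conjecture~\ref{conj3}: taking all the new edge-vertices together with one pendant vertex per original vertex gives a strong independent set, so removing that ``cheap'' transversal is precisely the open part of your plan, and it is not clear your three competing demands can be met by a local gadget at all.

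The paper's actual counterexample (Theorem~\ref{thm:counterexamples}) sidesteps all of this by dropping from non-$3$-colourability to non-$2$-colourability: let $G$ be the corona of an odd cycle $C$ of length at least $5$ (attach one pendant vertex to each cycle vertex). The pendant closed neighbourhoods $N[v]$, $v\notin V(C)$, are simplicial, hence strong, cliques partitioning $V(G)$, so $G$ is localizable; every maximal clique is an edge, so $G$ is co-well-covered; and a strong independent set of $G$ would have to meet every cycle edge, i.e.\ be an independent vertex cover of the odd cycle $C$, which does not exist since $C$ is not bipartite. Here the ``cheap'' option is harmless because pendant vertices lie in no cycle edge, so no gadget engineering is needed. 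In short: right target, correct critique of the obvious proof attempt, but the construction that would finish the job is missing, and a much simpler one exists.
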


\begin{theorem}\label{thm:counterexamples}
Conjecture~\ref{conj3} is false. Consequently, Conjectures~\ref{conj:Zaare-Nahandi} and~\ref{conj2} are false.
\end{theorem}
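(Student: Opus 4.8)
The plan is to exhibit, for infinitely many parameters, a graph $G$ that is localizable and co-well-covered but whose complement $\overline{G}$ has no strong clique; since a strong clique in $\overline{G}$ is the same thing as a strong independent set in $G$, it suffices to build an infinite family of localizable co-well-covered graphs $G$ in which no independent set intersects every maximal clique. A natural source of such $G$ comes from the construction already used in the proof of Theorem~\ref{prop:k-localizable}: start from a triangle-free graph $H$ with $\chi(H)>3$ (the Gr\"otzsch graph, Kneser graphs $KG_{3k-1,k}$, or Mycielskians of odd cycles all work, and there are infinitely many such $H$), pad every maximal clique (i.e.\ every edge) of $H$ with new vertices so that all maximal cliques have a fixed size $s$, call the result $H'$, and set $G=\overline{H'}$. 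By the argument in that proof, $G$ is $s$-localizable iff $\chi(H')=s$, and since $H$ is triangle-free with $\chi(H')=\chi(H)$, choosing $s=\chi(H)$ makes $G$ localizable; moreover $G=\overline{H'}$ is co-well-covered because $H'$ has all maximal cliques of size $s$, which forces $\overline{G}=H'$ to be well-covered, and $G$ itself is well-covered because it is $s$-localizable (Proposition~\ref{prop:alpha-chi-bar}). This realizes $G$ as a localizable co-well-covered graph.

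The crux is then to show that $\overline{G}=H'$ has no strong independent set, equivalently that for every independent set $J$ of $H'$ there is a maximal clique (an $s$-clique) of $H'$ disjoint from $J$. Here I would use the fact that the maximal cliques of $H'$ are in bijection with the edges of $H$, each padded by $s-2$ private vertices; the private padding vertices form a large independent set of $H'$, and one checks that an independent set of $H'$ that meets every padded edge-clique would, after restricting to $V(H)$, yield a transversal of $E(H)$ contained in an independent set of $H$ — but such a transversal is precisely an independent dominating (in fact vertex-covering) set of $H$ that is itself independent, and one shows this is impossible when $\chi(H)>3$: if $H$ had an independent set hitting every edge, its complement in $V(H)$ would be independent too (since the set is a vertex cover whose complement is independent), giving $\chi(H)\le 2$. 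The padding vertices require a short separate check: a padding vertex of the clique on edge $e$ is nonadjacent to every vertex except the two endpoints of $e$ and the other padding vertices of that same clique, so including padding vertices in $J$ only helps cover cliques sharing that edge, and one argues that no choice of padding vertices can patch up the obstruction inherited from $\chi(H)>3$. Assembling these observations shows $H'$ has no strong independent set, hence $G=\overline{H'}$ has no strong clique, refuting Conjecture~\ref{conj3}.

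The main obstacle I anticipate is the bookkeeping in the padding step: one must verify carefully that the maximal cliques of $H'$ are exactly the $s$-cliques obtained from edges of $H$ (no spurious maximal cliques arise among padding vertices or between padding vertices of different edges), that $\chi(H')=\chi(H)$ exactly (padding vertices of a clique can always be colored using colors not on the two genuine endpoints, which needs $s=\chi(H)\ge 3$ and $s-2$ spare colors — here one may need $s$ slightly larger than $\chi(H)$, or to argue the complete graph on the clique already has chromatic number $s$, so $\chi(H')\ge s$; reconciling $\chi(H')=s$ with $s=\chi(H)$ is the delicate point and may force the precise choice $s=\max(\chi(H),3)$ together with checking $\omega(H')=s$), and finally that the reduction of ``strong independent set in $H'$'' to ``independent vertex cover of $H$'' is faithful once padding vertices are allowed in $J$. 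Once these verifications are in place, the conclusion that Conjectures~\ref{conj:Zaare-Nahandi} and~\ref{conj2} are false is immediate, since a localizable co-well-covered graph whose complement is localizable would in particular have a strong clique in its complement, contradicting what we have shown; this yields Theorem~\ref{thm:counterexamples}.
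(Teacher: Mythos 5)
Your construction does not satisfy the hypotheses of Conjecture~\ref{conj3}, and the claim you identify as the crux is both the wrong property and a false one. First, co-well-coveredness: with $G=\overline{H'}$ you must show that $\overline{G}=H'$ is well-covered, but your justification (``all maximal cliques of $H'$ have size $s$'') only re-proves that $\overline{H'}=G$ is well-covered, which you already get from localizability. In fact $H'$ is generally \emph{not} well-covered: a maximal independent set of $H'$ with trace $I$ on $V(H)$ has size $|I|+\#\{uv\in E(H): u,v\notin I\}$, and this varies. For the Gr\"otzsch graph with $s=4$, taking $I$ a maximum independent set (size $5$) gives size at most $5+9=14$, while taking $I=\emptyset$ and one padding vertex per edge gives a maximal independent set of size $20$. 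So your $G$ is not co-well-covered and cannot serve as a counterexample. Second, there is a complementation mix-up in the key step. Refuting Conjecture~\ref{conj3} with $G=\overline{H'}$ requires that $\overline{G}=H'$ have no strong clique, equivalently that $G$ have no strong independent set, equivalently that no \emph{clique} of $H'$ meet all maximal independent sets of $H'$. What you set out to prove is instead that $H'$ has no strong \emph{independent set}, which would only say that $G$ has no strong clique --- impossible anyway, since $G$ is localizable and hence covered by strong cliques. Worse, that statement is false for your $H'$: choosing one padding vertex from each padded edge-clique yields an independent set of $H'$ (padding vertices of distinct edges are non-adjacent) that meets every maximal clique of $H'$ (all maximal cliques are the padded edge-cliques), so $H'$ does have a strong independent set. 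This is exactly where your hoped-for reduction to an independent vertex cover of $H$ breaks, and no bookkeeping about the padding vertices can repair it, since the padding alone already hits every maximal clique.

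For comparison, the paper's counterexamples avoid all of this with a much simpler family: let $G$ be the corona of an odd cycle $C$ of length at least $5$. The closed neighborhoods of the pendant vertices are simplicial, hence strong, cliques partitioning $V(G)$, so $G$ is localizable; $G$ is triangle-free, so all maximal cliques are edges and $G$ is co-well-covered; and a strong independent set of $G$ would have to meet every edge of $C$ (each such edge is a maximal clique), which is impossible because $C$ is an odd cycle. If you want to salvage your line of attack, you would need a construction in which the ``padding'' does not itself form a transversal of the maximal cliques --- which is precisely what the corona construction achieves by making the maximal cliques of $G$ the edges of a non-bipartite graph.
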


\begin{proof}
Let $C$ be an odd cycle of length at least $5$, and let $G$ be the corona of $C$, that is, the graph obtained from $C$
by adding to it $|V(C)|$ new vertices, each adjacent to a different vertex of cycle (and there are no other edges in $G$).
Each of the $|V(C)|$ sets $N[v]$, where $v\in V(G)\setminus V(C)$, is a simplicial (hence strong) clique in $G$.
These cliques are pairwise disjoint and their union is $V(G)$. Therefore, $G$ is localizable.
Since every maximal clique of $G$ has size $2$, we infer that $G$ is co-well-covered.
We claim that $\overline{G}$ does not have any strong clique, which is equivalent to stating that $G$ does not have any strong independent set.
This is true since any strong independent set in $G$ would have to contain a vertex from each of the edges of $C$
(as these edges are all maximal cliques in $G$); however, as $C$ is not bipartite, no independent set of $C$ hits all edges of $C$.
\end{proof}

Second, we give a `computational complexity' disproof of Conjecture~\ref{conj:Zaare-Nahandi}, by showing
that not only there are localizable co-well-covered graphs whose complement is not localizable, but that
it is NP-hard to determine if the complement of a given localizable co-well-covered graph is localizable.

\begin{theorem}\label{thm:loc-co-wc-hard}
Given a localizable co-well-covered graph $G$, it is NP-hard to determine if $\overline{G}$ is localizable.
\end{theorem}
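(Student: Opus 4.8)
The plan is to give a polynomial reduction from the problem of deciding whether a given triangle-free graph is $3$-colorable, which is NP-complete. The starting point is a reformulation of the question at hand: if $G$ is localizable and co-well-covered, then $\overline G$ is already well-covered (since $G$ is co-well-covered), so by Proposition~\ref{prop:alpha-chi-bar} the graph $\overline G$ is localizable if and only if it is semi-perfect, that is, if and only if $\theta(\overline G)=\alpha(\overline G)$, equivalently $\chi(G)=\omega(G)$. Hence it will be enough to construct, from a triangle-free graph $H$, a graph $G=G(H)$ that is always localizable and co-well-covered, satisfies $\omega(G)=3$, and has $\chi(G)=\max(3,\chi(H))$; then $\overline G$ is localizable precisely when $\chi(H)\le 3$, and the promise on the input holds on every instance produced by the reduction.

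To build $G$ I would proceed in two stages. First, ``triangulate'' $H$ into a graph $H^*$ all of whose maximal cliques are triangles: for every edge $uv\in E(H)$ add a new vertex $w_{uv}$ adjacent exactly to $u$ and $v$, and for every isolated vertex $x$ of $H$ add two new vertices $y_x,z_x$ forming a triangle with $x$. Since $H$ is triangle-free and every added vertex has degree $2$, the graph $H^*$ contains no $K_4$ and every edge of $H^*$ lies in a triangle; moreover $\chi(H^*)=\max(3,\chi(H))$, because $H$ is an induced subgraph of $H^*$, $H^*$ contains a triangle, and any $\max(3,\chi(H))$-coloring of $H$ extends greedily to the degree-$2$ vertices. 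Second, let $G$ be obtained from $H^*$ by attaching a \emph{pendant triangle} at each vertex: for each $v\in V(H^*)$ add two new vertices $a_v,b_v$ and the edges $va_v$, $vb_v$, $a_vb_v$ (an operation in the spirit of the corona used in the proof of Theorem~\ref{thm:counterexamples}).

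Three verifications would then complete the argument. (i) $G$ is localizable, because the triangles $N_G[a_v]=\{v,a_v,b_v\}$ for $v\in V(H^*)$ are simplicial --- hence strong --- cliques that partition $V(G)$. (ii) $G$ is co-well-covered with $\omega(G)=3$: one checks that every maximal clique of $G$ is either a pendant triangle $\{v,a_v,b_v\}$ or a maximal clique of $H^*$ of size at least two, and the latter are exactly the triangles of $H^*$ (using that every edge of $H^*$ lies in a triangle and $\omega(H^*)=3$); so all maximal cliques of $G$ have size $3$, i.e.\ $\overline G$ is well-covered. (iii) $\chi(G)=\max(3,\chi(H))$, since $H^*$ is an induced subgraph of $G$ and any coloring of $H^*$ (which uses at least three colors) extends to the pendant triangles. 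Combining (i)--(iii): $\overline G$ is localizable $\iff\chi(G)=\omega(G)=3\iff\chi(H^*)=3\iff\chi(H)\le 3$, which gives the reduction. I expect the most delicate step to be (ii) --- pinning down the maximal cliques of $G$ exactly, so that attaching pendant triangles introduces neither a $K_4$ nor a maximal clique of size other than $3$ --- since this is what makes $G$ simultaneously co-well-covered and (via the simplicial-triangle partition) localizable while keeping its chromatic number equal to $\max(3,\chi(H))$.
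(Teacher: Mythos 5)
Your proposal is correct and is essentially the paper's own proof: the same reduction from $3$-colorability of triangle-free graphs, with the same two-stage gadget (turn each edge into a triangle via a new vertex, then attach a pendant triangle at every vertex), and the same correctness mechanism based on all maximal cliques having size $3$. The only differences are cosmetic --- your explicit (and redundant) treatment of isolated vertices, and phrasing the final equivalence through Proposition~\ref{prop:alpha-chi-bar} as $\chi(G)=\omega(G)$ rather than exhibiting the three strong cliques directly as the paper does.
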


\begin{proof}
The proof is a modification of the proof of Theorem~\ref{prop:k-localizable} specified to $k = 3$.
We make a reduction from the $3$-{\sc Colorability} problem in triangle-free graphs, which is NP-hard~\cite{MR2291884}.
Given a triangle-free graph $G$, we will construct from $G$ a localizable co-well-covered graph $G'$ such that
$G$ is $3$-colorable if and only if $\overline{G}$ is localizable.

The graph $G'$ is obtained from $G$ in two steps.
First, every edge of $G$ is extended into a triangle with a unique new vertex; let $G_1$ be the resulting graph.
Second, to every vertex $v$ of $G_1$ two new vertices are added, say $v'$ and $v''$, which are adjacent to each other and to $v$.
Then, $G'$ is the so obtained graph.
An example of the reduction is shown in Figure~\ref{fig:hardness}.

\begin{figure}[!ht]
  \begin{center}
\includegraphics[width=0.8\linewidth]{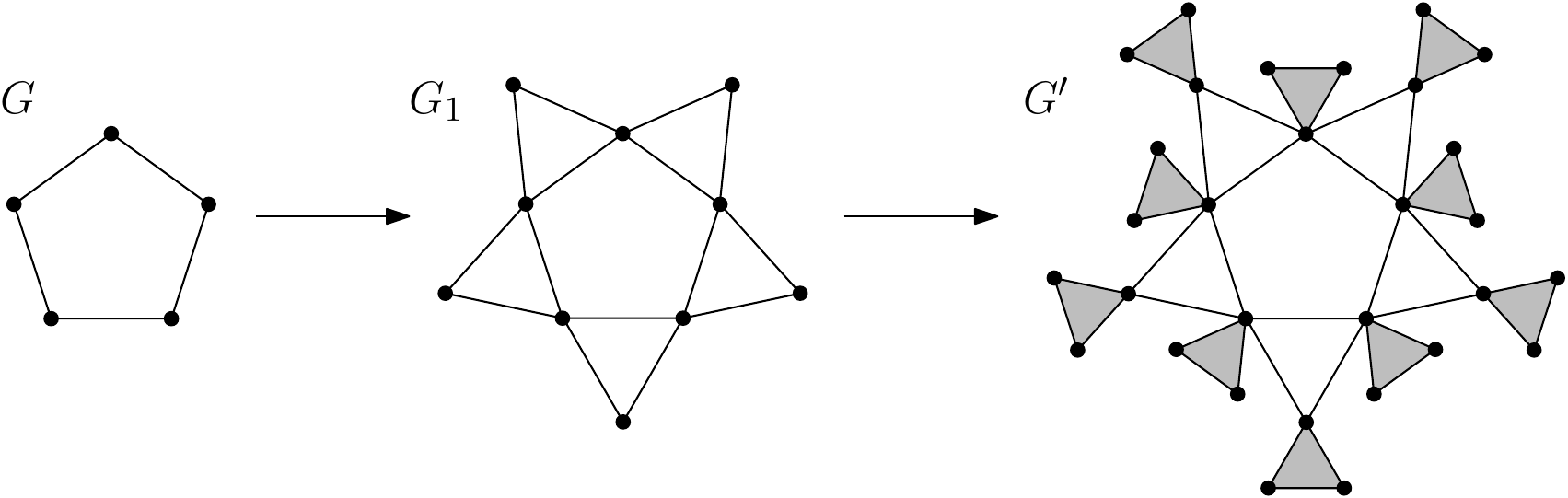}
  \end{center}
\caption{An example of the transformation $G\to G'$. The simplicial cliques  of $G'$ (which partition $V(G')$) are shaded grey.} \label{fig:hardness}
\end{figure}

Since there is a set of $|V(G_1)|$ simplicial cliques partitioning $V(G')$, the graph $G'$ is localizable.
Since $G$ is triangle-free, every maximal clique of $G'$ is of size $3$, therefore $G'$ is co-well-covered.
To complete the proof, we show that $G$ is $3$-colorable if and only if the complement of $G'$ is localizable.
Suppose that $G$ is $3$-colorable, and let $c$ be a $3$-coloring of $G$.
Extend $c$ to a $3$-coloring $c'$ of $G'$. The color classes of $c'$ define a partition of $V(G')$ into three independent sets, equivalently, a partition of
the vertex set of the complement of $G'$ into three cliques, say $C_1, C_2, C_3$.
Since all maximal cliques of $G'$ are of size $3$, every color class contains a vertex
of each maximal clique of $G$, which means that each $C_i$ is a strong clique in the complement of $G'$. Therefore
the complement of $G'$ is localizable. Conversely, since $\alpha(\overline{G'}) = \omega(G') = 3$, if the complement of $G'$ is localizable, then
 it has a partition of its vertex into $3$ strong cliques, and therefore $G'$ is $3$-colorable. But then so is $G$, as an induced subgraph of $G'$.
\end{proof}

It follows from the proof of Theorem~\ref{thm:loc-co-wc-hard} that if $G$ is any triangle-free graph of chromatic number at least $4$,
then the graph $G'$ obtained from $G$ as in the above proof (cf.~Figure~\ref{fig:hardness}) is a localizable co-well-covered graph with a non-localizable complement. There are many known constructions of triangle-free graphs of high chromatic number, see, e.g.,~\cite{MR0069494,MR0051516,MR3540616}.

We would also like to point out that in~\cite{MR3356635} Zaare-Nahandi claimed that the problem of determining if a given semi-perfect graph is well-covered is a polynomially solvable task. However, a polynomial time recognition algorithm for testing if a given semi-perfect graph is well-covered would imply P = NP. This is because of Theorem~\ref{thm:wc-rec-NP-hard} (stating that the recognition of well-covered weakly chordal graphs is co-NP-complete)
and the fact that every weakly chordal graph is perfect and hence semi-perfect.\footnote{In fact, there is an error in the arguments from~\cite{MR3356635} leading to the conclusion that the problem of determining if a given semi-perfect graph is well-covered is a polynomially solvable task.
The author assumes that a semi-perfect graph $G$ is given together with a set of $k = \alpha(G)$ pairwise disjoint cliques $Q_1,\ldots, Q_k$ with
$Q_1\cup \ldots\cup Q_k = V(G)$. The author writes:\\
{\it ``$\ldots$ Therefore, checking well-coveredness of the graph $G$ is equivalent to checking that, for each $i$, $1\le i\le k$, the set of vertices of $Q_i$ is part of a minimal vertex cover of $G$. But, this is a simple task: it is enough to check that the set of vertices of $Q_i$ is a minimal vertex cover of the subgraph of $G$ induced by $N(Q_i)$, which can be done in polynomial time.''}\\
The first sentence above is correct, in the sense that $G$ is not well-covered if and only if some $Q_i$ is part of a minimal vertex cover of $G$.
It is also true that the condition that $Q_i$ is a minimal vertex cover of the subgraph of $G$ induced by $N(Q_i)$ (assuming that $N(Q_i)$ denotes the set of vertices that are either in $Q_i$ or have a neighbor in $Q_i$)
can be tested in polynomial time. However, this condition is easily seen not to be equivalent to the condition that $Q_i$ is part of a minimal vertex cover of $G$.
This latter condition is equivalent to testing if $Q_i$ is disjoint from some maximal independent set, which, in general, is an NP-complete problem~\cite{MR1344757}.}

\section{Concluding remarks}\label{sec:conclusion}

In this paper we have initiated a study of localizable graphs, which form a rich subclass of the class of well-covered graphs.
The two properties coincide in the class of perfect graphs, and more generally for graphs in which the clique cover number
equals the independence number. We identified several classes of graphs where the property is hard to recognize and gave
efficiently testable characterizations of localizable graphs within the classes of line graphs, triangle-free graphs, $C_4$-free graphs, and cubic graphs.
Based on properties of localizable graphs, we disproved a conjecture due to Zaare-Nahandi about $k$-partite well-covered graphs having all maximal cliques of size $k$.

Our work leaves open many questions related to localizable graphs. For example, is it polynomial to check whether a given planar graph is localizable? (The corresponding question for well-covered graphs was asked in 1994 by Dean and Zito~\cite{MR1264476} and seems to be open.) Testing localizability is polynomial for triangle-free graphs. What is the complexity of the problem for $K_4$-free graphs (and, more generally, for graphs of bounded clique number)? Is it polynomial to check whether a given comparability graph is localizable? (Equivalently, given a partially ordered set, is it polynomial to test whether all its maximal antichains are of the same size?) It follows from Theorem~\ref{thm:loc-co-wc-hard} that it is NP-hard to recognize graphs such that both $G$ and its complement are localizable. What is the complexity of recognizing well-covered co-well-covered graphs?

\subsection*{Acknowledgements}

The authors are grateful to Matja\v{z} Krnc for helpful discussions, to EunJung Kim and Christophe Picouleau for feedback on an earlier draft, and to Rashid Zaare-Nahandi for providing them with a copy of paper~\cite{MR3356635}.
The work for this paper was done in the framework of a bilateral project between France and Slovenia, financed partially by the Slovenian Research Agency (BI-FR/$15$--$16$--PROTEUS--$003$). The work of A.~H.~is supported in part by the Slovenian Research Agency (research program P1-0285 and research projects N1-0032, N1-0038, J1-7051). The work of M.~M.~is supported in part by the Slovenian Research Agency (I$0$-$0035$, research program P$1$-$0285$, research projects N$1$-$0032$, J$1$-$5433$, J$1$-$6720$, J$1$-$6743$, and J$1$-$7051$).

\end{document}